\newtheorem{thm}{Theorem}[section]
\newtheorem*{thmnn}{Theorem}
\newtheorem{lem}[thm]{Lemma}
\newtheorem{cor}[thm]{Corollary}
\newtheorem{prop}[thm]{Proposition}
\newtheorem{defn}[thm]{Definition}
\newtheorem{question}[thm]{Question}
\newtheorem{conj}[thm]{Conjecture}
\numberwithin{equation}{section}
\numberwithin{figure}{section}
\newcommand{\lnk}{\ell k}
\newcommand{\Id}{\operatorname{Id}}
\newcommand{\Span}{\operatorname{span}}
\newcommand{\Int}{\displaystyle\int}
\newcommand{\Lim}{\displaystyle\lim}
\newcommand{\Z}{\mathbb{Z}}
\newcommand{\T}{\mathbb{T}}
\newcommand{\N}{\mathbb{N}}
\newcommand{\C}{\mathbb{C}}
\newcommand{\Q}{\mathbb{Q}}
\newcommand{\K}{\mathbb{K}}
\newcommand{\R}{\mathbb{R}}
\newcommand{\Bl}{\mathcal{B}\ell}
\newcommand{\A}{\mathcal{A}}
\newcommand{\AC}{\mathcal{AC}}
\newcommand{\FF}{\mathcal{F}}
\newcommand{\CC}{\mathcal{C}}
\def\p{\partial}
\def\bdry{\partial}
\newcommand{\Gl}{\operatorname{Gl}}
\newcommand{\rank}{\operatorname{rank}}
\newcommand{\im}{\operatorname{im}}
\newcommand{\into}{\hookrightarrow}
\newcommand{\ot}{\leftarrow}
\newcommand{\Sum}{\displaystyle\sum}
\title[Derivatives of slice knots]{Cut open null-bordisms and derivatives of slice knots}
\author{Tim Cochran$^{\dag}$, Christopher William Davis	}
\thanks{$^{\dag}$The first author was partially supported by National Science Foundation grant DMS-1309081 and by a grant from the Simons Foundation (304603).}
\begin{document}

\begin{abstract}
In the 60's Levine proved that if $R$ is a slice knot, then on any genus $g$ Seifert surface for $R$ there is a  $g$ component link $J$, called a derivative of $R$, on which the Seifert form vanishes.  Many subsequent obstructions to $R$ being slice are given in terms of slice obstructions of $J$.  Many of these obstructions can be derived from a 4-manifold called a null-bordism.  Recently the authors proved that that it is possible for $R$ to be slice without $J$ being slice, disproving a conjecture of Kauffmann from the 80's.  In this paper we cut open these null-bordisms in order to derive new obstructions to being the derivative of a slice knot.  As a proof of the strength of this approach we re-derive a signature condition due to Daryl Cooper.  Our results also apply to doubling operators, giving new evidence for their weak injectivity.  We close with a new sufficient condition for a genus 1 algebraically slice knot to be $1.5$-solvable.
\end{abstract}

\maketitle

\section{Introduction}\label{sec:Introduction}
 A \textbf{knot} $K$ is an isotopy class of smooth embeddings of an oriented $S^1$ into $S^3$.  In \cite{FoMi} Fox and Milnor asked which knots might bound smoothly embedded disks in the 4-ball.  Such knots are called \textbf{slice}.  The question of what knots are slice has been been the subject of intense study ever since.  
 
 While not every knot is a slice knot, every knot is the boundary of a compact oriented embedded surface in $S^3$ called a \textbf{Seifert surface}. In the late 1960's, Jerome Levine began a program aimed towards deciding if a given knot is slice by studying one of its Seifert surfaces. This program succeeded for higher-dimensional knots ($S^{2n-1}\hookrightarrow S^{2n+1}, n>1$)~\cite{L5}. Recently the authors showed that for $n=1$  Levine's philosophy of focussing on the  Seifert surface has unexpected flaws ~\cite{CD1}. The purpose of the present paper is to quantify the extent of these flaws and to suggest a modification of this philosophy.
 
These questions have usually been studied in a more algebraic context since there is an abelian group whose identity element is the set of slice knots. Specifically, recall that  $K_0\hookrightarrow S^3\times\{0\}$ is \textbf{concordant} to $K_1\hookrightarrow S^3\times\{1\}$ if there exists a properly, smoothly embedded annulus in $S^3\times [0,1]$ that restricts on its boundary to the given knots. The set of concordance classes, $\CC$, is an abelian group under the operation of connected sum.  The identity in this group is the equivalence class of the trivial knot.  Inverses are given by the reverse of the mirror image.  A  filtration of $\mathcal{C}$, called the \textbf{$n$-solvable filtration}, has been considered \cite{COT}\cite[p.1423]{CHL3}\cite{CHL6}:
$$
\cdots \subseteq \mathcal{F}_{n+1} \subseteq \mathcal{F}_{n.5}\subseteq\mathcal{F}_{n}\subseteq\cdots \subseteq
\mathcal{F}_1\subseteq \mathcal{F}_{0.5} \subseteq \mathcal{F}_{0} \subseteq \mathcal{C}.
$$
 This filtration has provided a convenient framework for many recent advances in the study of knot concordance. 

We now discuss Levine's strategy in more detail. Levine considered the \textbf{Seifert form}, a bilinear form $\beta_F:H_1(F)\times H_1(F)\to \Z$ given by $\beta_F([x],[y])= \lnk(x,y^+)$ where $x,y$ are oriented simple closed curves on $F$, $y^+$ denotes the result of pushing $y$ off of $F$ in the positive normal direction, and $\lnk$ denotes the linking number. He proved that if $K$ is a slice knot with slice disk $\Delta$ then for any Seifert surface $F$, the  Seifert form is \textbf{metabolic}, meaning that there exists  half-rank summand $\Z^g\subseteq H_1(F)$ on which $\beta_F$ is identically zero  ~\cite[Lemma 2]{L5}. This is equivalent to saying that if $K$ is a slice knot then for any genus $g$ Seifert surface $F$ there exists a nonseperating $g$-component link $J=\{d_1,...,d_g\}$ on $F$, so that $\lnk(d_i,d_j^+)=0$ for all $i,j$. We will call this a \textbf{derivative of $K$ associated to $\Delta$}~\cite{CHL7}.

Any knot whose Seifert form is metabolic is called an \textbf{algebraically slice knot}. In higher dimensions Levine proved that an algebraically slice knot is a slice knot. In the classical dimension there are additional obstructions to $K$ being a slice knot ~\cite{CG1,CG2}. Significantly, these obstructions can also be expressed in terms of (the Seifert form of !) a derivative link. Motivation for studying derivatives was also given by the following elementary well-known result.    

\begin{prop}\label{prop:easyfact} A derivative, $J$,  of $K$ is a slice link $\Longrightarrow$ $K$ is a slice knot via a slice disk $\Delta$ to which $J$ is associated.
\end{prop}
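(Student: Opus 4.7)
The plan is to produce the slice disk $\Delta$ by performing ambient surgery on a pushed-in copy of $F$ inside $B^4$, using the hypothesized slice disks for the components of $J$. First I would push $F$ into the interior of $B^4$, keeping $\partial F = K$ fixed on $\partial B^4 = S^3$, to obtain a properly embedded genus-$g$ surface $F' \subset B^4$ still containing $J = d_1 \cup \cdots \cup d_g$ in a collar of its boundary. Since $J$ is a slice link, its components bound disjointly embedded disks $\Delta_1, \dots, \Delta_g$ in $B^4$ with $\partial \Delta_i = d_i$.

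The first substantive step is to arrange that the interiors of the $\Delta_i$ are disjoint from $F'$. After a small perturbation the $\Delta_i$ meet $F'$ transversely, so $\Delta_i \cap \mathrm{int}(F')$ is a finite collection of points. Because $F'$ is connected and has nonempty boundary, each intersection point can be joined to $\partial F' = K$ by an arc in $F'$, and the collection of arcs can be chosen mutually disjoint and disjoint from the $d_j$. Tubing each $\Delta_i$ along a thin neighborhood of such an arc pushes the intersection point into $S^3$ and off, removing it without introducing any new intersections among the $\Delta_j$ (this is the main technical obstacle, and it is handled by keeping the tubes thin and close to $F'$). The linking hypothesis $\lnk(d_i, d_i^+) = 0$ simultaneously ensures that the framing of $d_i$ inherited from $F'$ is the $0$-framing in $S^3$; any discrepancy between this and the framing $\Delta_i$ induces on $\partial \Delta_i$ is removed by boundary twisting, while $\lnk(d_i, d_j^+) = 0$ for $i\neq j$ ensures the tubings and framings are compatible across all components simultaneously.

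Now I would perform ambient surgery on $F'$ along the $d_i$: for each $i$, excise a bicollar $d_i \times (-\epsilon,\epsilon) \subset F'$ and glue in two parallel copies of $\Delta_i$ joined by an annulus inside a tubular neighborhood of $\Delta_i$. Because $\{[d_1],\dots,[d_g]\}$ generates a half-rank direct summand of $H_1(F)$, this surgery reduces the genus of $F'$ to zero, producing a planar properly embedded surface whose closed sphere components can be discarded. The remaining component bounded by $K$ is a disk $\Delta \subset B^4$, so $K$ is slice.

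Finally I would verify that $J$ is the derivative of $K$ associated to this particular $\Delta$. By construction, in the exterior $B^4 \setminus \Delta$ each $d_i$ cobounds with a small parallel pushoff of $\Delta_i$ that lies off of $\Delta$; hence each $[d_i]$ lies in the kernel of the map $H_1(F) \to H_1(B^4 \setminus \Delta)$, which is exactly the algebraic condition that makes $J$ the derivative of $K$ coming from $\Delta$ via Levine's construction.
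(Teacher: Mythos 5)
Your proposal follows the same approach as the paper's one-sentence argument (ambient surgery on a pushed-in Seifert surface using two copies of the slice disks for $J$), but two of the steps need repair. The surgery description is wrong as written: two parallel copies of $\Delta_i$ ``joined by an annulus'' is itself topologically an annulus, and replacing the excised bicollar (also an annulus) by another annulus leaves the Euler characteristic, and hence the genus, unchanged. What is actually glued in is a pair of \emph{disjoint} parallel copies $\Delta_i^{+}, \Delta_i^{-}$, one attached to each of $d_i^{+}$ and $d_i^{-}$; this raises $\chi$ by $2$ per curve, and the nonseparating hypothesis on $J$ then gives a connected genus-zero surface with boundary $K$, i.e.\ a disk. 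Your subsequent genus count presumes two disks were glued in, so this is likely a slip of phrasing, but as stated the move would fail. (Also, no boundary twisting is needed: the slice disk framing on $d_i$ is automatically the $0$-framing, which agrees with the $F$-framing precisely because $\lnk(d_i,d_i^{+})=0$; and $\lnk(d_i,d_j^{+})=0$ for $i\neq j$ plays no role in the surgery itself, only in the existence of $J$.)

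The intersection-removal step is both shakier than you admit and unnecessary. The assertion that new intersections are avoided by ``keeping the tubes thin and close to $F'$'' is not an argument --- close to $F'$ is exactly where intersections are created; one would need each tube to live on a fixed side of $F'$, escape around the edge $K=\partial F'$, and be checked disjoint from $\Delta_i$ itself and the other $\Delta_j$, none of which is addressed. The standard setup sidesteps all of this: push $F$ into a collar $S^3\times[0,\tfrac12]\subset B^4$ so that the bicollars of the $d_i$ lie on the inner sphere $S^3\times\{\tfrac12\}$ and the rest of $F$ lies at levels strictly below $\tfrac12$; then take the slice disks for $J$ and their parallel pushoffs inside the inner ball $B^4_{1/2}$. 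They meet the pushed-in surface only along the $d_i^{\pm}$, so there are no interior intersections to remove. Your final check that the $d_i$ bound in $B^4\setminus N(\Delta)$ (hence die in $\A(B^4\setminus N(\Delta))$, so $J$ is associated to $\Delta$) is fine.
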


This follows from using two copies of the disjoint slice disks for the components of $J$ to perform ambient surgery on $F$, altering it to a slice disk for $K$.  What it means for a derivative to be ``associated'' is an algebraic consequence of this construction.  See Definition~\ref{def:assocLagrangian}. Thus,  hope remained that Levine's strategy was sound. In fact a converse of Proposition~\ref{prop:easyfact} was conjectured. 
 
\begin{conj}\label{conj:Kauffman} ~(c.f. \cite[p.226]{OnKnots})

\noindent  $K$ is a slice knot via a slice disk $\Delta$ to which $J$ is associated $\Longrightarrow$ $J$ is a slice link (hence $\Longleftrightarrow$) .
\end{conj}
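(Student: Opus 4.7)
The plan is to try to reverse the construction sketched after Proposition~\ref{prop:easyfact}: instead of using slice disks for $J$ to surger $F$ into a slice disk for $K$, I would try to extract slice disks for the components of $J$ from the given slice disk $\Delta$.

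First, I would set up the natural 4-manifold. Push $F$ slightly into $B^4$ so that $\Sigma = F \cup \Delta$ is a closed, oriented, genus $g$ surface in $B^4$ containing the derivative curves $d_1,\dots,d_g$ as a half-basis of $H_1(\Sigma)$. Let $V = B^4 \setminus \nu(\Delta)$; its boundary is the zero-surgery manifold $M_K$, inside which $F$ caps off to a closed surface $\hat F$. The hypothesis that $J$ is associated to $\Delta$ (Definition~\ref{def:assocLagrangian}) should translate to the identity $\Span\{[d_1],\dots,[d_g]\} = \Ker\bigl(H_1(F) \to H_1(V)\bigr)$. In particular, each $d_i$ is nullhomologous in $V$.

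Second, I would promote homological nullity to geometric nullity. Since $V$ is simply connected, each $d_i$ bounds a generically immersed surface $E_i$ in $V$. Using the vanishing of the Seifert form on $J$, namely $\lnk(d_i,d_j^+) = 0$, the algebraic self- and mutual-intersection numbers of the $E_i$ should be arranged to vanish, and via standard manipulations (interior tubing, surgery on handles of $E_i$ when dual spheres are available, finger moves to pair up intersections) one should obtain a system of immersed disks with $\partial E_i = d_i$ whose remaining double points are organized into Whitney pairs.

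The main obstacle is converting this immersed configuration into disjoint, smoothly embedded disks in $B^4$. In higher dimensions the relevant Whitney moves cancel intersection pairs automatically, which is precisely why Levine's analogous theorem holds for $S^{2n-1} \hookrightarrow S^{2n+1}$ with $n>1$; in dimension four, this is the smooth disk embedding problem and has no general solution. To finish the argument I would need extra geometric input—geometrically dual spheres to clear away intersections, or a fundamental group in the complement tame enough to invoke Freedman's topological disk embedding theorem—and even then the upgrade from topological to smooth disks is non-trivial. I expect this final Whitney-trick step to be the crux, and it is in fact here that any purely algebraic strategy must fail, in view of the authors' disproof of the conjecture in \cite{CD1}.
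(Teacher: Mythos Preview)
The statement you are attempting to prove is a \emph{conjecture}, and the paper does not contain a proof of it. On the contrary, the paper explicitly states (immediately after Conjecture~\ref{conj:Kauffmannsolv}) that ``In \cite{CD1} The authors exhibit a counterexample to Conjecture~\ref{conj:Kauffman}.'' So there is no paper proof to compare against, and any purported proof must contain a genuine error. You do recognize this at the end of your write-up, but the proposal should not be framed as a proof attempt at all.

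Beyond this structural issue, there is a concrete mistake in your second paragraph: you assert that $V = B^4 \setminus \nu(\Delta)$ is simply connected. This is false. The abelianization map $\pi_1(V) \to H_1(V) \cong \Z$ is surjective (generated by a meridian of $\Delta$), so $V$ is never simply connected. Indeed the paper makes essential use of the fact that $\pi_1(V)$, and in particular its metabelian quotient $\pi_1(V)/\pi_1(V)^{(2)}_r$, is nontrivial (see Lemma~\ref{lem:Vfacts} and Definition~\ref{rem:canonicalmap}). Consequently the step ``each $d_i$ bounds a generically immersed surface $E_i$ in $V$'' does not follow from simple connectivity, and the rest of the Whitney-trick outline is built on a false premise. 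The true obstruction is not merely a failure of the Whitney trick in dimension four; the counterexamples in \cite{CD1} show that the derivatives can fail even to be \emph{algebraically} slice, so no amount of geometric maneuvering in $V$ can succeed.
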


Proposition~\ref{prop:easyfact} and Conjecture~\ref{conj:Kauffman} translate over to the setting of the solvable filtration.  

\begin{prop}\label{prop:doublingraisessolvability}~\cite[Cochran-Orr-Teichner,  Theorem 8.9]{COT} A derivative, $J\in\mathcal{F}_{n}$  $\Longrightarrow$ $K\in \mathcal{F}_{n+1}$  (via an $(n+1)$-solution to which $J$ is associated). Here $n$ is a non-negative integer or half-integer.
\end{prop}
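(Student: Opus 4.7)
The plan is to construct an $(n+1)$-solution $W$ for $K$ out of the given $n$-solution $V$ for $J$, using the Seifert surface $F$ as a template, following the general strategy of \cite[Theorem~8.9]{COT}. Push $F$ into $B^4$ and consider the closed surface $\hat F \subset B_K := B^4 \cup_K h_K$ (where $h_K$ is a $0$-framed $2$-handle) obtained by capping off $\partial F = K$ with the $2$-handle core, so that $\partial B_K = M_K$. Since the Seifert form vanishes on $\{d_i\}$, the framings of the $d_i$'s inherited from $\hat F$ agree with the natural $0$-framings, and a neighborhood of the tori $d_i \times S^1$ inside a collar $\hat F \times S^1 \subset B_K$ can be cut out and replaced by $V$ along $M_J$. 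The result is a $4$-manifold $W$ with $\partial W = M_K$ that contains $V$ and retains the Seifert surface structure of $F$.

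Verify $(n+1)$-solvability in three steps. First, Mayer-Vietoris yields $H_1(W) = \Z$ with $H_1(M_K) \to H_1(W)$ an isomorphism: the $g$ generators $\{\mu_{d_i}\}$ of $H_1(V) = \Z^g$ are killed by the meridional disks of the $2$-handles created in the replacement, leaving only $\mu_K$. Second, a further Mayer-Vietoris computation identifies the intersection form on $H_2(W)$, modulo its radical $\langle[\hat F]\rangle$ coming from $H_2(\partial W)$, with the symplectic form on $H_2(V)$: the Lagrangian $\{L_i^V\}$ and dual classes $\{D_j^V\}$ supplied by $V$'s $n$-solution structure form, after gluing, a symplectic basis of $H_2(W)$ modulo radical. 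The crucial third step is the derived-series condition. Since $H_1(V) = \Z^g$ is generated by the $\mu_{d_i}$'s, all of which are null-homologous in $W$, the inclusion $\pi_1(V) \to \pi_1(W)$ factors through a map with vanishing abelianization, hence has image in $\pi_1(W)^{(1)}$. Iterating this containment along the derived series yields
\[
\pi_1(V)^{(n)} \;\subseteq\; \pi_1(W)^{(n+1)},
\]
so the surfaces in $V$ representing $\{L_i^V\}$ and $\{D_j^V\}$, which by hypothesis satisfy $\pi_1 \subseteq \pi_1(V)^{(n)}$, automatically satisfy $\pi_1 \subseteq \pi_1(W)^{(n+1)}$ when viewed in $W$. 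For half-integer $n$ the same argument shifts each derived-series index by the appropriate amount. Finally, $J$ is associated to the $(n+1)$-solution $W$ in the sense of Definition~\ref{def:assocLagrangian} because $F$ persists as a Seifert surface in $W$ with each $d_i$ bounding the explicit disk provided by the cut-and-paste construction.

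The principal technical obstacle is the precise cut-and-paste construction of $W$: one must specify exactly how $V$ glues in so that $\partial W = M_K$ (and not, say, $M_K \# g(S^1 \times S^2)$ from stray surgeries), verify that framings agree (where the Seifert-form vanishing on $\{d_i\}$ is essential), and confirm that $V$'s symplectic basis transports intact and pairs correctly with the residual Seifert data. Once these details are in place, the homological Mayer-Vietoris computations and the derived-series induction summarized above are routine, and the key phenomenon—namely the jump from $\pi_1(V)^{(n)}$ to $\pi_1(W)^{(n+1)}$ caused by the killing of the $\mu_{d_i}$'s in $H_1(W)$—is what makes the solvability level increase by one.
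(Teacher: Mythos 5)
Your overall plan is the right one (glue the $n$-solution $V$ for $J$ into a cobordism whose other end is $M_K$), and the derived-series step you isolate is indeed the crux: since $H_1(V) \cong H_1(M_J) \cong \Z^g$ is generated by the meridians $\mu_{d_i}$, which die in $H_1(W) \cong \Z\langle\mu_K\rangle$, inclusion carries $\pi_1(V)$ into $\pi_1(W)^{(1)}$ and hence $\pi_1(V)^{(n)}$ into $\pi_1(W)^{(n+1)}$, so the Lagrangians and duals from $V$ automatically satisfy the $(n+1)$-solvability condition. That part is correct and is what the paper (and \cite{COT}) relies on.

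The gap is in the construction of $W$ itself, and it is not cosmetic. As written, you propose cutting out ``a neighborhood of the tori $d_i\times S^1$ inside a collar $\hat F\times S^1\subset B_K$'' and gluing in $V$ along $M_J$. But a tubular neighborhood of each torus $d_i\times S^1\subset B_K$ is a copy of $T^2\times D^2$, so removing $g$ of them produces $g$ boundary components diffeomorphic to $T^3$, not the connected $3$-manifold $M_J$ (which is $0$-surgery on the $g$-component link $J$). There is therefore no well-defined gluing of $V$ along $M_J$ into the complement, and the claims about the resulting $H_1$ and $H_2$ (``meridional disks of the $2$-handles created in the replacement,'' persistence of $\hat F$, etc.) have no ground to stand on. You flag this yourself as ``the principal technical obstacle,'' but this is not a detail to be deferred: without a correct identification of where $M_J$ appears, the proof does not exist.

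The construction the paper (following \cite{COT} and \cite{CHL7}) actually uses is different and resolves all of this cleanly: form the \emph{fundamental cobordism} $E$ by starting with $M_K\times[0,1]$, attaching $0$-framed $2$-handles to $M_K\times\{1\}$ along the components of $J$ (the vanishing of the Seifert form on the $d_i$'s is exactly what makes the $0$-framing agree with the Seifert framing, so that the new boundary is $M_J\#(S^1\times S^2)$ with $K$ now bounding an embedded disk), and then attaching a $3$-handle along the nonseparating $S^2$. This $E$ is a cobordism from $M_K$ to $M_J$, with $H_1(M_K)\xrightarrow{\sim}H_1(E)$ and $H_1(M_J)\to H_1(E)$ the zero map (Lemma~\ref{lem:Efacts}). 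One then sets $Y=E\cup_{M_J}V$; a Mayer--Vietoris computation shows the Lagrangians and duals of $V$ give a basis for $H_2(Y)/H_2(\partial Y)$, and your derived-series argument finishes the job, for both integer and half-integer $n$. You should replace the push-in/cut-and-paste construction with $E\cup_{M_J}V$; the rest of your reasoning then goes through essentially unchanged.
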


\begin{conj}\label{conj:Kauffmannsolv} 
\noindent  $K\in \mathcal{F}_{n+1}$ (via an $(n+1)$-solution to which $J$ is associated) and $\Delta_K(t)\neq 1$  $\Longrightarrow$ $J\in \mathcal{F}_{n}$.
\end{conj}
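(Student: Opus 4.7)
The plan is to construct an $n$-solution for the derivative $J$ by cutting open the given $(n+1)$-solution $W$ for $K$ along an embedded closed surface containing the Seifert surface $F$ on which $J$ lives. Since $J$ is associated to the $(n+1)$-solution (in the sense of Definition~\ref{def:assocLagrangian}), the slice data restricts on $\partial W = M_K$ to give $F$, and after pushing $F$ slightly into $W$ and capping off $K$ using the slice structure in the 4-manifold, one obtains a closed genus $g$ surface $\hat F \subset W$. Cutting $W$ along a regular neighborhood $\hat F \times [-1,1]$ should yield a 4-manifold $V$ whose boundary is (after a round of surgery) the zero-framed surgery $M_J$ on $J$. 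The first main step is to make this construction geometrically clean and to verify that $H_1(M_J) \to H_1(V)$ is an isomorphism; here the hypothesis $\Delta_K(t) \neq 1$ enters crucially, since it guarantees that the Alexander module is large enough to absorb the $H_1$-contribution of $\hat F$ after cutting, so no extra first homology is created.

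The heart of the proof is to verify that $V$ inherits an $n$-Lagrangian with $n$-duals in $H_2$, given that $W$ carries an $(n+1)$-Lagrangian with $(n+1)$-duals. The key observation is that the inclusion $V \hookrightarrow W$ induces a map $\pi_1(V) \to \pi_1(W)$ which, on the level of $\hat F$-dual circles, factors through the commutator subgroup of $\pi_1(W)$. Consequently, lifts of Lagrangian surfaces to the $(n+1)$-st derived cover of $\pi_1(W)$ should translate to lifts of the corresponding surfaces in $V$ to the $n$-th derived cover of $\pi_1(V)$, shifted exactly one derived level lower. A Mayer--Vietoris argument for the decomposition $W = V \cup N(\hat F)$ would then produce candidate Lagrangian and dual classes in $H_2(V)$ from those in $H_2(W)$, of the right rank (accounting for the rank-$2g$ contribution from $\hat F$ that is removed in the cut manifold).

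The main obstacle I anticipate is controlling the geometric intersection of the $(n+1)$-Lagrangian and dual surfaces of $W$ with $\hat F$. Generically these surfaces will meet $\hat F$ nontrivially, and after cutting they become surfaces with boundary in $V$, which cannot directly serve as closed Lagrangian surfaces. One would need to tube them off $\hat F$ inside $W$, or do a controlled surgery to eliminate the intersections, and such modifications must be compatible with the derived-series lifting condition on the corresponding covers. It is plausible that this can only be achieved under the $\Delta_K(t)\neq 1$ hypothesis and under additional algebraic restrictions on the associativity data --- an expectation consistent with the authors' disproof of the unrefined Conjecture~\ref{conj:Kauffman} in \cite{CD1}, which suggests that no naive version of such a cutting argument can succeed without essential use of the polynomial condition.
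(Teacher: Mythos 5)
This is a \emph{conjecture}, not a theorem, and the paper never proves it --- on the contrary, the paper explicitly states that it is \textbf{false}. In the paragraph immediately following Conjecture~\ref{conj:Kauffmannsolv}, the authors write: ``In \cite{CD1} The authors exhibit a counterexample to Conjecture \ref{conj:Kauffman} which simultaneously disproves Conjecture~\ref{conj:Kauffmannsolv}.'' The counterexample there is a genus one \emph{slice} knot $K$ with $\Delta_K(t)=(mt-(m+1))((m+1)t-m)\neq 1$ whose associated derivative $J$ is not even algebraically slice, i.e.\ $J\notin\FF_{0.5}$. So $K\in\FF_{n+1}$ for every $n$, $\Delta_K\neq 1$, yet $J\notin\FF_n$ for any $n\geq 0.5$. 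Any attempted proof must therefore contain an error.

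Your proposal also departs from the construction the paper actually uses. The paper does not cut the $(n+1)$-solution $W$ along a closed surface built from the Seifert surface $F$. Instead it first passes from $W$ to a \emph{null-bordism} $V$ with $\partial V = M_J$ (Lemma~\ref{lem:Vfacts}, Proposition~\ref{prop:NullBordismDeriv}), whose defining feature is that $H_1(M_J)\to H_1(V)$ is the \emph{zero} map --- not an isomorphism --- and then cuts $V$ along a three-dimensional submanifold $\Sigma$ dual to a generator of $H_1(V)\cong\Z$ (Lemma~\ref{Lem:Sigma}, Definition~\ref{defn: cut open}). The result is a cobordism with \emph{three} boundary components, $M_J\sqcup\Sigma^+\sqcup\Sigma^-$, which is precisely why it cannot serve as an $n$-solution for $J$: an $n$-solution must be bounded by $M_J$ alone. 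The correct conclusion the paper draws (see the discussion after Theorem~\ref{thm:sampleMainTheorem}) is not that $J$ is $n$-solvable but that ``$M_J$ is cobordant via $W$ to $\Sigma^+\sqcup\Sigma^-$,'' a strictly weaker relation. Your step asserting that $\Delta_K(t)\neq 1$ forces $H_1(M_J)\to H_1(V)$ to be an isomorphism is exactly where the argument breaks down: the non-vanishing Alexander polynomial guarantees only non-degeneracy of the null-bordism in the sense of Definition~\ref{defn:NullBordism}, not injectivity on $H_1$, and in fact property (3) of Lemma~\ref{lem:Vfacts} says the opposite.
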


In \cite{CD1} The authors exhibit a counterexample to Conjecture \ref{conj:Kauffman} which simultaneously disproves Conjecture~\ref{conj:Kauffmannsolv}. The main goal of this paper is to establish a positive result that points to a correct version of these conjectures.  Instead of beginning with our most technical results we give easier to understand implications when $n=0.5$.  Recall that a knot is $0.5$-solvable if and only if it is algebraically slice.  

\subsection{Derivatives of $1.5$-solvable genus $1$ knots.}

The counterexample of \cite{CD1} consists of a genus one \textit{slice} knot $K$ where $\Delta_K(t)= (mt-(m+1))((m+1)t-m)\neq 1$ (with $m=1$)  and where the derivative $J$ associated to the slice disk was \textit{not} algebraically slice, but rather was a sum of two cabled knots $S_{(m,1)}\#-S_{(m+1,1)}$ (See Figure~\ref{fig:cable}) and hence whose algebraic concordance type satisfies
$$
[J]=[S_{(m,1)}]-[S_{(m+1,1)}].
$$
The results presented in this subsection suggest that this is not an accident, rather, we conjecture that genus one algebraically slice knots admitting such a derivative are precisely those which are $1.5$-solvable.

\begin{conj}\label{conj:genus1} Suppose that $K$ admits a genus one Seifert surface and a derivative $J$ (consequently $J$ is a knot and $\Delta_K(t)\doteq (mt-(m+1)((m+1)t-m)$.) Suppose $m\notin{0,-1}$.  

The algebraic concordance type of $J$ satisfies
$
[J]=[S_{(m,1)}]-[S_{(m+1,1)}]
$
%in $\mathcal{C}/\mathcal{\F}_{0.5}$,
 for some knot $S$, if and only if $K$ is $1.5$-solvable (via a $1.5$-solution to which $J$ is associated).
\end{conj}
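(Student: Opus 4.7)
The strategy is to attack the two implications of this biconditional with rather different machinery, with the sufficient direction likely being the one amenable to the main construction of this paper and the necessary direction being the substantive open problem.

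\smallskip
\noindent \emph{Sufficient direction $(\Leftarrow)$.} Suppose $[J] = [S_{(m,1)}] - [S_{(m+1,1)}]$. The plan is to construct a $1.5$-solution for $K$ to which $J$ is associated, generalizing the explicit construction from \cite{CD1}. The key observation is that this algebraic concordance identity says the Seifert form of $J \# S_{(m+1,1)} \# -S_{(m,1)}$ is metabolic. First, I would attach $2$-handles to $B^4$ along a Seifert-framed basis of $H_1(F)$ for the genus one Seifert surface $F$, producing a $4$-manifold containing a closed genus one surface $\hat F$ capping off $F$. The two generators of $H_1(F)$ carry framings determined by the Seifert form of $K$; because $\Delta_K(t) \doteq (mt-(m+1))((m+1)t-m)$, these framings are exactly compatible with the $(m,1)$- and $(m+1,1)$-cabling constructions. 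Using the standard cable cobordism from $S_{(p,1)}$ to $S$ (for $p=m$ and $p=m+1$), together with a geometric realization of the metabolizer for $[J] + [S_{(m+1,1)}] - [S_{(m,1)}] = 0$ as half-rank surfaces in a cobordism, I would cap off $\hat F$ inside a larger $4$-manifold, and then verify that the resulting $W$ satisfies the $(1.5)$-solution conditions on $H_2(W;\Z[\pi_1(W)/\pi_1(W)^{(2)}])$.

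\smallskip
\noindent \emph{Necessary direction $(\Rightarrow)$.} Suppose $K$ has a $1.5$-solution $W$ with $J$ associated. Here I would invoke the cut-open null-bordism technology developed in the body of the paper. Cap off $F$ in $W$ along the associated structure to get a closed surface $\hat F$, then cut $W$ along a carefully chosen separating $3$-manifold dual to the curve on $F$ complementary to $J$, so the two pieces carry information about $J$ and about the ``other half'' of the Seifert form respectively. Analyzing the pairing on $H_2$ of each piece (as in our re-derivation of Cooper's signature theorem) yields a Witt-group equation in the algebraic concordance group forcing $[J]$ to equal the prescribed difference. The classes $[S_{(m,1)}]$ and $[S_{(m+1,1)}]$ appear naturally as the contributions of the two irreducible factors of $\Delta_K(t)$ acting on the rational Alexander module of $K$, with the companion $S$ recovered from the iterated linking data visible after the cut.

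\smallskip
\noindent \emph{Main obstacle.} I expect the $(\Rightarrow)$ direction to be the substantive difficulty and the reason this statement is framed as a conjecture rather than a theorem: the cut-open null-bordism approach naturally controls signatures and other $\rho$-invariants of $J$, but extracting the full Witt class of $J$ (and in particular the choice of companion $S$) requires matching this data with the algebraic concordance classes of the cable knots $S_{(p,1)}$, which is a delicate computation in the Witt group of linking forms over $\Q[t,t^{-1}]$. The $(\Leftarrow)$ direction, in contrast, should be accessible by carefully extending the explicit $1.5$-solution construction of \cite{CD1} from the specific companion used there to an arbitrary knot $S$ and to arbitrary algebraic representatives of $[J]$.
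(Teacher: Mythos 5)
You correctly identify that the statement is a conjecture precisely because the $(\Rightarrow)$ direction is open: the paper proves the $(\Leftarrow)$ direction as Theorem~\ref{thm:sufficiency} and supplies only a signature-level partial result toward $(\Rightarrow)$ (Theorem~\ref{thm:genusonesign}, which uses cut-open null-bordisms to force $\sigma_J(\omega^k)=g(\omega^{m+1})-g(\omega^m)$, rather than recovering the full Witt class of $J$). Your sketch of the $(\Rightarrow)$ strategy and your named obstruction --- upgrading $\rho$-invariant data to an equation in the algebraic concordance group, and in particular pinning down the companion $S$ --- is faithful to the paper's own discussion of why this remains conjectural.

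For $(\Leftarrow)$, however, your construction is genuinely different from the paper's and considerably vaguer. You propose building a $1.5$-solution directly by attaching $2$-handles along a Seifert-framed basis of $H_1(F)$ and then appealing to an unspecified "cable cobordism" and "geometric realization of the metabolizer"; this would require verifying the derived-series conditions on $\pi_1$ of Lagrangians and duals in a $4$-manifold built ad hoc, and you never say what that manifold is. The paper instead makes a reduction. It picks $\delta$ on $F$ dual to the derivative $\alpha$, so that $\lnk(\delta^-,\alpha)=m$ and $\lnk(\delta^+,\alpha)=m+1$, and performs the knotted satellite operation $K'=K_{\delta^+,\delta^-}(T,-T)$ (Definition~\ref{defn:genSatOp}, Proposition~\ref{prop:annulus}). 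Because $\delta^\pm$ cobound an annulus in the exterior of $K$, the knot $K'$ is homology concordant to $K$, and it carries a derivative $\alpha'=\alpha_{\delta^+,\delta^-}(T,-T)$ on a corresponding Seifert surface. By Proposition~\ref{prop:AlgConcInfect} the algebraic concordance class of $\alpha'$ is $[J]+[T_{(m+1,1)}]-[T_{(m,1)}]=0$ under your hypothesis, so $\alpha'$ is algebraically slice; Proposition~\ref{prop:raisingsolvability} (the extension of Cochran--Orr--Teichner Theorem~8.9 to knots in homology spheres) then hands over a $1.5$-solution for $K'$, and gluing on the exterior of the homology concordance transfers this to $K$ with $\alpha$ associated. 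This route entirely sidesteps a direct handle-by-handle verification: it reduces to the already-known implication that an algebraically slice derivative gives $1.5$-solvability. Your direct approach, if completed, would essentially have to re-prove that implication along the way, so the paper's reduction is the more efficient choice.
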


\begin{figure}[t]
\setlength{\unitlength}{1pt}
\begin{picture}(165,70)
\put(0,0){\includegraphics[height=70pt]{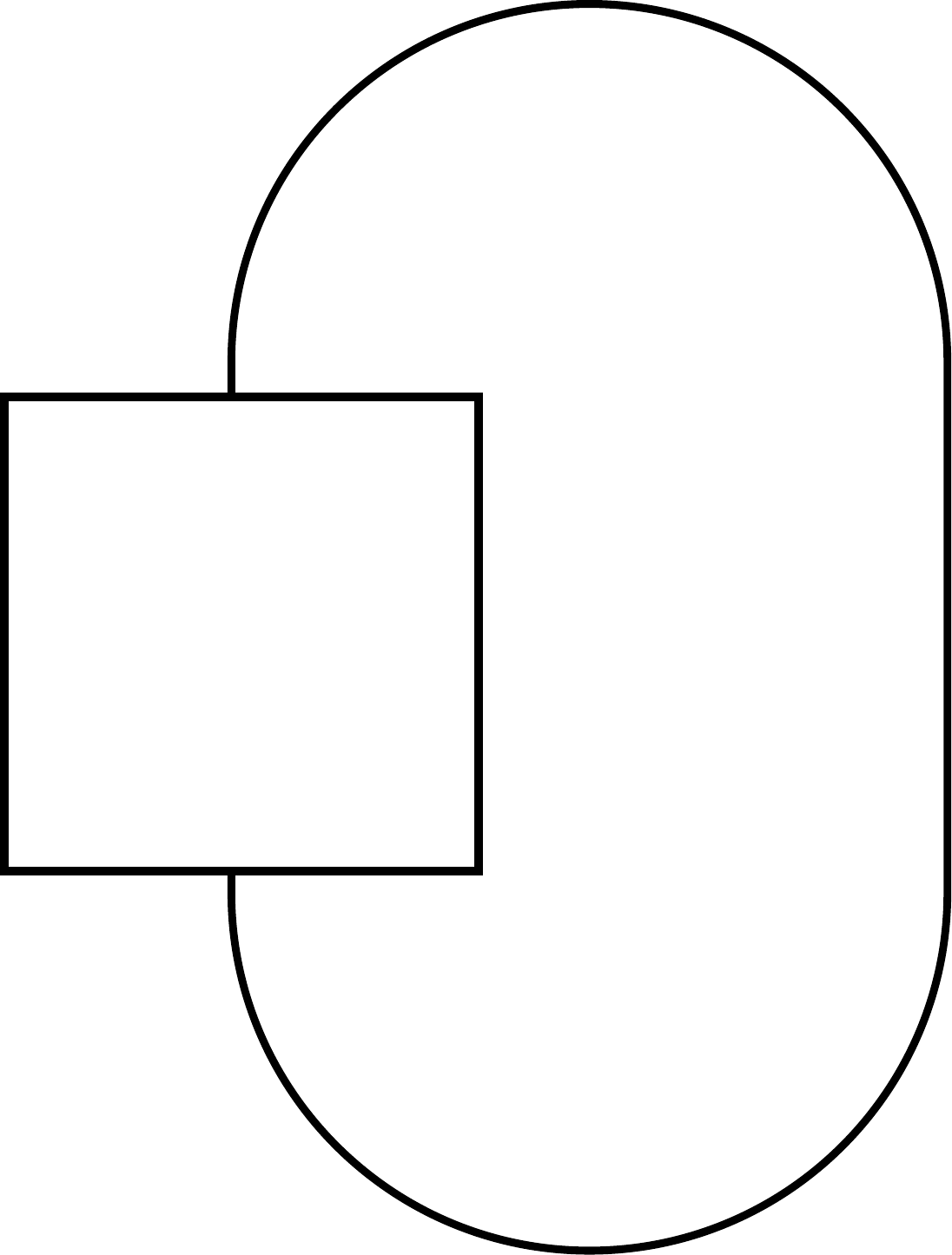}}
\put(100,0){\includegraphics[height=70pt]{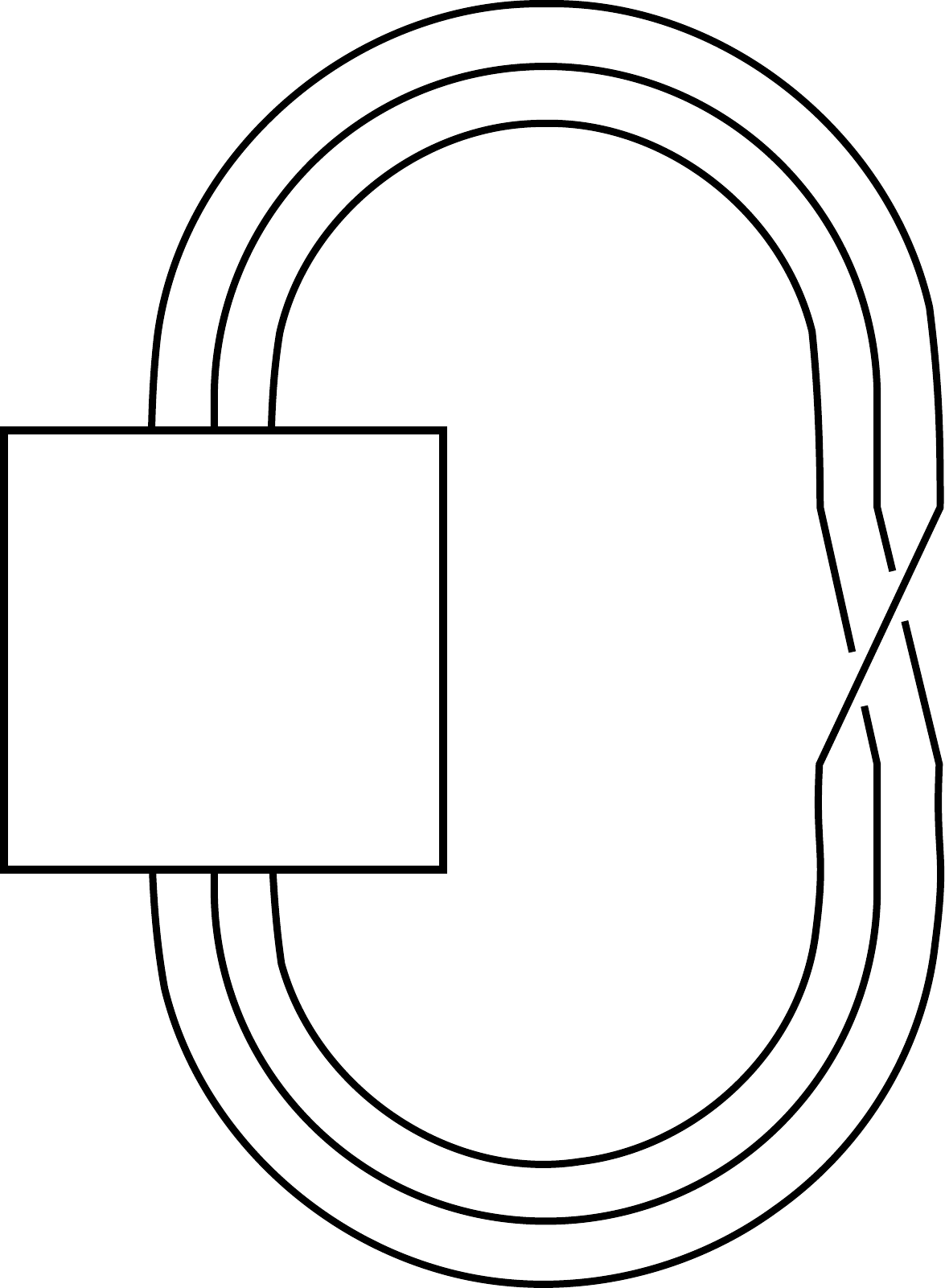}}
\put(10,30){S}
\put(110,30){S}
\end{picture}
\caption{A knot $S$ and its $(3,1)$-cable}\label{fig:cable}
\end{figure}

In Section~\ref{sect:suff} we prove the sufficiency of the condition conjectured above:

\begin{thm}\label{thm:sufficiency} Suppose that $K$ admits a genus one Seifert surface and a derivative $J$ (consequently $J$ is a knot and $\Delta_K(t)\doteq (mt-(m+1))((m+1)t-m)$. Suppose $m\notin \{0, -1\}$.  

If  the algebraic concordance type of $J$ satisfies
$
[J]=[T_{(m,1)}]-[T_{(m+1,1)}]
$
%in $\mathcal{C}/\mathcal{\F}_{0.5}$,
 for some knot $T$, then $K$ is $1.5$-solvable (via a $1.5$-solution to which $J$ is associated.)
\end{thm}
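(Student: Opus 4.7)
The plan is to reduce the theorem to the counterexample construction of \cite{CD1} by passing to a connect sum with a universal model. As a first step, for any companion knot $T$ one constructs a model knot $K_0 = K_0(T)$ carrying a genus one Seifert surface $F_0$ with Alexander polynomial $(mt-(m+1))((m+1)t-m)$ and with derivative $J_0$ which, as a knot in $S^3$, is $T_{(m,1)}\#(-T_{(m+1,1)})$. The construction is modeled on that of the counterexample in \cite{CD1} (where it was done for a specific $T$): begin with a standard genus one Seifert surface whose two derivative curves are unknots and whose Seifert matrix is $\left(\begin{smallmatrix} 0 & m+1 \\ m & 0 \end{smallmatrix}\right)$, and apply iterated satellite operations with companion $T$ to produce the desired connect-sum structure on $J_0$. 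As in \cite{CD1}, the resulting $K_0$ is $1.5$-solvable via a $1.5$-solution $W_0$ to which $J_0$ is associated.

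Next, set $K' = K\#(-K_0)$; the boundary connect-sum surface $F' = F\natural(-F_0)$ is a genus two Seifert surface for $K'$ carrying the split derivative link $J\sqcup(-J_0)$. The hypothesis yields
\[
[J\#(-J_0)] \;=\; [J] - [T_{(m,1)}] + [T_{(m+1,1)}] \;=\; 0
\]
in the algebraic concordance group, so the band-sum knot $J\#(-J_0)$ is algebraically slice, hence $0.5$-solvable. One promotes this to $0.5$-solvability of the split link $J\sqcup(-J_0)$ as follows: given a $0.5$-solution $V$ for $J\#(-J_0)$ with $\partial V = M_{J\#(-J_0)}$, attach a single $4$-dimensional $1$-handle to $V$, producing $V' = V\cup h^1$, which has $\partial V' = M_{J\#(-J_0)}\#(S^1\times S^2) = M_{J\sqcup(-J_0)}$, $H_1(V')\cong\Z\oplus\Z$, and $H_2(V')\cong H_2(V)$ together with the same intersection form and Lagrangian. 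Hence $V'$ is a $0.5$-solution for the split link.

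With the derivative link of $K'$ known to be $0.5$-solvable, apply the link version of Proposition~\ref{prop:doublingraisessolvability} (the argument of \cite[Theorem 8.9]{COT} extends to links) to produce a $1.5$-solution $W'$ for $K'$ to which $J\sqcup(-J_0)$ is associated. Since $K = K'\# K_0$, the boundary connect sum $W = W'\natural W_0$ along $4$-balls near the summand boundaries is a $1.5$-solution for $K$; tracking the metabolizer decompositions through the connect sum identifies $J$ as an associated derivative of $K$ on $F$ inside $W$, completing the proof.

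The principal technical hurdle will be step~1, namely verifying that the construction of \cite{CD1} produces a $1.5$-solvable model $K_0$ with the stated derivative for an arbitrary companion $T$, not just for the specific $T$ used to produce the counterexample. The $1$-handle promotion from $0.5$-solvability of the band-sum knot to $0.5$-solvability of the split link, as well as the propagation of the associated-derivative condition through the boundary connect sum, are routine.
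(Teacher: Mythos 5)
The approach you take is genuinely different from the paper's, and unfortunately it has a real gap at its central step. The paper never passes to genus-two surfaces or $2$-component derivative links at all: instead it uses a \emph{knotted satellite operator}. Extend $\alpha$ (your $J$) to a symplectic basis $\{\alpha,\delta\}$ on $F$, observe that $\lnk(\delta^-,\alpha)=m$ (up to the symmetry $m\leftrightarrow -m-1$), and form $K' = K_{\delta^+,\delta^-}(T,-T)$ by infecting along the two pushoffs of $\delta$ with companions $T,-T$. Because $\delta^+,\delta^-$ cobound an annulus in $S^3-K$ which can be pushed into $M\times[0,1]$, Proposition~\ref{prop:annulus} shows $K'$ is homology concordant to $K$ with a genus-one Seifert surface whose derivative is the \emph{knot} $\alpha' = \alpha_{\delta^+,\delta^-}(T,-T)$. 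Proposition~\ref{prop:AlgConcInfect} then computes $[\alpha'] = [J] + [(-T)_{(m,1)}] + [T_{(m+1,1)}] = 0$, so $\alpha'$ is algebraically slice, Proposition~\ref{prop:raisingsolvability} gives a $1.5$-solution for $K'$ with $\alpha'$ associated, and the concordance exterior is glued on to transport this to $K$. Everything stays inside the knot-theoretic framework.

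The gap in your argument is the ``$1$-handle promotion'' step. Attaching a $4$-dimensional $1$-handle to a $0.5$-solution $V$ for $J\#(-J_0)$ does give $\partial V' = M_{J\#(-J_0)}\#(S^1\times S^2)$, but this is \emph{not} diffeomorphic to $M_{J\sqcup(-J_0)} = M_J\#M_{-J_0}$ in general. For instance, take $J = -J_0 = $ trefoil: by Gabai's theorem, $0$-surgery on any nontrivial knot is irreducible, so $M_{J\#(-J_0)}\#(S^1\times S^2)$ has an $S^1\times S^2$ prime summand while $M_J\#M_{-J_0}$ has two irreducible prime summands and no $S^1\times S^2$ summand. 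The prime decompositions disagree, so the manifolds differ. Moreover, even if one tries to fix this by a different cobordism, it is far from clear that $J\#(-J_0)$ being $0.5$-solvable forces the split link $J\sqcup(-J_0)$ to be $0.5$-solvable as a link --- the natural fusion cobordism runs the wrong way (from the split link to the band sum), and it cannot simply be reversed without losing control of the $H_1$ isomorphism condition. You would need a genuinely new argument here, and I do not believe this implication holds in the stated generality. A similar issue infects the final assembly: the boundary connect sum $W'\natural W_0$ has boundary $M_{K'}\#M_{K_0} = M_{K'\sqcup K_0}$, not $M_{K'\#K_0} = M_K$; the correct construction of a solution for a connected sum from solutions of the summands (used to prove $\mathcal{F}_n$ is a subgroup) involves a handle cobordism between $M_{K'}\sqcup M_{K_0}$ and $M_{K'\#K_0}$, not a boundary connect sum. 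Your step~1, the parametrized version of the \cite{CD1} model knot, is plausible and correct in spirit, but the rest of the argument needs to be replaced by a mechanism --- such as the paper's knotted-satellite infection --- that avoids passing through split-link solvability altogether.
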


We next discuss the evidence for reverse implication.  Given any knot $J$ there is a function called the \textbf{Levine-Tristram} signature function $\sigma_J:\T\to \Z$.  ($\T$ is unit circle in the complex plane.)  After renormalizing at the discontinuities, the rule sending $J\mapsto \sigma_J$ is a  well defined homomorphism on the algebraic concordance group $\AC = \frac{\CC}{\FF_{0.5}}$.  The kernel of the map $J\mapsto \sigma_J$ is the torsion subgroup of $\AC$.  We prove the following result.

\begin{thm}\label{thm:genusonesign} Suppose that $K$ admits a genus one Seifert surface, $F$, admitting a derivative $J$ (consequently $J$ is a knot and  $\Delta_K(t)\doteq (mt-(m+1))((m+1)t-m)$). Suppose $m\notin \{0, -1\}$.  

 If $K\in \mathcal{F}_{1.5}$, in particular if $K$ is slice, and $J$ is an associated derivative, then  the Levine-Tristram signature function of $J$  satisfies that 
$$
\sigma_{J}(\omega^k) = g(\omega^{m+1})-g(\omega^m)
$$
for some $k\in \N$ and a function $g:\T\to \R$.  Moreover $g$ can be chosen to be continuous away from the roots of some polynomial where $g$ has at worst jump discontinuities.  
\end{thm}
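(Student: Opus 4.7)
The plan is to apply the cut-open null-bordism technique of the paper to a $1.5$-solution $W$ of $K$ to which $J$ is associated. Since $F$ has genus one, $H_1(F) \cong \Z^2$, and the hypothesis on $\Delta_K$ forces the Seifert form in a basis $(J,d)$ to have first row $(0, m)$ and second row $(m+1, \beta)$, where $\beta$ is the framing on the dual curve $d$. Associatedness of $J$ means that $J$ bounds a surface $\Sigma$ in $W$ whose fundamental group lies in $\pi_1(W)^{(1)}$, and more generally that the $1.5$-solution is compatible with the Lagrangian spanned by $J$ in $H_1(F)$.

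First I would cut $W$ open along a $3$-dimensional slab built from $F$ (pushed into $W$) together with the surface $\Sigma$ bounded by $J$, producing a new $4$-manifold $W'$. By the main cut-open construction of the paper, the boundary of $W'$ decomposes into the zero-surgery $M_J$ on $J$, appearing with a cabling multiplicity $k$ determined by the intersection number of the capping surface for $J$ with the dual curve $d$, together with $3$-manifold pieces assembled from the Seifert-surface complement. The first homology of these auxiliary pieces records the two cyclic quotients coming from the factors $(mt-(m+1))$ and $((m+1)t-m)$ of $\Delta_K(t)$.

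Next I would compute a twisted signature of $W'$ in two ways. On the one hand, Cochran-Orr-Teichner-style vanishing (coming from $W$ being a $1.5$-solution) forces the twisted signature of $W'$ with coefficients in the abelian character $\pi_1(W') \to \Z \to \C^*$ sending the meridian of $K$ to a generic $\omega \in \T$ to vanish. On the other hand, Novikov additivity, with no Wall correction since the character is abelian, expresses this signature as a sum of boundary contributions. The $M_J$ piece contributes $\sigma_J(\omega^k)$, where $k$ is the cabling exponent produced by the cut. The remaining boundary pieces contribute a difference of terms governed by the two linear factors of $\Delta_K$, combining to $g(\omega^{m+1})-g(\omega^m)$ for a suitable function $g: \T \to \R$. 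Equating the two computations and absorbing signs into $g$ yields the claimed identity $\sigma_J(\omega^k) = g(\omega^{m+1})-g(\omega^m)$.

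The main obstacle will be identifying $g$ precisely as a function with the stated continuity properties. I expect $g$ to emerge as a Levine-Tristram-type signature function of an auxiliary knot, link, or $3$-manifold extracted from the cut-open complement, so that continuity away from the roots of some Alexander polynomial, with at worst jump discontinuities at those roots, is a formal consequence of the classical theory. The delicate bookkeeping will lie in tracking orientations and in confirming that the $m$ versus $m+1$ distribution of the exponents arises from the decomposition of the Alexander module of $K$ into the two cyclic summands corresponding to the factors of $\Delta_K$, rather than some mixing that would spoil the functional form of the answer.
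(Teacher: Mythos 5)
Your overall strategy (cut a $4$-manifold open, compute a twisted signature two ways, read off a $\rho$-invariant identity) is the right general shape, but the proposal misses the mechanism that makes the same function $g$ appear in both terms of $g(\omega^{m+1})-g(\omega^{m})$, and it is missing the functional-analytic step that the paper actually needs.

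The decisive point in the paper's argument is that the cut-open $4$-manifold has boundary $M_J\sqcup\Sigma^{+}\sqcup\Sigma^{-}$, where $\Sigma^{+}$ and $\Sigma^{-}$ are \emph{two copies of one and the same} $3$-manifold $\Sigma$ (dual to the generator of $H_1(V)$), with opposite orientations. Both $\rho$-invariant contributions are $\rho(\Sigma,\cdot)$; they differ only in the coefficient system, because the deck transformation $t$ carries $\Sigma^{-}$ to $\Sigma^{+}$ while acting on the Alexander module $\A_\delta$ by multiplication by $\frac{m}{m+1}$. After rescaling by the leading coefficient (taking $T=(m+1)t$ so $T$ preserves the integral lattice), the two boundary terms become $f(\omega^{m})$ and $f(\omega^{m+1})$ for the single function $f(\cdot)=\rho(\Sigma,\phi_{\cdot})$, which is precisely why one gets $g(\omega^{m+1})-g(\omega^{m})$ with the same $g$. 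Your proposal instead attributes the $m$ vs.\ $m+1$ split to ``the two cyclic quotients coming from the factors of $\Delta_K$'' and to distinct ``auxiliary $3$-manifold pieces.'' If the two contributions really came from different boundary pieces corresponding to different factors of $\Delta_K$, there would be no reason at all for the same $g$ to appear in both terms, and the statement would be vacuous. So this is not a bookkeeping detail but the core of the proof, and it is absent.

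Two further gaps. First, the character you propose, $\pi_1(W')\to\Z\to\C^{*}$, is a rank-one abelian representation; but after cutting open, $\pi_1$ of the cut-open manifold maps into the Alexander module, and the relevant coefficient system is $\phi_\omega:A\to U(1)$ for a free abelian group $A$ that can have rank greater than one (the $\Z$-span of the basis $\mathcal B$ of a $\delta$-primary summand $\A_\delta\cong\Q[t,t^{-1}]/\delta^{q}$). The $\rho$-identity one gets is therefore a multivariable equation on $\T^{n}$, and passing to the one-variable conclusion requires the averaging argument of Lemma~\ref{lem: analysis lemma}: $g$ arises as an integral $\frac{1}{(2\pi)^{n-1}}\int_{\T^{n-1}}f$, and the continuity and jump-discontinuity claims are verified by dominated convergence, not as a formal consequence of classical Levine--Tristram theory. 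Your single-variable shortcut does not justify the stated form of $g$. Second, the exponent $k$ does not come from an intersection number of a capping surface with the dual curve; it comes from replacing the generator $g$ of $\A_\delta$ by $\frac{1}{k}g$ so that the image of $\pi_1$ of the cut-open manifold lands in the lattice $A$. And $M_J$ appears exactly once in $\partial W$, not ``with cabling multiplicity.'' The exponent on $\omega$ in $\sigma_J(\omega^{k})$ records the image of $\mu_J$ in the rescaled lattice, not a geometric cable of $J$.
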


The operation of cabling has a well understood effect on algebraic concordance (and so on the signature function):
$$
\sigma_{J_{(k,1)}}(\omega) = \sigma_J(\omega^k).
$$
Thus, Theorem~\ref{thm:genusonesign} should suggest that if $J$ is a derivative associated to a  ($1.5$)-solution then some cable of $J$ is algebraically concordant to a difference of cables of some knot:
$
[J_{(k,1)}] = [S_{(m,1)}]-[S_{(m+1,1)}].
$

As a demonstration of the strength of Theorem~\ref{thm:genusonesign} we recover Daryl Cooper's famous signature condition on the derivatives of a slice knot.  (Proposition~\ref{prop:CooperCondition1})

\begin{thm}[Chapter 1, Corollary 3.14 of \cite{CooperThesis}, See also Theorem 2 of \cite{GL2}]\label{Thm: Cooper sign 1}
If $K$ is a genus one slice knot with $\Delta_K(t)=((m+1)t-m)(mt-(m+1))$ then on any genus one Seifert surface for $K$ there is a derivative $J$ which satisfies that for all $p, c\in \N$  such that $(p,m)=(p,m+1)=(c,p)=1$ it follows that 
$$
\displaystyle \sum_{\ell=1}^{r} \sigma_J\left( e^{2\pi i \frac{c ((m+1) \overline{m})^\ell}{p}} \right) = 0
$$
where $\overline{m}$ is an inverse to $m$ mod p and $r$ is the order of $(m+1)\overline{m}$ in $\Z/p$.
\end{thm}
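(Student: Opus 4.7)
The strategy is to deduce Cooper's condition from Theorem~\ref{thm:genusonesign} by a telescoping argument over the cyclic orbit of $c$ under multiplication by $\lambda := (m+1)\overline{m}$ in $(\Z/p)^*$, exploiting the elementary identity $\lambda m \equiv m+1 \pmod p$.

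Since $K$ is slice, it lies in $\FF_{1.5}$, so Theorem~\ref{thm:genusonesign} supplies a derivative $J$ on the given genus one Seifert surface, an integer $k \in \N$, and a function $g\colon \T \to \R$ (continuous away from the roots of some polynomial, with at worst jump discontinuities there) such that
$$\sigma_J(\omega^k) = g(\omega^{m+1}) - g(\omega^m) \qquad \text{for every } \omega \in \T.$$
Now fix $p,c \in \N$ with $(p,m)=(p,m+1)=(c,p)=1$, set $\omega = e^{2\pi i/p}$, and assume first that $(p,k)=1$; write $\overline{k}$ for the inverse of $k$ modulo $p$. Substituting $\omega \mapsto \omega^{c\lambda^\ell \overline{k}}$ in the identity above gives
$$\sigma_J\bigl(\omega^{c\lambda^\ell}\bigr) = g\bigl(\omega^{c\lambda^\ell(m+1)\overline{k}}\bigr) - g\bigl(\omega^{c\lambda^\ell m \overline{k}}\bigr).$$
Because $\lambda m \equiv m+1 \pmod p$, we have $\lambda^\ell(m+1) \equiv \lambda^{\ell+1} m \pmod p$, so the right-hand side equals $g(\omega^{c\lambda^{\ell+1} m\overline{k}}) - g(\omega^{c\lambda^\ell m\overline{k}})$. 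Summing $\ell = 1,\dots,r$ the right-hand side telescopes to $g(\omega^{c\lambda^{r+1} m\overline{k}}) - g(\omega^{c\lambda m\overline{k}})$, which vanishes because $\lambda^r \equiv 1 \pmod p$. This establishes Cooper's identity in the case $(p,k)=1$.

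The case $(p,k) > 1$ is the main technical obstacle. When $p$ shares a factor with $k$ the substitution $a = c\lambda^\ell \overline{k}$ is undefined and the telescoping breaks down. One natural route is to exploit regularity: for generic $c,p$ the hypothesized arguments $c\lambda^\ell/p$ avoid the finitely many discontinuities of both $\sigma_J$ and $g$, so one can approximate these angles by rationals whose denominators are coprime to $k$, apply the previous argument at each stage, and pass to the limit. A second option is to strengthen Theorem~\ref{thm:genusonesign} to produce a $k$ avoiding any prescribed finite set of primes (e.g.\ by replacing the $1.5$-solution by a more flexible one before extracting $g$). I expect essentially all of the technical work to be absorbed into handling this case; the core telescoping argument carries the arithmetic content of the theorem and is otherwise entirely elementary.
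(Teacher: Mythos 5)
Your telescoping computation for the case $\gcd(p,k)=1$ is correct and captures the same arithmetic identity $\lambda m \equiv m+1 \pmod p$ (with $\lambda=(m+1)\overline m$) that drives the paper's argument. But the case $\gcd(p,k)>1$ is not a technical footnote you can defer---it is where the actual content of the lemma lies, and neither of your two proposed workarounds closes the gap.

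The approximation route fails for a structural reason, not a regularity one. The quantity you want to kill, $S=\sum_{\ell=1}^{r}\sigma_J\!\left(e^{2\pi i c\lambda^\ell/p}\right)$, is a sum over the orbit of $c$ under multiplication by $(m+1)\overline m$ in $(\Z/p)^\times$. If you perturb $e^{2\pi i/p}$ to a nearby root of unity $e^{2\pi i/p'}$ with $(p',k)=1$, the residue $\overline m$, the multiplier $\lambda$, and the orbit length $r'$ all change discontinuously, so the telescoping sum you can prove vanishes at $p'$ is not an approximation of $S$; it is a different finite sum. In addition, the angles $c\lambda^\ell/p$ are precisely the sort of rational angles at which $\sigma_J$ may be discontinuous (roots of $\Delta_J$ are algebraic), so even the premise that you are at continuity points is unfounded. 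The second route---strengthening Theorem~\ref{thm:genusonesign} to produce a $k$ coprime to a prescribed $p$---is speculative: the integer $k$ arises in that proof from the need to fit the finitely generated image of $\pi_1(W)$ inside a free abelian lattice of $\A_\delta$, and there is no evident way to steer its prime support.

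The paper resolves this arithmetically and exactly, via the auxiliary Proposition~\ref{prop:CooperCondition1}. Factor $k=k_0k_1$ with $(k_0,p)=1$ and every prime of $k_1$ dividing $p$; then $(pk_1,k_0)=(pk_1,m)=(pk_1,m+1)=1$. Choose $\overline m$ and $\overline{k_0}$ to be inverses modulo $pk_1$ (replacing $\overline m$ by another inverse mod $p$ does not change $S$). The order of $(m+1)\overline m$ modulo $pk_1$ is $qr$ for some $q\in\N$, and summing over $\ell=1,\dots,qr$ covers the mod-$p$ orbit $q$ times, so $qS=\sum_{\ell=1}^{qr}F\!\left(k\cdot \overline{k_0}c(m+1)^\ell\overline m^\ell/(pk_1)\right)$ where $F(x)=f(e^{2\pi i x})$. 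Now the functional equation $F(kx)=G((m+1)x)-G(mx)$ applies verbatim, the resulting double sum telescopes over the enlarged orbit of length $qr$ modulo $pk_1$, and the surviving boundary terms cancel because $((m+1)\overline m)^{qr}\equiv 1 \pmod{pk_1}$. Hence $qS=0$ and $S=0$, with no limiting argument and no coprimality restriction on $p$ and $k$. This lifting-to-$pk_1$ trick is the step missing from your proposal.
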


\subsection{Cut open null-bordisms}\label{subsect:cut open}

The results of the preceding subsection follow by studying a $4$-manifold called a \textbf{null-bordism}, bounded by a derivative associated to a slice disk.  See \cite[Definition 8.3 and Proposition 8.4]{CHL7} and Proposition~\ref{prop:NullBordismDeriv} of this paper.  This 4-manifold has many of the properties of a slice disk complement for $J$, except that $H_1(M_J)\to H_1(V)\cong \Z$ is the zero map.  Our key observation is that if one lets $\Sigma$ be a 3-manifold dual to a generator of $H_1(V)$ and $N$ be a product neighborhood of $\Sigma$, one gets what we will call a \textbf{cut open null-bordism} $W=V-N$ (See also Definition~\ref{defn: cut open}) with boundary given by $M_J$ together with two copies of $\Sigma$: $\Sigma^+$ and $\Sigma^-$.  The latter has its orientation reversed.  This cobordism has many properties in common with a slice disk complement, for example, the map $H_1(M_J)\to H_1(W)$ is often non-trivial.  A key difference is that its boundary now has three components, $\bdry W = M_J\sqcup \Sigma^+ \sqcup \Sigma^-$.  The following technical result reveals that from the point of view of the techniques of \cite{COT} cut open null-bordisms are very much like slice disk complements.

\begin{thm}\label{thm:sampleMainTheorem}[See Proposition \ref{prop:IsectnForm} and Corollary \ref{cor:IsectnFormCover}]
Suppose that $M_J$ bounds a null-bordism $V$, $\Sigma$ is a 3-manifold dual to the generator of $H_1(V)$ with neighboorhood $N$, and $W=V-N$.   Let $\phi:\pi_1(V)\to \Gamma$ be a homomorphism to a PTFA group.  Assume that $\rank(H_1(M_J;\K(\Gamma))) = \beta_1( M_J)-1$.    Let $A$ be the image of  $\pi_1(W)\to \pi_1(V)\to \Gamma$ then $\dfrac{H_2(W)}{H_2(\bdry W)} = \dfrac{H_2(W;\K(A))}{H_2(\bdry W;\K(A))} = 0$.
\end{thm}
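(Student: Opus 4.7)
The plan is to combine a Wang-type long exact sequence obtained by cutting $V$ along $\Sigma$ with the null-bordism property of $V$ itself. Since $H_1(M_J)\to H_1(V)\cong\Z$ is zero, a map $f\colon V\to S^1$ representing a generator of $H^1(V;\Z)$ is null-homotopic on $M_J$, so after a homotopy the dual 3-manifold $\Sigma = f^{-1}(\ast)$ lies in the interior of $V$ and is closed; hence $\bdry W = M_J \sqcup \Sigma^+ \sqcup \Sigma^-$. Writing $V = W\cup N$ with $N\cong \Sigma\times[-1,1]$ and $W\cap N = \Sigma^+\sqcup\Sigma^-$, Mayer--Vietoris (using the retraction $N\simeq \Sigma$) collapses to the Wang-type long exact sequence
$$\cdots \to H_i(\Sigma;R) \xrightarrow{\iota_*^+ - \iota_*^-} H_i(W;R) \to H_i(V;R) \to H_{i-1}(\Sigma;R) \to \cdots,$$
valid for any coefficient module $R$ pulled back from $\pi_1(V)$, in particular $R=\Z$ and $R=\K(\Gamma)$.

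A short diagram chase then reduces the theorem to the null-bordism-style vanishing $H_2(V;R) = \im(H_2(M_J;R)\to H_2(V;R))$. Indeed, given $x\in H_2(W;R)$, its image in $H_2(V;R)$ equals the image of some $y\in H_2(M_J;R)$; subtracting off the class of $y$ in $H_2(W;R)$ produces a class in $\ker(H_2(W;R)\to H_2(V;R))$, which by exactness lies in the image of $H_2(\Sigma;R)$ under $\iota_*^+ - \iota_*^-$ and thus in the image of $H_2(\Sigma^+;R)\oplus H_2(\Sigma^-;R)\subseteq H_2(\bdry W;R)$. Hence $x$ lies in the image of $H_2(\bdry W;R)$, as required.

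It therefore suffices to verify $H_2(V;R) = \im(H_2(M_J;R)\to H_2(V;R))$ for $R=\Z$ and $R=\K(\Gamma)$; the $\K(A)$-coefficient statement for $W$ then follows by faithful flatness of $\K(\Gamma)$ over $\K(A)$. For $R=\Z$ this should be immediate from the definition of a null-bordism (Definition 8.3 of \cite{CHL7}). For $R=\K(\Gamma)$ one combines Poincaré--Lefschetz duality on $V$ with the long exact sequence of the pair $(V,M_J)$ with $\K(\Gamma)$-coefficients, invoking the hypothesis $\rank H_1(M_J;\K(\Gamma)) = \beta_1(M_J)-1$ --- the PTFA analogue of the slice condition on $M_J$ --- to match ranks. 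The main obstacle will be this twisted step on $V$: translating the integral null-bordism definition into $\K(\Gamma)$-coefficients while invoking the rank-drop hypothesis at precisely the right moment. Once this is in place, the Wang-sequence diagram chase yields both assertions of the theorem simultaneously.
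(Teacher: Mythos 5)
Your plan is sound, and the core Wang--sequence chase is exactly the spanning argument the paper uses in Proposition~\ref{prop:IsectnForm} for the untwisted statement: given $x\in H_2(W;R)$, its image in $H_2(V;R)$ is hit by $H_2(\bdry V;R)=H_2(M_J;R)$, and the discrepancy lies in $\ker\bigl(H_2(W;R)\to H_2(V;R)\bigr)=\im(\iota_*^+ - \iota_*^-)\subseteq \im\bigl(H_2(\bdry W;R)\bigr)$. Where you genuinely diverge from the paper is in running this same chase with $\K(\Gamma)$-coefficients. The paper does \emph{not} do this for its twisted statement: Theorem~\ref{thm:nIsectnForm} instead establishes linear independence of the lifted Lagrangian basis via the intersection pairing and then bounds $\rank\bigl(H_2(W;\K(\Gamma))/H_2(\bdry W;\K(\Gamma))\bigr)$ by an Euler--characteristic comparison of the twisted Betti numbers of $V$ and $W$ run through the Wang sequence. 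That heavier machinery is needed there because Theorem~\ref{thm:nIsectnForm} must handle $n$-null-bordisms with $k>0$ Lagrangians, where the quotient for $V$ does not vanish. In the $k=0$ null-bordism case at hand, your uniform Mayer--Vietoris chase, valid for any local system pulled back from $\pi_1(V)$, is cleaner and collapses both halves of the claim to the single input $H_2(V;R)/H_2(\bdry V;R)=0$.

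The step you label the ``main obstacle,'' $H_2(V;\K(\Gamma))/H_2(\bdry V;\K(\Gamma))=0$, is precisely Proposition~\ref{highOrderSoln} (quoted from Lemma~5.10 of \cite{CHL3}) applied with $k=0$, or it can be read directly from the argument in the proof of Proposition~\ref{highOrderSoln.5}: the rank hypothesis forces $\phi|_{\pi_1(M_J)}$ (hence $\phi$) to be nontrivial, so \cite[Propositions 3.7 and 3.10]{C} together with Poincar\'e duality give $\beta_0^\Gamma(V)=\beta_1^\Gamma(V)=\beta_3^\Gamma(V)=\beta_4^\Gamma(V)=0$; thus $\beta_2^\Gamma(V)=\chi(V)=\beta_2(V)-1$, while Poincar\'e duality on $\bdry V$ and the rank hypothesis give $\beta_2^\Gamma(\bdry V)=\beta_1^\Gamma(\bdry V)=\beta_1(\bdry V)-1$; subtracting and using the integral null-bordism condition $\beta_2(V)=\beta_2(\bdry V)$ gives rank zero. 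So the gap is genuinely fillable by the tools you name, with no new idea required. One small point worth making explicit: the Wang sequence is \emph{not} directly available over $\K(A)$, since that coefficient system does not extend over $\pi_1(V)$ or $\pi_1(\Sigma)$. Both you and the paper (Corollary~\ref{cor:IsectnFormCover}) therefore correctly run the argument over $\K(\Gamma)$ first and then descend, using that $\K(\Gamma)$ is a nonzero free, hence faithfully flat, left $\K(A)$-module.
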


  What it means for  a group to be PTFA is recalled in Section~\ref{sect:bordism}.  Importantly if $\Gamma$ is PTFA then $\Q[\Gamma]$ embeds in a skew-field of fractions $\K(\Gamma)$. See\cite[Chapter 2]{ste}) and \cite[Proposition 32]{C}.)

  Observe that the cut-open null-bordism $W$ is a fundamental domain for the infinite cyclic cover $\widetilde{V}$ of $V$.   If $t:\widetilde{V}\to \widetilde{V}$ is the generator of the deck group, then $t$ sends $\Sigma^+\subseteq \bdry W$ to$\Sigma^-$.    At this point the reader should think that the correct version of Conjecture~\ref{conj:Kauffman} concludes not that $J$ is a slice link but that, loosely speaking, $M_J$ is cobordant via $W$ to $\Sigma^+\sqcup \Sigma^- = (t_*-\Id)\Sigma$.

Next we discuss a refinement of Theorem~\ref{thm:sampleMainTheorem} to the solvable filtration.  If $K$ is $(n+1)$-solvable, then $J$ admits an $(n+1)$-null-bordism, $V$.  (See \cite{CHL7} and Definition~\ref{defn:NullBordism}).  We can cut open $V$ to get a fundamental domain $W$ for the infinite cyclic cover $\widetilde{V}$.  Indeed, a theorem similar to Theorem~\ref{thm:sampleMainTheorem} holds (Theorem~\ref{thm:nIsectnForm}) when $V$ is an $(n+1)$-null-bordism and $\Gamma^{(n+1)}_r=0$.  ($\Gamma^{(n+1)}_r$ is the $(n+1)$'st term in the rational derived series, see Section~\ref{sect:bordism}).  A slightly weaker result (Theorem~\ref{thm:n.5IsectionForm}) holds when $V$ is an $n.5$-null-bordism and $\Gamma^{(n+1)}_r=0$.  
%These results will suggest that if $K$ is $(n+1)$-solvable then then while $J$ may not be $n$-solvable it is the case that  $M_J$ is cobordant to $(t_*-\Id)\Sigma$ via a 4-manifold with many of the same properties as an $n$-solution.

\subsection{Application of cut open null-bordisms to doubling operators}

A different (but related) approach to understanding structure in $\CC$ was suggested in ~\cite{CHL5}, namely considering $\CC$ as a set on which there exists many natural operators. Since one example of such an operator is connected-sum with a fixed knot, this approach can be argued to be more general than focusing on $\CC$ as an abelian group. In fact, it was  suggested in ~\cite{CHL5} that $\CC$ is a \textbf{fractal space}. A fractal space is a metric space which admits systems of natural self-similarities.  Leaving aside (in this paper) the definition of a metric on $\CC$, self-similarities are merely injective functions ~\cite[Definition 3.1]{BGN}. 

The proposed self-similarities are classical \textbf{satellite operations}. Let $R$ be a slice knot and  let $\eta$ be an unknotted curve in $S^3-R$ that has zero linking number with $R$. Since $\eta$ is unknotted its exterior is a solid torus $ST$ and $R$ is a \textbf{pattern knot} of winding number zero in $ST$. Then, given another knot $J$ let $R_\eta(J)$  be the satellite knot of $J$ with pattern $R$ and axis $\eta$.  This correspondence defines what was called in ~\cite{CHL5} a  \textbf{doubling operator},  
\begin{equation}\label{eq:doublingoperator}
R_{\eta}:\mathcal{C}\to\mathcal{C}
\end{equation}
on the set of knot concordance types.  Such operators are rarely homomorphisms, whence arises the focus on $\CC$ as a set. There has been considerable interest in whether such functions are \textbf{weakly injective} (an operator is called weakly injective if $R(J)=R(0)$ implies $J=0$ where $0$ is the class of the trivial knot). For example, it is a famous open problem as to whether or not the Whitehead double operator is weakly injective ~\cite[Problem 1.38]{Kirbyproblemslist} \cite{HedKirk} .

We remark that for genus one knots the approach via doubling operators is equally as general as the approach of studying derivatives. For, if $K$ is a genus one knot with derivative knot $J$,  it was shown in ~\cite[Proposition 1.7]{CFT} that $K$ is concordant to a knot of the form $R_{\eta}(J)$ for some genus one ribbon knot $R$.

 In ~\cite{CHL5} large classes of such operators, called ``robust doubling operators'' were introduced and evidence was presented for their injectivity.  Presently no  doubling operator is known to be  weakly injective. Since it is known that $R_{\eta}(\FF_n)\subseteq \FF_{n+1}$ ~\cite[Lemma 6.4]{CHL4} , a reasonable way to begin to study the weak injectivity of doubling operators is to posit the possible weak injectivity of the maps
\begin{equation}\label{eq:operatoronsolvable}
\frac{\CC}{\mathcal{F}_{n}}\overset{R_\eta}{\longrightarrow}\frac{\CC}{\mathcal{F}_{n+1}},
\end{equation} 
for, if each such map were weakly injective,  then 
$$
R_{\eta}(J)~\text{slice} \Longrightarrow J\in \cap _{n=1}^\infty \FF_n.
$$

Similarly to the case for derivatives, $R_\eta(J)$ being slice ($n$-solvable) implies that $M(J)$ bounds a null-bordism ($n$-null-bordism) so that the same strategy as we outlined in subsection~\ref{subsect:cut open} to study derivatives applies equally well in this setting.

We will point out herein that it is a consequence of the authors' previous work ~\cite{CD1} that there exist robust doubling operators for which the functions of ~(\ref{eq:doublingoperator}) and ~(\ref{eq:operatoronsolvable}) (even $n=0$!) \textit{fail} to be weakly injective.  Indeed there are examples where $R_\eta(J)$ is slice and $J$ has non-vanishing Arf invariant and signature function.  However, a striking consequence of the present work is that for most doubling operators this same failure \textit{does not occur} and we are able to prove a surprisingly strong result in support of weak injectivity.

\begin{thm}\label{thm:doublingopsign} Suppose that $R_{\eta}$ is a doubling operator wherein $\eta$, viewed in the Alexander module of $R$, has annihilator coprime to every non-zero polynomial of the form $(at^p-b)$, $n\in \mathbb{N}, a,b \in\Z$, and for every Lagrangian submodule $Q\subseteq \A(R)$ there is a slice disk $\Delta$ for $R$ such that $\eta\notin Q+\ker(\A(R)\to \A(\Delta))$. If $R_\eta(J)$ is slice, or even $1.5$-solvable, then $J$ has vanishing Levine-Tristram signature function.
\end{thm}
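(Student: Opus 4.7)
The plan is to parallel the proof of Theorem~\ref{thm:genusonesign}: use the hypothesis that $R_\eta(J)$ is $1.5$-solvable to build a cut-open null-bordism for $M_J$ that carries enough algebraic structure to force the vanishing of $\sigma_J$.

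First, by the observation in Subsection~\ref{subsect:cut open}, since $R_\eta(J)$ is $1.5$-solvable via a $1.5$-solution $V_0$, the $0$-surgery manifold $M_J$ bounds a $1.5$-null-bordism $V$ constructed from $V_0$. The idea is to push a Seifert surface $F$ for $R$ (chosen disjoint from $\eta$, which is possible since $\eta$ is unknotted and $\lnk(\eta,R)=0$) into $V_0$, cut along $F$ and along a cobounding for $\eta$, and restrict to the piece bounded by $M_J$. The $4$-manifold $V$ thus carries a natural reference map into the complement of a slice disk for $R$, paralleling Proposition~\ref{prop:NullBordismDeriv}.

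Second, I would choose the PTFA coefficient system using both hypotheses on $\eta$. The $1.5$-solution $V_0$ determines a Lagrangian submodule $Q\subseteq\A(R)$ arising from $V_0$'s internal structure. The Lagrangian-avoidance hypothesis then supplies a slice disk $\Delta$ for $R$ with $\eta\notin Q+\ker(\A(R)\to\A(\Delta))$, so the class of $\eta$ survives in a suitable quotient of $\A(R)$ that I would use to build a metabelian PTFA group $\Gamma$ factoring through $\pi_1(V)$. The coprimality hypothesis --- that the annihilator of $\eta$ is coprime to every polynomial $at^p-b$ --- is exactly what is needed so that a prescribed one-dimensional character at any $\omega\in\T$ can be chosen without being obstructed by the Alexander polynomial of $R$. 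Consequently, for each $\omega\in\T$ the Levine--Tristram signature $\sigma_J(\omega)$ is realized as a genuine twisted signature on $V$.

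Third, I would apply Theorem~\ref{thm:n.5IsectionForm} (the $n.5$ refinement of Theorem~\ref{thm:sampleMainTheorem}) to the cut-open null-bordism $W$ built from $V$, with coefficients in $\K(\Gamma)$. This forces the twisted intersection form on $H_2(W;\K(\Gamma))$ modulo the image from $\partial W$ to vanish. A standard Atiyah--Patodi--Singer--type $\rho$-invariant identity then identifies the signature of this form with $\sigma_J(\omega)$ plus signature contributions attributable to the slice disk $\Delta$ for $R$; the latter vanish because $R$ itself is slice and $\Delta$ is a genuine slice disk. Thus $\sigma_J(\omega)=0$ for a dense set of $\omega\in\T$, hence everywhere after renormalization at jumps.

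The main difficulty is the PTFA construction in Step~2. Without the Lagrangian-avoidance hypothesis, the class of $\eta$ could be ``sliced off'' by $V_0$'s internal Lagrangian, so that the signature of $J$ would be invisible to any metabelian invariant built from $V_0$ and $\Delta$. Without the coprimality hypothesis, only a sparse subset of $\omega\in\T$ could be realized, leaving gaps in the signature function. Together they turn the cut-open null-bordism vanishing into a statement covering all of $\T$.
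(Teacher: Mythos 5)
Your overall framework is pointed in the right direction (build a $1.5$-null-bordism $V$ from the given $1.5$-solution, cut it open, use PTFA coefficients and $\rho$-invariants), but the argument breaks down at the crucial final step, and the role of the coprimality hypothesis is misidentified.

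First, a smaller issue: the construction of $V$ is not ``pushing a Seifert surface for $R$ into $V_0$ and cutting.'' The paper uses the \emph{fundamental infection cobordism} $E$ (with $\partial E = M_{R_\eta(J)}\sqcup -M_R\sqcup -M_J$), then glues on a slice disk exterior $E(\Delta)$ for $R$ and the $1.5$-solution $Y$ for $R_\eta(J)$, yielding Proposition~\ref{Prop:NullBordismInfection}. This is also where the Lagrangian-avoidance hypothesis enters: $\ker(\A(R)\to\A(V)) = P_\Delta + P_Y$, and non-degeneracy of $V$ requires $\eta\notin P_\Delta+P_Y$.

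The serious gap is in your third step. The boundary of the cut-open bordism $W$ is $M_J\sqcup\Sigma^+\sqcup\Sigma^-$, where $\Sigma$ is the $3$-manifold dual to the generator of $H_1(V)$. The $\rho$-invariants of $\Sigma^\pm$ are \emph{not} contributions attributable to the slice disk $\Delta$, and they do \emph{not} vanish because $R$ is slice. What is true is that the deck diffeomorphism $t:\Sigma^-\to\Sigma^+$ produces a relation between them, which, after the change of variable $T = d\cdot t$ needed to stay inside an integral lattice $A\subseteq \A_\delta$, yields a functional equation of the shape
$$
\sigma_J\bigl(\omega_1^{kd}\bigr) = f\bigl(\omega^M\bigr) - f\bigl(\omega^d\bigr)
$$
for a multivariable function $f(\omega) = \rho(\Sigma^-,\phi_\omega\circ i_-)$ on $\T^n$. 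This is Theorem~\ref{thm: T*-1 sign}. The coprimality hypothesis on the annihilator of $\eta$ is \emph{not} about being able to realize arbitrary characters $\omega\in\T$ — that part is always possible. Its actual role is to ensure that $\mu_J$ (equivalently, the basis vector $e_1$) is not an eigenvector of any power of the matrix $M$ representing $T$; the paper's functional-analytic Lemma~\ref{lem: analysis lemma} (the $L^2(\T^n)$ argument comparing Fourier coefficients $a_{M^{-1}I}$ and $a_{d^{-1}I}$) then shows that the only way such a functional equation can hold is if $\sigma_J$ vanishes identically. Your proposal elides this entire step by claiming the boundary contributions vanish outright; without the functional-analytic lemma the argument does not close, and no appeal to ``$R$ is slice'' can substitute for it.
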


%For completeness, we recall what the notation \emph{isotropic} means.  
For any knot $K$ with Alexander module $\A(K)$, there is a non-singular Hermitian bilinear form $\Bl_K:\A(K)\times \A(K)\to \frac{\Q(t)}{\Q[t,t^{-1}]}$.  A submodule $P\subseteq \A(K)$ is a called \textbf{Lagrangian} if 
$$P = P^\perp:= \{q\in \A(K) \text{ such that for all }p\in P, ~\Bl_K(p,q)=0\}$$
is its own orthogonal complement.  According to \cite{COT}, if $V$ is a slice disk complement (or even a $1$-solution, see Definition~\ref{defn:n-sol}) for $K$ then $\ker(\A(K)\to \A(V))$ is Lagrangian.  The doubling operator of Figure~\ref{fig:doublingOp} has the desired properties, as it has only one Lagrangian submodule, that submodule corresponds to a slice disk, and $\eta$ is not in that submodule.  See \cite[Section 3]{Davis14}.

\begin{figure}[htbp]
\setlength{\unitlength}{1pt}
\begin{picture}(165,100)
\put(0,0){\includegraphics[height=100pt]{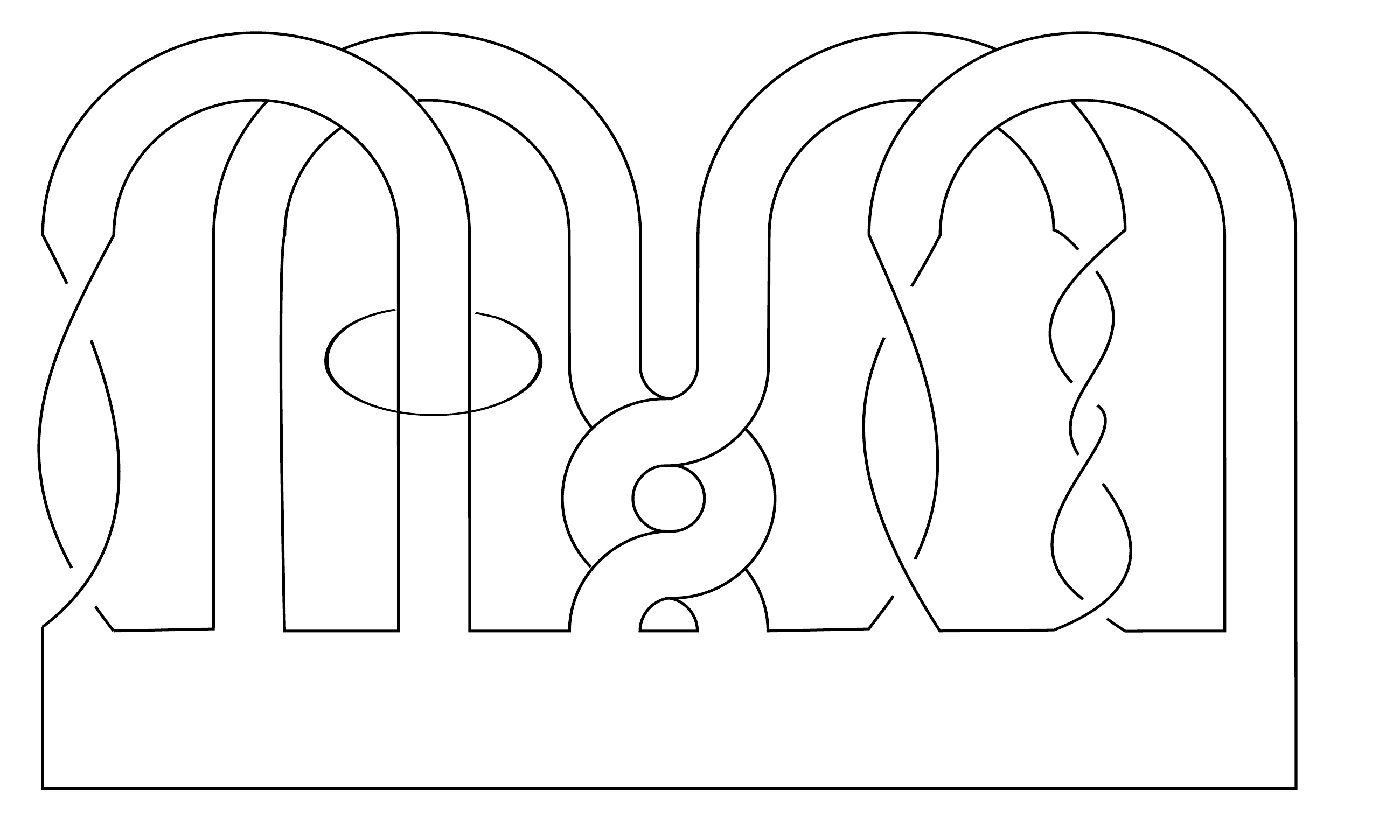}}
\put(60, 65){$\eta$}
\end{picture}
\caption{A doubling operator satisfying the conditions of Theorem~\ref{thm:doublingopsign}.}\label{fig:doublingOp}
\end{figure}

Recall that the signature function determines the class of a knot in $\frac{\CC}{\mathcal{F}^{}_{0.5}}$ modulo torsion.  Thus, Theorem~\ref{thm:doublingopsign} falls only slightly short of the desired goals of proving the weak injectivity of the maps:
$$\frac{\CC}{\mathcal{F}^{}_{0.5}}\overset{R_\eta}{\longrightarrow}\frac{\CC}{\mathcal{F}_{1.5}}
$$

\subsection{Outline of paper} In Section~\ref{sect:bordism} we explore the properties of null-bordisms.  In Section~\ref{sect:cut open} we explicitly describe the procedure of cutting open null-bordisms and study the resulting 4-manifolds.  In Section~\ref{sect:sign} we prove Theorems~\ref{thm:genusonesign} and \ref{thm:doublingopsign} regarding the signature function.  Finally in Section~\ref{sect:suff} we provide a new sufficient condition for a knot to be $1.5$-solvable, proving Theorem~\ref{thm:sufficiency}.  

\subsection*{acknowledgments} Thanks are in order to Shelly Harvey for helpful advise, and to Carolyn Otto, Arunima Ray and Jung Hwan Park for reading this document while it was in preparation.  

\subsection*{Note}  In December of 2014, while this document was still in its preparations, the first author sadly passed away, leaving the second author deeply in his debt.

\section{Null-bordisms and $n$-null-bordisms}\label{sect:bordism}

The main goal of this paper is to address the following questions:

\begin{question}\label{quest:one} If $K$ is a slice knot with slice disk $\Delta$ and $J$ is a derivative of $K$ associated to $\Delta$, then what can we conclude \textit{geometrically} about $J$? 
\end{question}
\begin{question}\label{quest:two}  If $K$ is $n$-solvable via $Y$ ($n\geq 1$) and $J$ is a derivative of $K$ associated to $Y$, then what can we conclude \textit{geometrically} about $J$? 
\end{question}
\begin{question}\label{quest:three}  Suppose $R_\eta$ is a doubling operator wherein $R$ admits a ribbon disk $\Delta$ with associated Lagrangian $P_\Delta$. If $K:=R_\eta(J)$ is a slice knot (or is $n$-solvable) with associated Lagrangian $P_D$ and regarded as an element of the Alexander module $\eta\notin P_D+P_\Delta$, then what can we conclude \textit{geometrically} about $J$? 
\end{question}

For completeness and for ease of reference we recall what it means for a knot to be $n$-solvable.
% or $n.5$-solvable. 

\begin{defn}[Definition 1.2 of \cite{COT}]\label{defn:n-sol}
Let $K$ be a knot and $n\in \N\cup\{0\}$.  Then $K$ is called $n$-solvable if there exists a 4-manifold $W$ such that $\bdry W = M_K$ is the zero surgery on $K$ and 
\begin{enumerate}
\item The map $H_1(M_K)\to H_1(W)$ is an isomorphism.
\item There exist embedded surfaces $L_1,\dots L_k$, $D_1,\dots D_k$ (called Lagrangians and duals respectively) which are disjoint except that for all $i$, $L_i$ intersects $D_i$ transversely in a single point and such that $\{L_i, D_i\}_{i=1}^k$ forms a basis for $H_2(W)\cong \Z^{2k}$.
\item $\im(\pi_1(D_j)\to \pi_1(W))$ and $\im(\pi_1(L_j)\to \pi_1(W))$ are each contained in $\pi_1(W)^{(n)}_r$.
\end{enumerate}
If additionally, $\im(\pi_1(L_j)\to \pi_1(W))\subseteq \pi_1(W)^{n+1}_r$ then $W$ is an $n.5$-solution and $K$ is $n.5$-solvable.  $\mathcal F_n$ and $\mathcal F_{n.5}$ are the sets of all $n$ and $n.5$-solvable knots respectively.
\end{defn}

Recall that for any group $G$ the \textbf{rational derived series} of $G$, $G^{(n)}_r$ is defined recursively by $G^{(0)}_r=G$, and $G^{(n+1)}_r=\ker(G^{(n)}_r\to H_1(G^{(n)}_r;\Q))$ is the set of all elements of $G_r^{(n)}$ which are of finite order in the abelianization.  This forms the most rapidly descending series for which $G^{(n)}_r/G^{(n+1)}_r$ is torsion-free and abelian.

On our way to a preliminary answer to Questions~\ref{quest:one} and \ref{quest:two} we construct a 4-manifold bounded by $M(J)$ in the case that $K$ is slice and a similar one in the case that $K$ is $n$-solvable.  We begin by discussing the meaning and necessity of the phrases ``associated to $\Delta$'' and ``associated to $Y$''. 

\begin{defn}\label{def:assocLagrangian} Suppose that $Y$ is the exterior of a slice disk $\Delta$ for the knot $K$, or more generally, suppose that $Y$ is a rational $n$-solution 
%with complexity one 
for $K$ for some $n\geq 1$. Then the kernel of the map of rational Alexander modules
\begin{equation}\label{eqn:alexmaps}
\A(K)\cong \A(M_K)\overset{i_*}{\longrightarrow} \A(Y)
\end{equation}
is the \textbf{Lagrangian submodule $P_Y$ associated to $Y$ (or to $\Delta$)}. 
%It is called a Lagrangian because it is self-annihilating with respect to the Blanchfield form by ~\cite[Theorem 4.4, $n=1$]{COT}. 
For any Seifert surface $F$ of genus $g$ for $K$, a derivative $J = \{d_1,\dots d_g\}$ is 
%the ordered, oriented link $J=\{d_1,...,d_g\}$ embedded on $F$ is called a \textbf{derivative} of $K$ if  the span of $\{d_1,...,d_g\}$ in $H_1(F)$ is a \textbf{metabolizer} for the Seifert form, i.e. it is a $\Z^g$ summand on which the Seifert form vanishes. $J$ is 
\textbf{associated to $Y$ (or $\Delta$)} if  $\{d_1,...,d_g\}$ generates the rational vector space $P_Y$.
\end{defn}

The existence of a derivative associated to any slice disk, or indeed to any $n$-solution ($n\ge 1$) is given by \cite[Lemma 5.5]{CHL7}.

Since $\frac{\pi_1(Y)^{(1)}_r}{\pi_1(Y)^{(2)}_r}$ is $\Z$-torsion free and $\A(Y)\cong \frac{\pi_1(Y)^{(1)}_r}{\pi_1(Y)^{(2)}_r}\otimes \Q$, it follows that, for each $j$
\begin{equation}\label{eq:dsecondderived}
i_*(d_j)\in \pi_1(Y)^{(2)}_r.
\end{equation}
It also follows that if $\{d_1,...,d_g\}$  is extended to a symplectic basis $\{d_i,b_i\}$ as shown in Figure~\ref{fig:funkytwisty}, and  $\{\alpha_{1},..,\alpha_{g},\beta_1,..,\beta_g\}$ is the basis of $H_1(S^3-\Sigma)$  that is dual to $\{d_1,..,d_{g},b_1,..,b_g\}$ under linking number in $S^3$, then $\{i_*(\alpha_{1}),...,i_*(\alpha_{g})\}$ spans the image of $i_*$ in ~(\ref{eqn:alexmaps} ) ~\cite[Proposition 5.6]{CHL7}. 

\begin{figure}[htbp]
\setlength{\unitlength}{1pt}
\begin{picture}(165,100)
\put(-60,0){\includegraphics{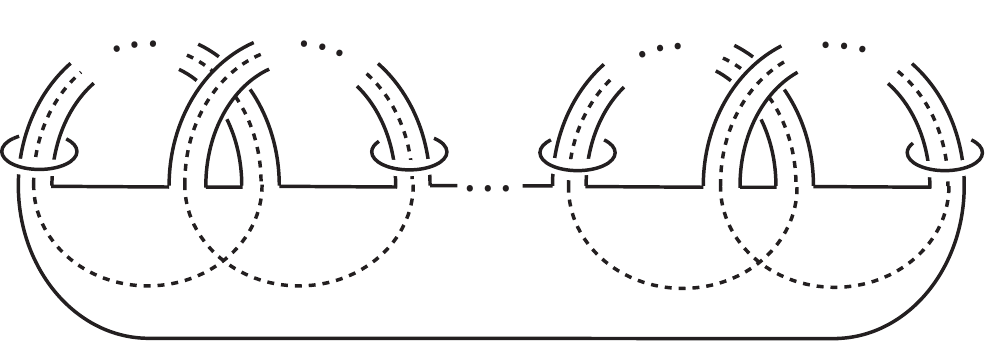}}
\put(-77,53){$\alpha_1$}
\put(-26,21){$d_1$}
\put(22,21){$b_1$}
\put(32,52){$\beta_1$}
\put(122,52){$\alpha_g$}
\put(129,22){$d_g$}
\put(176,22){$b_g$}
\put(228,51){$\beta_g$}
\end{picture}
\caption{Disk-band form for $\Sigma$}\label{fig:funkytwisty}
\end{figure}
Therefore,
\begin{defn}\label{rem:canonicalmap} If $J$ is associated to $Y$ (or $\Delta$) there is an associated \textbf{canonical map}
$$
f_{J,Y}:\pi_1(M_J)\to \mathcal{A}(Y),
$$
or, in the special case of a slice disk exterior, ($N(\Delta)$ being a tubular neighborhood of $\Delta$)
$$
f_{J,\Delta}:\pi_1(M_J)\to \mathcal{A}(B^4-N(\Delta)),
$$
defined by sending the $j^{th}$ meridian of $J$ to $i_*(\alpha_j)$. Note that this map is non-trivial as long as $\Delta_K(t)\neq 1$ since the image of $i_*$ has $\Q$-rank equal to half the $\Q$-rank of $\A(K)$. 
\end{defn}

We will phrase our answer to Question~\ref{quest:one} (and Questions~\ref{quest:two} and \ref{quest:three}) in terms of the zero-framed surgery manifold $M_J$. The philosophy of Levine (and Kauffman's conjecture) predicts that the answer to Question~\ref{quest:one} is that $J$ is itself a slice knot.  So we should keep in mind what conditions on $M_J$ \textit{would} hold were this conjecture true. Recall that $J$ is slice in a homology $4$-ball if and only if $M_J$ (which is a homology $S^1\times S^2$)  is the boundary of a  homology $S^1\times B^3$. It can easily be seen that this is equivalent to $M_J=\partial W$ where $H_2(W)=0$ and $H_1(M_J)\to H_1(W)$ is injective (excess $H_1$ can be eliminated by surgery). The reader should keep this condition in mind but, since we know that Kauffman's conjecture is false, we expect a weaker relation arises in reality.

%The reader will better understand our answer to Question~\ref{quest:one} if  we begin proving it before we state it.

The first steps  in answering Questions~\ref{quest:one} and \ref{quest:two} were already taken in ~\cite[Section 8]{CHL7}. There they defined a \textbf{fundamental cobordism}, $E$,  between $M_K$ and $M_J$. This is obtained from $M_K \times [0,1]$ by first adding zero-framed $2$-handles to $M_K \times \{1\}$ along each of the components of $J$. In the new boundary, $K$ bounds a disk, so is isotopic to the trivial knot. Thus, the new boundary is homeomorphic to $M_J\# S^1\times S^2$.  We can add a $3$-handle along the non-seperating $2$-sphere and the resulting $4$-manifold is called $E$. This cobordism captures the key relationship between $K$ and its derivative $J$. As in ~\cite[Lemma 2.5]{CHL3} and ~\cite[Section 8.1]{CHL7},  Mayer-Vietoris sequences and handle arguments establish the following elementary properties of $E$:

\begin{lem}\label{lem:Efacts} The fundamental cobordism $E$ between $M_K$ and $M_J$ satisfies
\begin{enumerate}
\item  $H_1(M_K)\cong H_1(E)\cong \Z$ is generated by $\mu_K$;
%\item [2)] $\mu_K ~~\text{generates} ~~H_1(E)$  ;
\item  $H_2(M_K)\to H_2(E)$ is the zero map;
\item  $H_1(M_J)\to H_1(E)$ is the zero map ;
\item $ H_2(M_J)\to H_2(E)$ is an isomorphism.
\end{enumerate}
\end{lem}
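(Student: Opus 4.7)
The plan is to exploit two handle decompositions of $E$: one ``from below'' (relative to $M_K$) and one ``from above'' (relative to $M_J$), and apply the long exact sequence of the pair in each case. From $M_K$'s side, $E$ is obtained from $M_K\times[0,1]$ by attaching $g$ two-handles along the components $d_1,\dots,d_g$ of $J\subseteq M_K\times\{1\}$, with their surface framings (which coincide with the zero framings since $\lnk(d_i,d_j^+)=0$), followed by one 3-handle along a non-separating 2-sphere in $M_J\#(S^1\times S^2)$. Dualizing, $E$ is also obtained from $M_J\times[0,1]$ by attaching one 1-handle followed by $g$ two-handles; the dual 2-handles attach along the meridians $\mu_{J_i}$ of the components of $J$. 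Consequently, in the relative homology one has $H_1(E,M_K)=0$, $H_2(E,M_K)\cong\Z^g$, $H_3(E,M_K)\cong\Z$, and dually $H_1(E,M_J)\cong\Z$, $H_2(E,M_J)\cong\Z^g$, with $H_3(E,M_J)=0$.

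For (1), the connecting homomorphism $H_2(E,M_K)\cong\Z^g\to H_1(M_K)\cong\Z\langle\mu_K\rangle$ sends the core of the handle attached along $d_i$ to $[d_i]=\lnk(d_i,K)\cdot\mu_K$. Since $d_i$ lies on a Seifert surface for $K$, $\lnk(d_i,K)=0$ and this boundary map vanishes. Combined with $H_1(E,M_K)=0$, the long exact sequence of $(E,M_K)$ forces $H_1(M_K)\to H_1(E)$ to be an isomorphism, giving (1).

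For (2), the generator of $H_2(M_K)\cong\Z$ is the closed surface $\hat F$ obtained by capping $F$ with the meridian disk coming from the $0$-surgery on $K$. The curves $d_1,\dots,d_g$ span a half-rank summand of $H_1(\hat F)\cong\Z^{2g}$, so performing ambient surgery on $\hat F$ inside $E$ using the $g$ two-handle cores produces a surface of genus $0$, i.e.\ a 2-sphere $S$ sitting in the upper boundary $M_J\#(S^1\times S^2)$ of $M_K\times[0,1]$ together with the 2-handles. The sphere $S$ is precisely the non-separating 2-sphere along which the 3-handle is attached, so the 3-handle exhibits $[S]=0$ in $H_2(E)$; since $\hat F$ and $S$ are homologous in $E$ via the surgery trace, $[\hat F]=0$ and (2) follows.

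Finally, (3) and (4) fall out of the long exact sequence of $(E,M_J)$ together with the dual handle description. The connecting map $H_2(E,M_J)\cong\Z^g\to H_1(M_J)\cong\Z^g$ sends the $i$-th dual 2-handle core to $\mu_{J_i}$, a basis element, so it is an isomorphism. This immediately yields (3): $H_1(M_J)\to H_1(E)$ has image equal to the cokernel of an isomorphism and is therefore zero. Moreover, with $H_3(E,M_J)=0$, the same sequence gives $H_2(M_J)\xrightarrow{\cong} H_2(E)$, proving (4). The single subtle step in the whole argument is the ambient surgery in (2) --- verifying that the framings match up so that surgery on $\hat F$ with the prescribed 2-handle cores is well-defined and that the resulting sphere is indeed the one the 3-handle is designed to kill; everything else is a bookkeeping exercise in the two long exact sequences.
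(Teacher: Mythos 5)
Your argument is correct and is the same in spirit as what the paper invokes (``Mayer--Vietoris sequences and handle arguments,'' with references to \cite{CHL3,CHL7}), so in effect you are supplying the details the paper leaves implicit. The two-handle-decomposition strategy, the two long exact sequences, the surgery-trace identification $[\hat F]=[S]$, and the Lefschetz-dual reading off of $H_*(E,M_J)$ are all the expected route.

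One point you assert but do not verify: the relative groups $H_2(E,M_K)\cong\Z^g$, $H_3(E,M_K)\cong\Z$ (and dually $H_1(E,M_J)\cong\Z$, $H_2(E,M_J)\cong\Z^g$, $H_3(E,M_J)=0$) require the internal boundary map $\partial_3\colon C_3\to C_2$ of the relative handle chain complex of $(E,M_K)$ to vanish, i.e.\ that the attaching $2$-sphere $S$ of the $3$-handle has algebraic intersection number zero with each belt circle $\mu_{d_i}$. This is true, and for essentially the same geometric reason that powers your step (2): $S$ passes through the new solid torus of the $d_i$-surgery in a pair of oppositely oriented meridian disks (coming from the two sides of the annular neighborhood of $d_i\subseteq F$ that is removed in the ambient surgery), so $S\cdot\mu_{d_i}=+1-1=0$. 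It would be worth making this explicit, since your statements about $H_*(E,M_K)$ and $H_*(E,M_J)$ depend on it; alternatively, for (3) and (4) you can sidestep the identification of the connecting map with ``hits $\mu_{J_i}$'' entirely and just chase the sequence $0\to H_2(M_J)\to H_2(E)\to H_2(E,M_J)\to H_1(M_J)\to H_1(E)\to H_1(E,M_J)\to 0$ using the ranks (once the relative groups are established) together with $H_0(M_J)\xrightarrow{\cong}H_0(E)$, which forces $H_1(E)\to H_1(E,M_J)$ to be an isomorphism of $\Z$'s, hence $\partial$ surjective, hence an isomorphism of $\Z^g$'s, and then (3) and (4) drop out.
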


 Now if  $K$ is slice (or $n$-solvable) then $M_K$ bounds a 4-manifold $B^4-N(\Delta)$ (or the $n$-solution $Y$)  which may be glued to $E$ along $M_K$, resulting in a $4$-manifold $V$ whose boundary is $M_J$.  The attributes of this manifold $V$ are described by the proceeding lemma:

\begin{lem}
[Lemma 8.2 and Proposition 8.4 of \cite{CHL7}]

\label{lem:Vfacts} If $K$ admits a  slice disk $\Delta$ and $J$ is a derivative link associated to $\Delta$, then there is a compact, connected, oriented $4$-manifold $V$ with $\bdry V = M(J)$ such that
\begin{enumerate}
\item \label{meridianInV}  The meridian of the $i^{th}$ component of $J$ in $M_{J}$ is isotopic in $V$ to $\alpha_i$ (see Figure~\ref{fig:funkytwisty});
\item  $H_1(V;\mathbb{Z})\cong \Z$ is generated by $\mu_K$;
\item  $H_1(M_J;\mathbb{Z})\to H_1(V;\mathbb{Z})$ is the zero map;
\item\label{Pi1VWhenSlice}  $\pi_1(V)\cong \frac{\pi_1(B^4-N(\Delta))}{<i_*(d_1),...,i_*(d_g)>}$, so $\frac{\pi_1(V)}{\pi_1(V)^{(2)}_r}\cong \frac{\pi_1(B^4-N(\Delta))}{\pi_1(B^4-N(\Delta))_r^{(2)}}$, and
\item  \label{H2VWhenSlice}$H_2(V;\mathbb{Z})/i_*(H_2(\partial V;\mathbb{Z}))=0$.
\end{enumerate}
If $K$ is $n$-solvable with $n$-solution $Y$ ($n\in \N\cup\{0\}$) then (\ref{Pi1VWhenSlice}) and (\ref{H2VWhenSlice}) are replaced with 
\begin{enumerate}
\item[(\ref{Pi1VWhenSlice}')]  $\pi_1(V)\cong \frac{\pi_1(Y)}{<i_*(d_1),...,i_*(d_g)>}$, so $\frac{\pi_1(V)}{\pi_1(V)^{(2)}_r}\cong \frac{\pi_1(Y)}{\pi_1(Y)_r^{(2)}}$,
\item [(\ref{H2VWhenSlice}')]$H_2(V;\mathbb{Z})/i_*(H_2(\partial V;\mathbb{Z}))\cong \Z^{2k}$ has a basis consisting of embedded surfaces $\{L_j, D_j\}$ which are disjoint except that $L_j$ intersects $D_j$ transversely in a single point.  
\item [(\ref{H2VWhenSlice}'a)] $\im(\pi_1(D_j)\to \pi_1(V))$ and $\im(\pi_1(L_j)\to \pi_1(V))$ are each contained in $\pi_1(V)^{(n)}_r$.
\item [(\ref{H2VWhenSlice}'b)] If $Y$ is an $n.5$-solution then additionally
 $\im(\pi_1(L_j)\to \pi_1(V))\subseteq\pi_1(V)^{(n+1)}_r$.
\end{enumerate}
\end{lem}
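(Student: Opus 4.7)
The plan is to construct $V$ directly by gluing: set $Y := B^4 \setminus N(\Delta)$ in the slice case (or take $Y$ to be the given $n$-solution in the second case) and define $V := Y \cup_{M_K} E$, where $E$ is the fundamental cobordism of Lemma~\ref{lem:Efacts}. Since $\partial E = M_K \sqcup M_J$ and we cap off along $M_K$, this yields $\partial V = M_J$. Properties (2), (3), (\ref{H2VWhenSlice}), and (\ref{H2VWhenSlice}') are then Mayer-Vietoris computations on the decomposition $V = Y \cup_{M_K} E$ using Lemma~\ref{lem:Efacts} as input: for (2), $H_1(Y) \cong H_1(M_K) \cong \mathbb{Z}\langle \mu_K \rangle$ and the analogous statement for $E$ force $H_1(V) \cong \mathbb{Z}\langle \mu_K \rangle$; for (3), the map $H_1(M_J) \to H_1(E)$ is already zero; for (\ref{H2VWhenSlice}), $H_2(Y) = 0$ together with the vanishing of $H_2(M_K) \to H_2(E)$ collapses the sequence to the desired conclusion; for (\ref{H2VWhenSlice}'), the Lagrangian/dual surface basis of $H_2(Y)/i_*(H_2(M_K))$ passes to a basis of $H_2(V)/i_*(H_2(\partial V))$, using that $H_2(E)$ is absorbed by its boundary.

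For the fundamental group statements (\ref{Pi1VWhenSlice}) and (\ref{Pi1VWhenSlice}'), I would apply van Kampen to $V = Y \cup_{M_K} E$. The $2$-handles of $E$ kill precisely $[d_1], \ldots, [d_g]$ and the subsequent $3$-handle leaves $\pi_1$ unchanged, so $\pi_1(E) = \pi_1(M_K)/\langle\langle d_1, \ldots, d_g\rangle\rangle$. Combined with the surjectivity of $\pi_1(M_K) \to \pi_1(Y)$ (standard for $Y$ built from $M_K \times I$ by handles of index $\geq 2$), van Kampen yields $\pi_1(V) \cong \pi_1(Y)/\langle\langle i_*(d_1), \ldots, i_*(d_g)\rangle\rangle$. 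Since each $i_*(d_j)$ lies in $\pi_1(Y)^{(2)}_r$ by equation~(\ref{eq:dsecondderived}), killing them does not alter the second derived quotient, giving the displayed isomorphism $\pi_1(V)/\pi_1(V)^{(2)}_r \cong \pi_1(Y)/\pi_1(Y)^{(2)}_r$. Conditions (\ref{H2VWhenSlice}'a) and (\ref{H2VWhenSlice}'b) transfer from $Y$ to $V$ using functoriality of the rational derived series under the surjection $\pi_1(Y) \twoheadrightarrow \pi_1(V)$, since the Lagrangian and dual surfaces and their fundamental groups live inside $Y \subseteq V$.

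Finally, I would prove property (\ref{meridianInV}) by a direct geometric argument. Inside $E$, the cocore of the $0$-framed $2$-handle attached along $d_i$ provides a disk exhibiting $\mu_{J_i}$ (the meridian of $J_i$ in $M_J$) as isotopic in $E$ to $\mu_{d_i}$ (the meridian of $d_i$ in $M_K$). Inside $Y$, I would then isotope $\mu_{d_i}$ off $F$ into $S^3 \setminus F$; this pushoff agrees with $\alpha_i$ up to curves that bound in $Y$, so the two are isotopic in $V$. I expect this meridian-tracking step to be the main obstacle: the homology and $\pi_1$ arguments are largely bookkeeping once Lemma~\ref{lem:Efacts} is in hand, but pinning down the precise isotopy class of the meridian requires a careful geometric argument distinguishing $\alpha_i$ from the other dual basis elements and verifying that lower-order differences die in $V$.
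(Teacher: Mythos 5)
Your approach reproduces the paper's construction: the sentence immediately preceding the lemma states that $V$ is obtained by gluing $Y$ (the slice disk exterior or $n$-solution) to the fundamental cobordism $E$ along $M_K$, and then the paper defers the verification of the listed properties to \cite{CHL7}. Your plan of attack (Mayer--Vietoris on $V = Y \cup_{M_K} E$ together with Lemma~\ref{lem:Efacts}, van Kampen for $\pi_1$, and tracing meridians through the $2$-handle cocore) is the standard one and is sound. Two small points worth cleaning up: for the van Kampen step you do not actually need $\pi_1(M_K)\to\pi_1(Y)$ to be surjective (and for a general $n$-solution $Y$, which need not be built from $M_K\times I$ by handles, it may fail); surjectivity of $\pi_1(M_K)\to\pi_1(E)$ alone gives $\pi_1(Y) *_{\pi_1(M_K)} \pi_1(E) \cong \pi_1(Y)/\langle\langle i_*(d_1),\dots,i_*(d_g)\rangle\rangle$. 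And the meridian step can be stated more sharply than \emph{agrees up to curves that bound}: a radial annulus $\gamma([0,1])\times\partial D^2$ inside the $2$-handle $D^2\times D^2$ carries the belt circle (the meridian of $J_i$ in $M_J$) to the meridian $\mu_{d_i}$ in $M_K$, and then $\mu_{d_i}$ is isotopic to $\alpha_i$ already in $S^3 - d_i \subset M_K$, since the band of $d_i$ deformation retracts to its core; both isotopies take place inside $V$, which gives property~(\ref{meridianInV}) directly without invoking anything about what bounds in $Y$.
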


%This result already appears in the literature, so we restrict ourselves to The first part of property $5$) in the preceding lemmas follows directly from the fact that $V$ is obtained from $B^4-N(\Delta)$ (or $Y$) by adding $2$-handles along the components of $J$ and then adding a $3$-handle. The second part follows from Equation~??\ref{eq:dsecondderived}.
 A key new feature of $V$ is that the map $H_1(M_J)\to H_1(V)$ is the zero map.  Notice, however,  that as a consequence of result (\ref{meridianInV}), (\ref{Pi1VWhenSlice}), and (\ref{Pi1VWhenSlice}') of Lemma~\ref{lem:Vfacts}, and the observation in Definition~\ref{rem:canonicalmap}, the map $\pi_1(M(J))\to \frac{\pi_1(V)}{\pi_1(V)^{(2)}_r}$ factors non-trivially (provided that $\Delta_K(t)\neq 1$) through the abelianization map $\pi_1(M(J))\to H_1(M(J))$,  and has image contained in $\frac{\pi_1(V)^{(1)}_r}{\pi_1(V)^{(2)}_r}$ which injects into  $\A(V)$.  Motivated by this result we present some definitions:
 
 \begin{defn}[Compare to Definition 8.3 and Section 10 of \cite{CHL7}]\label{defn:NullBordism}
 For a knot $J$, we will call a 4-manifold $V$ with $\bdry V = M(J)$ a \textbf{null-bordism} for $J$ if
 \begin{enumerate}
 \item $H_1(V) = \Z$
 \item $H_1(M(J))\to H_1(V)$ is the zero map.
 \item \label{NullBordismH2}$H_2(V;\mathbb{Z})/i_*(H_2(\partial V;\mathbb{Z}))=0$.
 \end{enumerate}
  For $n\in\N\cup \{0\}$, we call $V$ an \textbf{$n$-null-bordism} if condition (\ref{NullBordismH2}) is replaced by 
  \begin{enumerate}
 \item [(\ref{NullBordismH2}')]  $H_2(V;\mathbb{Z})/H_2(\partial V;\mathbb{Z})\cong \Z^{2k}$ has a basis consisting of embedded surfaces $\{L_j, D_j\}$ which are disjoint except that $L_j$ intersects $D_j$ transversely in a single point.  
\item 
[(\ref{NullBordismH2}'a)]
\label{nBordism} $\im(\pi_1(D_j)\to \pi_1(V))$ and $\im(\pi_1(L_j)\to \pi_1(V))$ are each contained in $\pi_1(V)^{(n)}_r$.
 \end{enumerate}
 $V$ is an $n.5$-null-bordism if additionally
 \begin{enumerate}
\item [(\ref{NullBordismH2}'b)]\label{n.5Bordism} $\im(\pi_1(L_j)\to \pi_1(V))\subseteq\pi_1(V)^{(k+1)}_r$.
 \end{enumerate}
 We call $V$ \textbf{non-degenerate} if the map $H_1(M(J))\to \A(V)$ is not the zero homomorphism.
 \end{defn}

In \cite{CHL7} the notation ``height 1'' is used where we say ``non-degenerate.''  These 4-manifolds give already a partial answer to Questions~\ref{quest:one} and \ref{quest:two}:  

\begin{prop}\label{prop:NullBordismDeriv}If $K$ is slice and $J$ is a derivative associated with a slice disk.  Then $M_J$ bounds a null-bordism, $V$.  Moreover $K$ has Alexander polynomial $\Delta_K(t) = \delta(t)\delta(t^{-1})$ for some polynomial $\delta(t)$ such that the image of $H_1(V;\Z)\to \A(V)$ is contained in the $\delta(t)$-torsion submodule.    If $K$ is instead $n$-solvable (or $n.5$-solvable) with $n\in \N$ then $V$ is an $n$-null-bordism (or $n.5$-null-bordism).  If $\Delta_K\neq 1$, then $V$ is non-degenerate.
\end{prop}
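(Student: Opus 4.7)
The plan is to take $V$ directly from Lemma~\ref{lem:Vfacts} and then verify piece-by-piece that it satisfies the conditions of Definition~\ref{defn:NullBordism}. For the slice case, items (2), (3), and (\ref{H2VWhenSlice}) of Lemma~\ref{lem:Vfacts} are exactly the defining properties (1)--(3) of a null-bordism. For the $n$-solvable (resp.\ $n.5$-solvable) case, items (\ref{H2VWhenSlice}$'$) together with (\ref{H2VWhenSlice}$'$a) (resp.\ also (\ref{H2VWhenSlice}$'$b)) of Lemma~\ref{lem:Vfacts} are precisely the definition of an $n$-null-bordism (resp.\ $n.5$-null-bordism). So this half is essentially bookkeeping.

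For the Fox--Milnor half, I would first invoke the classical Fox--Milnor factorization: the kernel $P_\Delta = \ker(\A(K) \to \A(B^4 - N(\Delta)))$ is a Lagrangian submodule of $\A(K)$ with respect to the Blanchfield form, whose order I call $\delta(t)\in\Q[t,t^{-1}]$. The quotient $\A(K)/P_\Delta$ then has order $\delta(t^{-1})$ (up to units $\pm t^k$), and consequently $\Delta_K(t) \doteq \delta(t)\delta(t^{-1})$. Next I would identify $\A(V)$ with $\A(B^4-N(\Delta))$: by Lemma~\ref{lem:Vfacts}(\ref{Pi1VWhenSlice}) we have $\pi_1(V)/\pi_1(V)^{(2)}_r \cong \pi_1(B^4-N(\Delta))/\pi_1(B^4-N(\Delta))^{(2)}_r$, so after tensoring with $\Q$ we obtain $\A(V)\cong \A(B^4 - N(\Delta))$ as $\Q[t,t^{-1}]$-modules. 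Now by Definition~\ref{rem:canonicalmap} together with Lemma~\ref{lem:Vfacts}(\ref{meridianInV}), the image of the composite $H_1(M_J;\Z)\to H_1(V;\Z[t,t^{-1}])\to \A(V)$ (reading the statement of the proposition as applying to $H_1$ of the boundary, which is where the canonical map is defined) is generated by $\{i_*(\alpha_1),\dots,i_*(\alpha_g)\}$. By the description recalled just before Definition~\ref{rem:canonicalmap}, these elements span the image of $\A(K)\to \A(B^4-N(\Delta))$, which equals $\A(K)/P_\Delta$ and is thus annihilated by $\delta(t^{-1})$ (relabeling $\delta$ by $\delta(t^{-1})$ if desired so that the image lies in the $\delta(t)$-torsion submodule, which is permissible since $\Delta_K\doteq\delta(t)\delta(t^{-1})$ is symmetric in this dichotomy).

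For the final non-degeneracy claim, I would observe that if $\Delta_K(t)\neq 1$, then $\A(K)/P_\Delta$ has positive $\Q$-rank (equal to half the $\Q$-rank of $\A(K)$, since $P_\Delta$ is Lagrangian), so the image of the canonical map is non-trivial; this is precisely the content of the concluding sentence of Definition~\ref{rem:canonicalmap}, transferred through the isomorphism $\A(V)\cong \A(B^4-N(\Delta))$. The $n$-solvable variant proceeds identically with $Y$ in place of $B^4 - N(\Delta)$, using the primed items of Lemma~\ref{lem:Vfacts}, once one notes that $\ker(\A(K)\to \A(Y))$ is still Lagrangian (this is built into the definition of an $n$-solution for $n\ge 1$; see \cite{COT}).

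The main obstacle is the bookkeeping around Fox--Milnor: one needs to make sure that $\delta(t)$ is chosen on the correct side of the factorization, i.e.\ that the image, rather than the kernel, is being annihilated. Everything else is a direct application of the lemmas already assembled above.
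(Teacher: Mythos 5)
Your proof is correct and follows exactly the route the paper intends: the paper states Proposition~\ref{prop:NullBordismDeriv} without an explicit proof environment, treating it as an immediate consequence of Lemma~\ref{lem:Vfacts}, Definition~\ref{defn:NullBordism}, and the discussion surrounding Definition~\ref{rem:canonicalmap}, which is precisely the material you assemble. You also correctly read ``$H_1(V;\Z)\to\A(V)$'' as referring to the composite $H_1(M_J;\Z)\to\A(V)$ (the map actually constructed in Definition~\ref{rem:canonicalmap}), and your handling of the $\delta(t)$ versus $\delta(t^{-1})$ ambiguity via Blanchfield duality of $P_\Delta$ and $\A(K)/P_\Delta$ is the right way to reconcile the Fox--Milnor factorization with the torsion claim.
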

Before we refine this answer (in the next section) we perform an analogous construction in the situation of Question~\ref{quest:three}.

The first step is to see that there is a \textit{fundamental infection cobordism}, $E$, between $M_{R_\eta(J)}$ and $M_J\sqcup M_R$.  This cobordism is obtained by starting with ($M_R\times [0,1])\sqcup (-M_J\times [0,1])$ and identifying a tubular neighborhood of $\eta$ in $M_R\times \{1\}$ with a tubular neighborhood of $J$ in $M_J\times \{0\}$ in such a way that a longitude of $\eta$ is identified to a meridian of $J$ and a meridian of $\eta$ is identified to the reverse of a longitude of $J$. Then $\partial E=M_{R_\eta(J)}\sqcup -M_R\sqcup -M_J$.  Its important properties are summarized as:
\begin{enumerate}
\item $H_1(E)\cong \Z$ is generated by the meridian of $R$, which is isotopic in $E$ to the meridian of $R_\eta(J)$.
\item The maps $\A(R)\to \A(V)\ot \A(R_\eta(J))$ are isomorphisms and the composition $\A(R)\to \A(R_\eta(J))$ preserves the Blanchfield form.  
\item In $V$, the meridian of $J$ is isotopic to $\eta$, which is null-homologous, so $H_1(M_J)\to H_1(V)$ is the zero homomorphism.
\end{enumerate}

Since the maps $\A(R)\to \A(V)\ot \A(R_\eta(J))$ induced by inclusion into $V$ are isomorphisms, we may consistently think of a submodule $P\subseteq \A(R_\eta(J))$ as a submodule of $\A(R)$.  

Let $\Delta$ be a slice disk for $R$.  Let $Y$ be a slice disk complement (or $n$-solution) for $R_\eta(J)$.   To get a null-bordism (or $n$-null-bordism) bounded by $M_J$ glue to $E$ copies of $E(\Delta)$ and $Y$.  Call this $4$-manifold $V$.   In the case the $R_\eta(J)$ is slice, this amounts to gluing homology $S^1\times D^3$'s to homology $S^1\times S^2$'s.  This operation has no effect on $H_1$ and kills the second homology generated by the boundary components used for the gluing.  Thus, if $R_\eta(J)$ is slice then $V$ is a null-bordism.  In the case $Y$ is an $n$-solution for $R_\eta(J)$, it follows that $H_2(V;\mathbb{Z})/i_*(H_2(\partial V;\mathbb{Z}))\cong H_2(Y, \bdry Y)$ so that we can conclude that $V$ is an $n$-null-bordism.

Finally, observe that by a Mayer-Veitoris argument, 
$$\ker(\A(R)\to \A(V)) = \ker(\A(R)\to \A(E(\Delta)))+\ker(\A(R)\to \A(E\cup Y)) = P_\Delta+P_Y.$$
If $\eta\notin P_\Delta+P_Y$ then $V$ is non-degenerate.  To summarize:

\begin{prop}\label{Prop:NullBordismInfection}%[Reference Derivatives paper?]
Suppose that $R_\eta$ is a doubling operator, that $R$ is slice with corresponding Lagrangian $P_\Delta\subseteq \A(R)$ and that $R_\eta(J)$ is slice (or $n$-solvable or $n.5$-solvable) with slice disk complement (or $n$-solution or $n.5$-solution) $Y$.  Then $J$ bounds a null-bordism (or $n$-null-bordism or $n.5$-null-bordism) $V$.  In $\A(V)$ the meridian of $J$ is annihilated by $\Delta_R(t)$.  Moreover the map $H_1(M(J))\to \frac{\pi_1(V)}{\pi_1(V)^{(2)}_r}$ factors through $H_1(M(J))\underset{\mu_J\to \eta}{\longrightarrow}\dfrac{\A(R)}{P_\Delta+P_Y}\into\frac{\pi_1(V)}{\pi_1(V)^{(2)}_r}$.  Thus, if $\eta\notin P_\Delta+P_Y$ then $V$ is non-degenerate.
\end{prop}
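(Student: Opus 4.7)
The plan is to verify that the $4$-manifold $V := E(\Delta) \cup_{M_R} E \cup_{M_{R_\eta(J)}} Y$ described just before the proposition has all of the advertised properties. The argument amounts to three nested Mayer--Vietoris computations: one in integral homology for the $H_1$ and $H_2$ statements, one over $\Q[t,t^{-1}]$ (twisted by $\pi_1(V) \to H_1(V) = \Z$) for the Alexander-module statements, and a functoriality argument for the rational derived series in the $n$- and $n.5$-solvable refinements.

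First I would verify the three properties of the fundamental infection cobordism $E$. Decomposing $E = (M_R \times I) \cup_T (M_J \times I)$ along the gluing torus $T$, a Mayer--Vietoris sequence together with $\lnk(\eta, R)=0$ gives $H_1(E) = \Z$ generated by $\mu_R$ and shows that $\mu_J$ is carried to a longitude of $\eta$, which is isotopic to $\eta$ itself and hence null-homologous in $M_R$. Twisting coefficients via the abelianization then yields $\A(R) \cong \A(E) \cong \A(R_\eta(J))$ with Blanchfield forms preserved --- this is the standard homological effect of a satellite operation. Now attaching $E(\Delta)$ along $M_R$ and $Y$ along $M_{R_\eta(J)}$, two further Mayer--Vietoris computations give $H_1(V) = \Z$ (all three pieces have $H_1 = \Z$ with compatible meridians); in the slice case both glued pieces are rational $S^1 \times D^3$'s so $H_2(V)/i_*(H_2(\bdry V)) = 0$, while in the $n$-solvable case the same computation gives $H_2(V)/i_*(H_2(\bdry V)) \cong H_2(Y)/i_*(H_2(\bdry Y))$, so the Lagrangian/dual basis for $Y$ transports directly to one for $V$. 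The condition $\pi_1(L_j), \pi_1(D_j) \subseteq \pi_1(V)^{(n)}_r$ (and $\pi_1(L_j) \subseteq \pi_1(V)^{(n+1)}_r$ in the $n.5$ case) is inherited from the corresponding condition in $Y$ by functoriality of the rational derived series under $\pi_1(Y) \hookrightarrow \pi_1(V)$.

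Finally, the Alexander-module content comes from a third Mayer--Vietoris over $\Q[t,t^{-1}]$ on the decomposition $V = E(\Delta) \cup E \cup Y$, exactly as the text sketches, producing
$$\ker(\A(R) \to \A(V)) = \ker(\A(R) \to \A(E(\Delta))) + \ker(\A(R_\eta(J)) \to \A(Y)) = P_\Delta + P_Y,$$
where step 1 is used to consistently identify $\A(R_\eta(J))$ with $\A(R)$. This yields the factorization of $H_1(M_J) \to \pi_1(V)/\pi_1(V)^{(2)}_r$ through $\A(R)/(P_\Delta+P_Y)$ (the map lands in the torsion-free quotient $\pi_1(V)^{(1)}_r/\pi_1(V)^{(2)}_r$ and is computed by the composite sending $\mu_J \mapsto \eta$), and hence non-degeneracy whenever $\eta \notin P_\Delta + P_Y$. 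Annihilation of $\mu_J$ by $\Delta_R(t)$ in $\A(V)$ is automatic since every element of $\A(R)$ is $\Delta_R(t)$-torsion. The main technical obstacle throughout is bookkeeping --- ensuring that the meridian/longitude identifications in the satellite gluing, the compatibilities of the $H_1 = \Z$'s across the three pieces, and the rational-derived-series conditions on the Lagrangian surfaces all assemble consistently under the Mayer--Vietoris sequences.
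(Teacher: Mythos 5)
Your proposal follows the paper's own proof essentially verbatim: construct the fundamental infection cobordism $E$, verify its properties via Mayer--Vietoris, form $V = E(\Delta)\cup_{M_R} E\cup_{M_{R_\eta(J)}} Y$, and then read off the null-bordism properties, the Lagrangian/dual surfaces, and $\ker(\A(R)\to\A(V)) = P_\Delta + P_Y$ via further Mayer--Vietoris computations and functoriality of the rational derived series. Two small notational slips --- the intersection piece in the decomposition of $E$ is a solid torus rather than a torus $T$, and $\pi_1(Y)\to\pi_1(V)$ need not be injective (though functoriality of $(-)^{(n)}_r$ requires only a homomorphism) --- do not affect the argument.
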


Thus, our answers to the Questions \ref{quest:one}, \ref{quest:two} and \ref{quest:three} are that $M(J)$ bounds a null-bordism or $n$-null-bordism as appropriate.  With some assumptions about the Alexander module, we can further conclude that this null-bordism is non-degenerate.  Before we move on to the next section in which we cut null-bordisms open we explore their higher order properties.  

\subsection{Higher order properties of $n$-null-bordisms}

In general the groups employed in this paper are PTFA (Poly-Torsion-Free-Abelian) groups.  A group $\Gamma$ is PTFA if for some $k\in \N$, $\Gamma^{(k)}_r=\{1\}$ is the trivial group.  As we mentioned in the introduction if $\Gamma$ is PTFA then $\Q[\Gamma]$ is an Ore domain and so embeds in a skew-field of fractions $\K(\Gamma)$. See\cite[Chapter 2]{ste}) and \cite[Proposition 32]{C}.)

If $V$ is an $n$-null-bordism and $\phi:\pi_1(V)\to \Gamma$ is a homomorphism to a PTFA group, $\Gamma$, with $\Gamma^{(n)}=1$ then since $\pi_1(L_i)$ and $\pi_1(D_i)$ each sit in $\pi_1(V)^{(n)}$, $L_i$ and $D_i$ lift to the $\Gamma$-cover of $V$, and so represent elements of $H_2(W;\Q[\Gamma])$.  Additionally,  in \cite[Lemma 5.10]{CHL3} the proposition below is proven, where $\beta_*(X)$ refers to the $\Q$-rank of $H_*(X;\Q)$ and $\beta_*^\Gamma(X)$ is the $\K(\Gamma)$-rank of $H_*(X;\K(\Gamma))$.  Importantly, according to  \cite[Proposition 3.10]{C} the Betti number hypothesis is satisfied whenever $\bdry V$ is zero surgery on a knot and $\phi$ is non-trivial on $\pi_1(\bdry V)$.

\begin{prop}[Lemma 5.10 of \cite{CHL3}]\label{highOrderSoln}
Let $n\in \N$.  Suppose that $V$ is an $n$-null-bordism with Lagrangians $\{L_1,\dots, L_k\}$ and duals $\{D_1,\dots, D_k\}$ and $\phi:\pi_1(V)\to \Gamma$ is a homomorphism to a PTFA group with $\Gamma^{(n)}=1$.  Suppose also that either
$\beta_1^\Gamma(\bdry V) = \beta_1(\bdry V)-1$ or the homomorphism $\pi_1(\bdry V)\to \Gamma$ is trivial.  
Then the collection of lifts $\{\widetilde L_1, \dots\widetilde L_k, \widetilde D_1, \dots\widetilde D_k\}$ forms a basis for $\dfrac{H_2(V;\K(\Gamma))}{H_2(\bdry V;\K(\Gamma))}$
\end{prop}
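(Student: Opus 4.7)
The plan is to combine a lifting argument with an Euler-characteristic and duality rank count over the skew field $\K(\Gamma)$, and to finish with a linear-independence check via the equivariant intersection form.

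First, I would verify that the classes $[\widetilde L_i]$ and $[\widetilde D_i]$ in $H_2(V;\K(\Gamma))$ are well defined. Since $\Gamma$ is PTFA, its rational derived series coincides with its ordinary derived series, so $\Gamma^{(n)}_r = \Gamma^{(n)} = 1$. By functoriality of the rational derived series, the composition $\pi_1(L_i) \to \pi_1(V) \to \Gamma$ factors through $\pi_1(V)^{(n)}_r \to \Gamma^{(n)}_r = 1$ and so is trivial; the same is true for $\pi_1(D_i)$. Hence each $L_i$ and $D_i$ lifts to the $\Gamma$-cover $\widetilde V$, and such a lift, viewed as a cycle in $C_*(\widetilde V) \otimes_{\Z[\Gamma]} \K(\Gamma)$, produces the desired class.

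Second, I would show that $\dim_{\K(\Gamma)} \bigl(H_2(V;\K(\Gamma))/H_2(\partial V;\K(\Gamma))\bigr) = 2k$, matching the rational count. Invariance of Euler characteristic under change of coefficients gives $\chi(V) = \sum_i (-1)^i \beta_i^\Gamma(V)$. When $\phi$ is nontrivial on $\pi_1(V)$ one has $\beta_0^\Gamma(V) = 0$ (since $\gamma-1$ is a unit in the skew field for any nontrivial $\gamma$ in the image), and $\beta_4^\Gamma(V) = 0$ because $V$ has nonempty boundary. Poincar\'e--Lefschetz duality over $\K(\Gamma)$ gives $\beta_3^\Gamma(V) = \beta_1^\Gamma(V,\partial V)$, and the long exact sequence of $(V,\partial V)$ expresses this in terms of $\beta_*^\Gamma(V)$ and $\beta_*^\Gamma(\partial V)$. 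The Betti-number hypothesis is exactly the input needed to pin down $\beta_1^\Gamma(\partial V)$: in the nontrivial horn it equals $\beta_1(\partial V)-1$, while in the trivial horn the $\Gamma$-cover of $\partial V$ is a disjoint union of copies of $\partial V$, so $\K(\Gamma)$-homology agrees with $\Q$-homology tensored up. Either way the bookkeeping collapses to $\beta_2^\Gamma(V)-\beta_2^\Gamma(\partial V)=2k$.

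Third, I would verify linear independence. After translating by deck elements so that each paired $\widetilde L_i, \widetilde D_i$ meets transversely in a single point, the equivariant intersection pairing $H_2(V;\K(\Gamma)) \times H_2(V,\partial V;\K(\Gamma)) \to \K(\Gamma)$ sends $(\widetilde L_i, \widetilde D_i)$ to $\pm 1$ and all other cross-terms among the $2k$ chosen classes to $0$, since downstairs the Lagrangians are pairwise disjoint and so are the duals, and an $L_i$ meets a $D_j$ with $i\neq j$ in no points. The associated $2k \times 2k$ matrix is a direct sum of hyperbolic blocks, hence nonsingular over $\K(\Gamma)$; since $H_2(\partial V;\K(\Gamma))$ pairs trivially with $H_2(V,\partial V;\K(\Gamma))$, the lifts stay independent modulo the boundary. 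Combined with the rank count this establishes that they form a basis.

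I expect the bookkeeping in step two to be the main obstacle, as every term in the long exact sequence must be tracked and the Betti-number hypothesis converted cleanly into a dimension equality. A secondary subtlety lies in arranging the lifts in step three so that the intersection numbers are exactly as claimed; translating one lift by an appropriate deck element is what realizes the hyperbolic block and prevents spurious $\K(\Gamma)$-cross-terms.
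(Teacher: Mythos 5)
The paper itself does not reprove this proposition (it cites Lemma 5.10 of CHL3), but it does prove the closely analogous Proposition~\ref{highOrderSoln.5} and uses the same rank-count-plus-intersection-form argument in Proposition~\ref{prop:IsectnForm} and Theorem~\ref{thm:nIsectnForm}; your outline matches that style closely. The lifting step and the Euler characteristic/duality bookkeeping are in order, though you should make the three sub-cases explicit the way the paper does in the proof of Proposition~\ref{highOrderSoln.5}: $\pi_1(V)\to\Gamma$ trivial (immediate), $\pi_1(\bdry V)\to\Gamma$ trivial but $\pi_1(V)\to\Gamma$ nontrivial (here $\beta^\Gamma_3(V)=1$ rather than $0$), and $\pi_1(\bdry V)\to\Gamma$ nontrivial with the Betti number hypothesis.

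One claim in your independence step is stated wrongly and should be repaired. You write that ``$H_2(\partial V;\K(\Gamma))$ pairs trivially with $H_2(V,\partial V;\K(\Gamma))$.'' This is false in general: the pairing $H_2(V;\K(\Gamma))\times H_2(V,\partial V;\K(\Gamma))\to\K(\Gamma)$ coming from Poincar\'e--Lefschetz duality is \emph{nonsingular}, so nothing in $H_2(V;\K(\Gamma))$, including the image of $H_2(\partial V;\K(\Gamma))$, is annihilated by it unless it is zero. The correct statement, and the one the paper's argument in Proposition~\ref{prop:IsectnForm} silently uses, is that the image of $H_2(\partial V;\K(\Gamma))$ in $H_2(V;\K(\Gamma))$ is precisely the radical of the \emph{self}-pairing $H_2(V;\K(\Gamma))\times H_2(V;\K(\Gamma))\to\K(\Gamma)$ (equivalently, the kernel of $H_2(V;\K(\Gamma))\to H_2(V,\partial V;\K(\Gamma))$). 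With that correction the argument proceeds: a $\K(\Gamma)$-linear combination of the lifts lying in $\im H_2(\partial V;\K(\Gamma))$ must pair to zero against each $\widetilde L_j$ and $\widetilde D_j$, and the $2k\times 2k$ Gram matrix of the lifts is nonsingular (the off-diagonal blocks are identity blocks coming from $L_i\cap D_i$ after translating $\widetilde D_i$ by a deck element, and the matrix is block upper/lower triangular with those identity blocks once one uses that the Lagrangian and dual classes have vanishing self-intersection in the COT sense). You should either invoke that vanishing explicitly or, more robustly, observe that the Gram matrix has integer entries and is the intersection matrix of a $\Z$-basis for $H_2(V;\Z)/H_2(\partial V;\Z)$, on which the induced form is nonsingular, so its determinant is a nonzero rational and hence a unit in $\K(\Gamma)$.
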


In the case that $V$ is instead an $n.5$-null-bordism and $\Gamma^{(n+1)}=1$ we get a weaker result.

\begin{prop}\label{highOrderSoln.5}
Let $n\in \N\cup\{0\}$.  Suppose that $V$ is an $n.5$-null-bordism with Lagrangians $\{L_1,\dots, L_k\}$ and $\phi:\pi_1(V)\to \Gamma$ is a homomorphism to a PTFA group with $\Gamma^{(n)}=1$.  Suppose also that either 
$\beta_1^\Gamma(\bdry V) = \beta_1(\bdry V)-1$  or $\pi_1(\bdry V)\to \Gamma$ is trivial.
Then the lifts $\widetilde L_1, \dots\widetilde L_k$  are $\K(\Gamma)$-linearly independent in $\dfrac{H_2(V;\K(\Gamma))}{H_2(\bdry V;\K(\Gamma))}$ and $\rank_{\K(\Gamma)}\left(\dfrac{H_2(V;\K(\Gamma))}{H_2(\bdry V;\K(\Gamma))}\right)=2k$.
\end{prop}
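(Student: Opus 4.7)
The plan is to adapt the proof of Proposition~\ref{highOrderSoln} (i.e., \cite[Lemma~5.10]{CHL3}) to the $n.5$-null-bordism setting. For the statement to have content beyond Proposition~\ref{highOrderSoln}, the PTFA hypothesis should be read as $\Gamma^{(n+1)}=1$ (rather than the strictly stronger $\Gamma^{(n)}=1$, under which both Lagrangians and duals lift and the conclusion of Proposition~\ref{highOrderSoln} applies verbatim). Under $\Gamma^{(n+1)}=1$, only the Lagrangians, not the duals, are guaranteed to lift to the $\Gamma$-cover of $V$. The three steps are: lift the $L_j$'s, compute the $\K(\Gamma)$-rank of $H_2(V;\K(\Gamma))/H_2(\bdry V;\K(\Gamma))$ via an Euler characteristic argument, and deduce $\K(\Gamma)$-linear independence of the lifted Lagrangians by pairing with the duals.

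To lift the Lagrangians, observe that $V$ being an $n.5$-null-bordism puts each $\pi_1(L_j)$ inside $\pi_1(V)^{(n+1)}_r$, which (under $\Gamma^{(n+1)}=1$) maps trivially to $\Gamma$; hence each $L_j$ lifts to a closed surface $\widetilde L_j\subseteq\bar V$, giving $[\widetilde L_j]\in H_2(V;\K(\Gamma))$. The duals satisfy only $\pi_1(D_j)\subseteq \pi_1(V)^{(n)}_r$, whose image in $\Gamma^{(n)}$ need not vanish.

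For the rank, first compute $\chi(V)=2k$ over $\Q$ using the null-bordism conditions together with the long exact sequence of $(V,\bdry V)$ and Lefschetz duality: the inputs $H_1(V)=\Z$, $H_2(V)/i_*H_2(\bdry V)\cong\Z^{2k}$, and triviality of $H_1(\bdry V)\to H_1(V)$ force $\beta_3(V)=1$ and $\beta_2(V)=2k+1$. Under the hypothesis $\beta^\Gamma_1(\bdry V)=\beta_1(\bdry V)-1=0$ (or triviality of $\phi|_{\bdry V}$), Poincar\'e duality for the closed 3-manifold $\bdry V$ forces $H_*(\bdry V;\K(\Gamma))=0$; the long exact sequence then gives $H_*(V;\K(\Gamma))\cong H_*(V,\bdry V;\K(\Gamma))$, and Lefschetz duality over $\K(\Gamma)$ yields $b^\Gamma_i(V)=b^\Gamma_{4-i}(V)$. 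With $b^\Gamma_0=b^\Gamma_4=0$ (from $\phi$ nontrivial and $V$ having boundary), we obtain $\chi(V)=b^\Gamma_2(V)-2b^\Gamma_1(V)=2k$. To upgrade this to $b^\Gamma_2(V)=2k$, use the decomposition $V=E\cup_{M_K}Y$ from Lemma~\ref{lem:Efacts} and a Mayer-Vietoris argument to conclude $b^\Gamma_1(V)=0$: $H_1(E;\K(\Gamma))=0$ follows from the explicit handle description of $E$ relative to $M_K$ together with $H_1(M_K;\K(\Gamma))=0$, while $H_1(Y;\K(\Gamma))=0$ is standard for $n.5$-solutions when $\Gamma^{(n+1)}=1$ (cf.\ \cite[Proposition~2.11]{COT}).

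For linear independence, assume $\sum a_j[\widetilde L_j]=0$ in $H_2(V;\K(\Gamma))/H_2(\bdry V;\K(\Gamma))=H_2(V;\K(\Gamma))$ (equality by the vanishing in the previous step). The nondegenerate $\K(\Gamma)$-valued intersection form $H_2(V;\K(\Gamma))\times H_2(V,\bdry V;\K(\Gamma))\to\K(\Gamma)$ is computed by pairing with the duals $[D_i]$: even when $D_i$ does not lift globally to $\bar V$, the ordinary transverse intersection number $L_j\cdot D_i=\delta_{ij}$ lifts, in a neighborhood of the unique intersection point (when $i=j$), to a single transverse intersection of $\widetilde L_j$ with one specific sheet of the preimage of $D_i$ in $\bar V$, contributing $\delta_{ij}\in\K(\Gamma)$ to the pairing. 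This forces $a_i=0$ for each $i$. The main obstacle is establishing $b^\Gamma_1(V)=0$ in the rank step; once this is done, the rank equality and the linear independence both follow routinely.
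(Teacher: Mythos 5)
Your reading of the hypothesis as $\Gamma^{(n+1)}=1$ is almost certainly correct (the paper uses exactly that hypothesis when it invokes this proposition in Theorem~\ref{thm:n.5IsectionForm}, and with $\Gamma^{(n)}=1$ the duals would also lift and the statement would degenerate to Proposition~\ref{highOrderSoln}). However, your proposal has a genuine gap in the linear independence step, and the step you flag as the main obstacle is actually the easy one.

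On the rank step: you do not need, and should not assume, the decomposition $V=E\cup_{M_K}Y$. Proposition~\ref{highOrderSoln.5} is stated for an arbitrary $n.5$-null-bordism; not every such $V$ arises as $E\cup Y$ from a derivative construction. The paper gets $\beta^\Gamma_0(V)=\beta^\Gamma_1(V)=0$ directly from \cite[Propositions~3.7 and 3.10]{C} (these hold for any CW complex with a nontrivial PTFA coefficient system), and the Euler characteristic argument then shows the twisted rank of $H_2(V;\K(\Gamma))/H_2(\bdry V;\K(\Gamma))$ equals the untwisted rank $2k$. So the rank computation is fine, but by a shorter route than you propose.

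The real gap is in your linear independence argument. You pair $[\widetilde L_j]$ against ``$[D_i]$'', but $\pi_1(D_i)$ need not map trivially to $\Gamma$ when only $\Gamma^{(n+1)}=1$; the preimage of $D_i$ in the $\Gamma$-cover is then a (generally noncompact, infinite-degree) cover of $D_i$ with no fundamental class, and there is no element of $H_2(V,\bdry V;\K(\Gamma))$ to pair against. A local intersection count at the single geometric intersection point gives an integer, not an element of $\K(\Gamma)$, and does not control a $\K(\Gamma)$-linear relation $\sum a_j[\widetilde L_j]=0$ with $a_j\in\K(\Gamma)$. This is precisely where the paper has to work harder: it removes a tubular neighborhood $N$ of a curve generating $H_1(V)$ (chosen disjoint from the $L_i$ and $D_i$), shows $H_*(V,V-N;\K(\Gamma))=0$, reduces to proving $H_3(V-N,\,X\sqcup\bdry V;\K(\Gamma))=0$ for $X=\sqcup L_i$, and deduces this from the vanishing of the corresponding untwisted group via Strebel's lemma (Proposition~\ref{prop:Strebel}); only the untwisted injectivity uses the $\Q$-valued intersection with the duals $D_j$. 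Without some mechanism like Strebel's lemma to transfer the untwisted nondegeneracy to the $\K(\Gamma)$ setting, your argument does not close.
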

\begin{proof}

In the case that $\pi_1(V)\to \Gamma$ is the trivial homomorphism, then the proof is immediate.  

Our proof is based on the proofs of Theorem 4.2 and Proposition 4.3 of \cite{COT}.  We begin by showing that 
\begin{equation}\label{twisted = untwisted}\rank_{\K(\Gamma)}\left(\dfrac{H_2(V;\K(\Gamma))}{H_2(\bdry V;\K(\Gamma))}\right)
=
\rank_{\Q}\left(\dfrac{H_2(V;\Q)}{H_2(\bdry V;\Q)}\right).
\end{equation}
The latter is $2k$, as $\{L_i, D_i\}$ is a  basis.  

In order to prove equation (\ref{twisted = untwisted}) consider the exact sequence with coefficients in $\Q$ or in $\K(\Gamma)$
\begin{equation}\label{GenericSeq}
0\to H_4(V;\bdry V)\to H_3(\bdry V)\to H_3(V)\to H_3(V,\bdry V)\to H_2(\bdry V)\to H_2(V)\to \dfrac{H_2(V)}{H_2(\bdry V)}\to 0
\end{equation}
Over $\Q$ we have that each of $H_4(V;\bdry V)$, $H_3(\bdry V)$, $H_3(V)$, $H_3(V,\bdry V)$ is rank $1$.  It follows that $H_2(\bdry V;\Q)\to H_2(V;\Q)$ is injective and
\begin{equation}\label{UntwistedRank}
\rank_\Q\left(\dfrac{H_2(V;\Q)}{H_2(\bdry V;\Q)}\right) = \beta_2(V)-\beta_2(\bdry V) = \beta_2(V)-\beta_1(\bdry V)
\end{equation}
 The second equality follows by Poincar\'e duality.

On the other hand $H_3(V,\bdry V;\K(\Gamma))\cong H_1(V;\K(\Gamma))=0$, by the Poincar\'e duality isomorphism and \cite[Proposition 3.10]{C}.  Thus, $H_2(\bdry V;\K(\Gamma))\to H_2(V;\K(\Gamma))$ is injective and 
\begin{equation}\label{TwistedRank} \rank_{\K(\Gamma)}\left(\dfrac{H_2(V;\K(\Gamma))}{H_2(\bdry V;\K(\Gamma))}\right) = \beta^\Gamma_2(V)-\beta^\Gamma_2(\bdry V) = \beta^\Gamma_2(V)-\beta^\Gamma_1(\bdry V) \end{equation}
By assumption $\beta^\Gamma_1(\bdry V) = \beta_1(\bdry V)-1$.   It remains to compare $\beta_2(V)$ with $\beta_2^\Gamma(V)$.  By Propositions 3.7 and 3.10 of \cite{C}, $\beta_0^\Gamma(V)= \beta_1^\Gamma(V) = 0$.  By Poincar\'e duality, $\beta_4^\Gamma(V)=0$.  Taking advantage of the exact sequence,  $0= H_1(V;\K(\Gamma))\to H_1(V,\bdry V;\K(\Gamma))\to H_0(\bdry V;\K(\Gamma))= 0$,  we see $H_3(V;\K(\Gamma))\cong H_1(V,\bdry V;\K(\Gamma)) = 0$, and $\beta^\Gamma_3(V)=0$.  Since $\beta_k^\Gamma(V)=0$ for all $k\neq 2$ and the alternating sum of ranks of twisted homology gives the Euler characteristic, we conclude $\chi(V)=\beta^\Gamma_2(V)$.  

On the other hand $\beta_0(V)=\beta_1(V)=\beta_3(V)=1$ and $\beta_4(V)=0$ so that $\chi(V)=\beta_2(V)-1$, and in particular $\beta_2^\Gamma(V) = \beta_2(V)-1$We new see that the right hand sides of equations (\ref{UntwistedRank}) and (\ref{TwistedRank}) coincide.  Equation~(\ref{twisted = untwisted}) follows.  

Next we verify equation~(\ref{twisted = untwisted}) in the case that $\pi_1(\bdry V)\to \Gamma$  is the trivial homomorphism and $\pi_1( V)\to \Gamma$ is nontrivial.  By the triviality assumption, $\beta_*(\bdry V) = \beta_*^\Gamma(\bdry V)$.   Using the exactness of 
$$
H_1(V;\K(\Gamma))\to H_1(V, \bdry V;\K(\Gamma))\to H_0(\bdry V;\K(\Gamma))\to H_0(V;\K(\Gamma))
$$
and the fact that $H_1(V;\K(\Gamma))$ and $H_0(V;\K(\Gamma))$ each vanish while $H_0(\bdry V;\K(\Gamma))$ is rank 1, it follows that $H_3(V;\K(\Gamma))\cong H_1(V,\bdry V;\K(\Gamma))$ is rank $1$.  Similarly to before, $\beta_0^\Gamma(V) = \beta_1^\Gamma(V)=\beta_4^\Gamma(V)=0$.  Thus, $\beta_2^\Gamma(V) = \chi(V)+1$, which as previously stated, is exactly $\beta_2(V)$.  

Now we prove the linear independence claim.    Let $X$ be the disconnected 2-complex given by the union of the Lagrangians $L_1\sqcup\dots\sqcup L_k$.  Since $\pi_1(L_i)\to \Gamma$ is trivial for each $i$, 
$$H_2(X \sqcup \bdry V, \bdry V;\K(\Gamma))\cong  \oplus_{i=1}^kH_2(L_i;\Q)\otimes_\Q \K(\Gamma)\cong \K(\Gamma)^k$$ is freely generated by lifts of fundamental classes of the Lagrangians.  In order to conclude that the Lagrangians are linearly independent in $H_2(V;\K(\Gamma))/H_2(\bdry V;\K(\Gamma))$ we need to show $H_2(X \sqcup \bdry V, \bdry V;\K(\Gamma))\to H_2(V, \bdry V;\K(\Gamma))$ is injective.  

Let $\ell$ be a simple closed curve which generates $H_1(V)$ and $N\cong S^1\times D^3$ be a tubular neighborhood of $\ell$.  We will also assume that $N$ is disjoint from $X$ and from the duals $D_1,\dots, D_k$.  Since $N$ is a homotopy 1-complex and the map $\pi_1(N)\to\Gamma$ is non-trivial, Propositions 3.7 and 3.10 of \cite{C} conclude that $H_*(N;\K(\Gamma))=0$.    Consider the exact sequence of the triple $(V,V-N, \bdry V)$.
$$
H_{p+1}(V, V-N;\K(\Gamma))\to H_p(V-N,\bdry V;\K(\Gamma))\to H_p(V,\bdry V;\K(\Gamma))\to H_p(V, V-N;\K(\Gamma)).
$$ 
By Excision and Poincar\'e duality $H_*(V, V-N;\K(\Gamma))\cong H_*(N, \bdry N;\K(\Gamma))\cong H_{4-*}(N;\K(\Gamma))=0$.   Thus, the inclusion induced map $H_p(V-N,\bdry V;\K(\Gamma))\to H_p(V,\bdry V;\K(\Gamma))$ is an isomorphism.  Instead of trying to show that $H_2(X \sqcup \bdry V, \bdry V;\K(\Gamma))\to H_2(V, \bdry V;\K(\Gamma))$ is injective it will suffice to show that $H_2(X \sqcup \bdry V, \bdry V;\K(\Gamma))\to H_2(V-N, \bdry V;\K(\Gamma))$ is injective.

  According to the exact sequence of the triple $(V-N, X\sqcup \bdry V, \bdry V)$, 
$$H_3(V-N, X\sqcup \bdry V;\K(\Gamma))\to H_2(X \sqcup \bdry V, \bdry V;\K(\Gamma))\to H_2(V-N, \bdry V;\K(\Gamma)),$$
it will furthermore suffice to show that $H_3(V-N, X\sqcup \bdry V;\K(\Gamma))=0$.  Recall the following fact due to Strebel.
\begin{prop}[Page 305 of \cite{Str}. See also Proposition 3.4 of \cite{C}]\label{prop:Strebel}
If $\Gamma$ is a PTFA group, then any map between projective $\Q[\Gamma]$-modules whose image under the functor $-\otimes_{\Q[\Gamma]}\Q$ is injective is itself injective.  
\end{prop}
Since $(V-N, X\sqcup \bdry V)$ is a homotopy 3-complex, we can (up to chain homotopy equivalence) realize the chain complex $C_*(V-N, X\sqcup \bdry V;\Q[\Gamma])$ as
$
C_3^\Gamma\overset{\bdry_3^\Gamma}{\to} C_2^\Gamma\to C_1^\Gamma\to C_0^\Gamma
$.  
Applying the functor $-\otimes_{\Q[\Gamma]}\Q$ we get the untwisted complex $C_*(V-N, X\sqcup \bdry V;\Q)$:
$
C_3\overset{\bdry_3}{\to} C_2\to C_1\to C_0
.$
Thus, if we can show that $H_3(V-N, X\sqcup \bdry V;\Q)=0$, then $\bdry_3$ is injective.  Finally by Proposition~\ref{prop:Strebel}, $\bdry_3^\Gamma$ is injective and $H_3(V-N, X\sqcup \bdry V;\Q[\Gamma])=0$.  Since $\K(\Gamma)$ is a flat $\Q[\Gamma]$-module, it will follow that $H_3(V-N, X\sqcup \bdry V;\K(\Gamma))=0$, and we will be done.

All that remains is to demonstrate that $H_3(V-N, X\sqcup \bdry V;\Q)=0$.   Consider the untwisted exact sequence of the triple $(V-N, X\sqcup \bdry V, \bdry V)$
\begin{equation}\label{Mayer-VeitorisMinusN}H_3(V-N,\bdry V;\Q)\to H_3(V-N, X\sqcup \bdry V;\Q)\to H_2(X\sqcup \bdry V,\bdry V;\Q)\to H_2(V-N,\bdry V;\Q)
\end{equation}
By Poincar\'e-Lefshetz duality, $H_3(V-N,\bdry V;\Q)\cong H_1(V-N,\bdry N;\Q)$, which fits into the exact sequence
$$
H_1(\bdry N;\Q)\to H_1(V-N;\Q)\to H_1(V-N,\bdry N;\Q)\to 0
$$
Since $V$ is gotten by adding to $V-N$ a 3-handle and a 4-handle, $H_1(V-N;\Q)\cong H_1(V;\Q)\cong \Q$.  By design, the generator of $H_1(V)$ is carried by $H_1(\bdry N;\Q)$.  Thus, $H_1(\bdry N;\Q)\to H_1(V-N;\Q)$ is onto and we conclude $H_3(V-N,\bdry V;\Q)\cong H_1(V-N,\bdry N;\Q)=0$.  

Next we show that the map $H_2(X\sqcup \bdry V,\bdry V;\Q)\to H_2(V-N,\bdry V;\Q)$ of (\ref{Mayer-VeitorisMinusN}) is injective.  Notice, $H_2(X\sqcup \bdry V,\bdry V;\Q)\cong H_2(X;\Q)$ has basis given by the fundamental classes of the Lagrangians $\{L_i\}$.  If some linear combination $\Sum_{i=1}^k a_i L_i$ were zero in $H_2(V-N,\bdry V;\Q)$ then for each dual $D_j$, it would follows that 
$$0 = D_j\cdot \Sum_{i=1}^k a_i L_i = a_j$$
where $\cdot$ indicates the intersection form which counts intersection points with multiplicity.  Since each $a_j=0$ we conclude that $H_2(X\sqcup \bdry V,\bdry V;\Q)\to H_2(V-N,\bdry V;\Q)$ is injective.

%.  Consider the exact sequence 
%$$
%H_3(V,V-N;\Q)\overset{\bdry}{\to} H_2(V-N,\bdry V;\Q)\to H_2(V,\bdry V;\Q)\to H_2(V,V-N;\Q)
%$$
%By excision, $H_3(V,V-N;\Q)\cong H_3(N,\bdry N;\Q)\cong \Q$ is generated by a  3-ball $B$ dual to $\ell$.  If $\Sigma$ is the 3-maniflold in $V$ dual to $\ell$, then $\bdry B$ is likewise the boundary in $V-N$ of $\Sigma-N$ and so is null homologous.  Thus, $\bdry:H_3(V,V-N;\Q){\to} H_2(V-N,\bdry V;\Q)$ is the trivial homomorphism. By Excision and Poincar\'e duality $H_2(V,V-N;\Q)\cong H_2(N,\bdry N;\Q)\cong H_2(N;\Q)=0$.  Thus, $H_2(V-N,\bdry V;\Q)\to H_2(V,\bdry V;\Q)$ is an isomorphism.  

%Finally, since $H_2(X\sqcup \bdry V, \bdry V;\Q)\to H_2(V,\bdry V;\Q)$ is injective and $H_2(V-N,\bdry V;\Q)\to H_2(V,\bdry V;\Q)$ is an isomorphism, it follows that $H_2(X\sqcup \bdry V,\bdry V;\Q)\to H_2(V-N,\bdry V;\Q)$ is injective.  

By the exactness of (\ref{Mayer-VeitorisMinusN}) we see $H_3(V-N, X\sqcup \bdry V;\Q)=0$.  This completes the proof. 
\end{proof}

\section{Cutting open null-bordisms.}\label{sect:cut open}

If $J$ is a derivative of $K$ which is associated to a slice disk for $K$ and $\Delta_K(t)\neq 1$, then by Proposition~\ref{prop:NullBordismDeriv} $M(J)$ bounds a non-degenerate null-bordism.  In this section we start with the hypothesis that $M(J)$ bounds a null-bordism $V$, (or an $n$-null-bordism) and alter it.    

Proceeding with our discussion,  note that since $H_1(V)\cong \Z$, $V$ admits a unique connected infinite cyclic cover $\widetilde{V}$. We describe a manifold, $W$, that is  a connected fundamental domain for the action of the deck translation group on $\widetilde{V}$. Let $g:V\to S^1$ be a smooth map that induces an isomorphism on $H_1$. Let $\Sigma$ be the preimage of a regular value. Since $H_1(M_J;\mathbb{Z})\to H_1(V;\mathbb{Z})$ is the zero map, we can arrange that $\Sigma$ lies in the interior of $V$ and we can also arrange that $\Sigma$ is connected. Of course $\Sigma$ has a bicollar and is orientable. This $\Sigma$ is not unique. If $V$ is an $n$-null-bordism ($n\ge 1$),  we will have need to choose $\Sigma$ to be disjoint from the Lagrangians $L_i$ and the duals $D_i$.  The following lemma reveals that we can do so.  

\begin{lem}\label{Lem:Sigma}
Suppose that $V$ is a $1$-null-bordism.  Let $\{L_i, D_i\}$ be the basis for $H_2(V)/H_2(\bdry V)$ given by Definition~\ref{defn:NullBordism}. There exists an embedded 3-manifold $\Sigma$ in $V$ which intersects a generator of $H_1(V)$ transversely in a single point and which is disjoint from every $L_i$ and $D_i$.
\end{lem}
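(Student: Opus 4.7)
The plan is to realize $\Sigma$ as the transverse preimage of a regular value under a smooth map $g\colon V\to S^1$ representing a generator of $H^1(V;\Z)$, after first homotoping $g$ so that it is locally constant on a neighborhood of $X := \bigsqcup_{i=1}^{k}(L_i\cup D_i)$ and on $\bdry V$. Choosing a regular value different from the constant value on that neighborhood will then produce a $3$-manifold automatically disjoint from all the $L_i$ and $D_i$.

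The key point to verify is that $g\vert_X$ and $g\vert_{\bdry V}$ are both null-homotopic. For $g\vert_{\bdry V}$, this is immediate from the hypothesis $H_1(M_J)\to H_1(V)=0$, since maps into $S^1=K(\Z,1)$ are classified by their induced map on $H_1$. For $g\vert_X$, because $H_1(V)\cong\Z$ is torsion-free, $\pi_1(V)^{(1)}_r$ coincides with the commutator subgroup $\ker(\pi_1(V)\to H_1(V))$, so the $1$-null-bordism hypothesis $\pi_1(L_i),\pi_1(D_i)\subseteq \pi_1(V)^{(1)}_r$ forces $H_1(L_i)\to H_1(V)$ and $H_1(D_i)\to H_1(V)$ to vanish. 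Hence $g\vert_{L_i}$ and $g\vert_{D_i}$ are each null-homotopic. Since $L_i\cap D_i$ is a single point, Mayer--Vietoris gives $H_1(L_i\cup D_i)\cong H_1(L_i)\oplus H_1(D_i)$, so $g\vert_{L_i\cup D_i}$ is null-homotopic as well, and the same holds component-wise on $X$ since the pairs $\{L_i,D_i\}$ are pairwise disjoint.

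Equipping $V$ with a CW structure in which $X\sqcup\bdry V$ is a subcomplex, the homotopy extension property then produces a smooth map $\tilde g$ homotopic to $g$ that is constantly equal to some $c\in S^1$ on an open neighborhood of $X\sqcup\bdry V$. For any regular value $r\neq c$ of $\tilde g$, the preimage $\Sigma_0:=\tilde g^{-1}(r)$ is a smoothly embedded closed $3$-submanifold lying in the interior of $V$, disjoint from every $L_i$ and $D_i$, with $[\Sigma_0]\cdot\mu=\deg(\tilde g)=1$ for $\mu$ a generator of $H_1(V)$. The main technical nuisance is then cosmetic: $\Sigma_0$ need not be connected. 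But since $X$ has codimension two in $V$, the complement $V\setminus X$ is path-connected, so components of $\Sigma_0$ can be tubed together along arcs in $V\setminus X$ (each tube supported in a small $4$-ball disjoint from $X$), producing a connected $\Sigma$ still disjoint from $X$ and representing the same class in $H_3(V,\bdry V)$. Finally, a short transverse arc through any point of $\Sigma$, closed up through $V\setminus\Sigma$, is a loop meeting $\Sigma$ in one transverse point; since $[\Sigma]\in H_3(V,\bdry V)\cong\Z$ is primitive, this loop generates $H_1(V)$.
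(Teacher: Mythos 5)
Your argument is correct, and it takes a genuinely different route from the paper's proof. The paper starts with an arbitrary transverse preimage $\Sigma$ of a regular value of $g\colon V\to S^1$ and then surgically removes the circles of $\Sigma\cap L_i$ one at a time: using the $1$-null-bordism hypothesis to show each such circle must separate $L_i$ (a non-separating circle would have a dual curve with non-torsion image in $H_1(V)$, contradicting $\pi_1(L_i)\subseteq\pi_1(V)^{(1)}_r$), cutting out an annulus neighborhood of the boundary circles of an outermost piece of $L_i$, and re-gluing along the boundary of a tubular neighborhood of that piece, inducting on the number of intersection circles. You instead modify the \emph{map} rather than its level set: since $H_1(V)\cong\Z$ is torsion-free, $\pi_1(V)^{(1)}_r$ is exactly the kernel of abelianization, so $g$ restricted to each $L_i$, $D_i$ and to $\bdry V$ is null-homotopic; by the homotopy extension property one can make $g$ constant near $X\sqcup\bdry V$ and then take a regular level away from that constant value. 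Both use the same key input --- the vanishing of $H_1(L_i)\to H_1(V)$ and $H_1(D_i)\to H_1(V)$ --- but yours is more compact and conceptual, and it avoids having to track that the surgery is progress-making, at the cost of a bit of fussing over making the homotopy smooth and constant on a neighborhood of a non-manifold $X$ (which has corners at the points $L_i\cap D_i$), and over reconnecting components of the level set. You handle both issues correctly. One small presentational point: the phrase ``$[\Sigma_0]\cdot\mu=\deg(\tilde g)=1$'' conflates the degree of $\tilde g$ with the statement that $\Sigma_0$ is Poincar\'e dual to the generator $\tilde g^*[S^1]\in H^1(V;\Z)$; the latter is what you actually use, and it is what shows $\tilde g^{-1}(r)\neq\emptyset$, that the class $[\Sigma]$ is primitive in $H_3(V,\bdry V)$, and (via the unimodularity of the intersection pairing) that a transverse dual loop generates $H_1(V)$.
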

\begin{proof}[Proof of Lemma \ref{Lem:Sigma}]
Pick $\Sigma$ as in the discussion prior to the Lemma.  Since $\Sigma$ does not separate, there is a simple closed curve $\ell$ in $V$ which intersects $\Sigma$ transversely in a single point.  This $\ell$ generates $H_1(V)$.  Since $V$ is a 4-manifold, $\ell$ can be assumed to be disjoint from the surfaces $L_i$ and $D_i$.    The proof will proceed by modifying $\Sigma$.

Up to isotopy, assume that $\Sigma$ is transverse to the surfaces $L_i$ and $D_i$ for all $i$.  Then $L_i\cap \Sigma$ consists of simple curves $\alpha_1,\dots \alpha_n$.  If some $\alpha_k$ failed to separate $L_i$ then its dual curve $\beta$ in $L_i$ would intersect $\Sigma$ transversely in a single point and $\beta$ would be non-zero in $H_1(V)$, in contradiction to the property (\ref{NullBordismH2}'a) of Definition \ref{defn:NullBordism} since $V$ is a $1$-null-bordism.  Thus, $\{\alpha_k\}_{k=1}^m$ separates $L_i$ into $m+1$ components:  $L_i^1, \dots, L_i^{m+1}$.  $D_i$ intersects one of these (say $L_i^1$) transversely in a single point, and is disjoint form the rest.  Let $\alpha_1,\dots,\alpha_j$ be the boundary of $L_i^{m+1}$.  We now modify $\Sigma$ by cutting out tubular neighborhoods of $\alpha_1,\dots \alpha_j$ and gluing in the boundary of a tubular neighborhood of $L_i^{m+1}$.  One can immediately check that $\Sigma\cap L_i$ has fewer components and that otherwise the intersection of $\Sigma$ with the Lagrangians, the duals, and $\ell$ is unchanged.  By inducting on the number of components of $L_i\cap \Sigma$  we  arrange that $\Sigma\cap L_i = \emptyset$.  Performing this procedure for each $L_i$, $\Sigma\cap L_i = \emptyset$ for all $i$.  The same argument can be used to arrange that $\Sigma\cap D_i=\emptyset$.
\end{proof}

We can now define the 4-manifold which we will use to study $J$.

\begin{defn}\label{defn: cut open}
Let $V$ be a null-bordism for $J$ and $\Sigma$ be an embedded 3-manifold dual to the generator of $H_1(V)\cong \Z$.  Then $\Sigma$ has an open product neighborhood $N$.  The submanifold $X(V,\Sigma) := V-N$ is \textbf{the result of cutting $V$ open along $\Sigma$}.  We will often leave $\Sigma$ out of the notation, saying $X(V)$.  Let $n\in \N$.  If $V$ is instead an $n$-null-bordism (or $n.5$-null-bordism) then we require that $\Sigma$ satisfy the conclusions of Lemma~\ref{Lem:Sigma}.
\end{defn}

The remainder of this section is devoted to the study of the intersection form on the second homology of cut open null-bordisms.  

\begin{prop}\label{prop:IsectnForm}
Let $n\in \N$ and Let $V$ be an $n$-null-bordism.  Let $\{L_1,\dots, L_k, D_1,\dots, D_k\}$ be the Lagrangians and duals of Definition~\ref{defn:NullBordism}  (If $V$ is a null-bordism then $\{L_i, D_i\} = \emptyset$.)  Let $W=X(V)$ be the resulting cut open $n$-null-bordism.  Then $\{L_i, D_i\}$ forms a basis for $\frac{H_2(W)}{H_2(\bdry W)}\cong \Z^{2k}$.  %Thus, the intersection form on $H_2(W)/H_2(\bdry W)$ is a direct sum of $\left[\begin{array}{cc}0&1\\1&0\end{array}\right]$.
\end{prop}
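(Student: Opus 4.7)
First I would use Lemma~\ref{Lem:Sigma} to place every $L_i$ and $D_i$ in $W$, so that these surfaces define classes in $H_2(W)$. The plan is then to show that the inclusion $W \hookrightarrow V$ induces an isomorphism
\[
\Phi\colon \frac{H_2(W)}{i_*H_2(\bdry W)} \longrightarrow \frac{H_2(V)}{i_*H_2(\bdry V)} \cong \Z^{2k}
\]
sending $\{L_i, D_i\}$ to the prescribed basis. Since $\{L_i, D_i\}$ already lies in the source, surjectivity of $\Phi$ will be automatic once well-definedness is established, and the proposition reduces to checking that $\Phi$ is both well-defined and injective.

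For well-definedness, decompose $H_2(\bdry W) = H_2(M_J) \oplus H_2(\Sigma^+) \oplus H_2(\Sigma^-)$. The $H_2(M_J)$ summand obviously maps into $i_*H_2(\bdry V)$ under $H_2(W)\to H_2(V)$. For the $H_2(\Sigma^\pm)$ summands, take a 2-cycle $\gamma \subset \Sigma$; Lemma~\ref{Lem:Sigma} guarantees $\gamma$ is geometrically disjoint from every $L_i$ and $D_i$, so $[\gamma]\cdot [L_i] = [\gamma]\cdot [D_i] = 0$ in the intersection form on $V$. By Poincar\'e--Lefschetz duality, the intersection form on $H_2(V)/i_*H_2(\bdry V) \cong \Z^{2k}$ is non-degenerate, with $\{L_i, D_i\}$ serving as a symplectic basis; hence $[\gamma]$ must vanish in the quotient, i.e., $[\gamma]\in i_*H_2(\bdry V)$. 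I expect this non-degeneracy step to be the main obstacle, since it is the place where one must convert a geometric disjointness condition into the algebraic vanishing of a homology class.

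For injectivity I would invoke the long exact sequence of the pair $(V,W)$ together with the excision isomorphism $H_n(V,W) \cong H_n(\overline{N}, \bdry \overline{N})$, where $\overline{N} = \Sigma \times [-\epsilon,\epsilon]$ is a product neighborhood of $\Sigma$. By construction the connecting homomorphism $\bdry\colon H_3(V,W) \to H_2(W)$ factors as
\[
H_3(V,W) \cong H_3(\overline{N}, \bdry \overline{N}) \longrightarrow H_2(\bdry \overline{N}) = H_2(\Sigma^+)\oplus H_2(\Sigma^-) \hookrightarrow H_2(\bdry W) \longrightarrow H_2(W),
\]
so $\Ker(H_2(W)\to H_2(V)) = \im(\bdry)\subseteq i_*H_2(\bdry W)$. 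Now if $\Phi([w])=0$ for some $w\in H_2(W)$, the image of $w$ in $H_2(V)$ equals $i_*\beta$ for some $\beta\in H_2(\bdry V)\subseteq H_2(\bdry W)$; replacing $w$ by $w - i_*\beta$ yields an element of $\Ker(H_2(W)\to H_2(V))\subseteq i_*H_2(\bdry W)$, and so $w$ itself lies in $i_*H_2(\bdry W)$. This gives $[w]=0$ and establishes injectivity of $\Phi$, completing the plan.
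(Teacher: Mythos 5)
Your proof is correct and takes a genuinely different, though closely related, route from the paper. The paper works directly in $W$: linear independence of $\{L_i, D_i\}$ in $H_2(W)/H_2(\bdry W)$ comes from pairing a putative relation against the $D_j$ and $L_j$ (any class carried by $\bdry W$ intersects these trivially), and spanning comes from the Mayer--Vietoris sequence for $V = W \cup N$, which lets one write any $Y\in H_2(W)$ as a combination of the $L_i, D_i$ plus classes carried by $\bdry V \sqcup \Sigma^+ \sqcup \Sigma^-$. You instead set up the inclusion-induced map $\Phi$ on the quotients and show it is an isomorphism, so that the basis claim is inherited from $H_2(V)/H_2(\bdry V)$. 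The extra ingredient you need, which the paper avoids by never passing through $\Phi$, is well-definedness: you get it by pairing cycles in $\Sigma$ against the $\{L_i,D_i\}$ in $V$ and invoking non-degeneracy of the form on $H_2(V)/H_2(\bdry V)$. Your injectivity step via the long exact sequence of $(V,W)$ and excision to $(\overline{N},\bdry\overline{N})$ is the same geometric fact as the paper's Mayer--Vietoris step, just packaged differently. Both approaches buy essentially the same thing; yours is slightly more structured, making the isomorphism $\Phi$ explicit (which could be reused elsewhere), at the cost of the extra well-definedness verification. One small remark: the non-degeneracy you need follows directly from the hypothesis that $\{L_i,D_i\}$ is a symplectic basis in Definition~\ref{defn:NullBordism}; Poincar\'e--Lefschetz duality alone only gives non-degeneracy modulo torsion and is not the cleanest justification here, though it does no harm in the presence of the symplectic hypothesis.
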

\begin{proof}

Suppose that some linear combination $F = \Sum a_iL_i + \Sum b_i D_i$ is zero in $H_2(W)/ H_2(\bdry W)$.  Then it would follow that the intersection form $F\cdot D_j=0$ for all $j$, so that $a_j=0$.  Similarly $b_j=0$ and $\{L_i, D_i\}$ is linearly independent.  

Next we prove that $\{L_i, D_i\}$ spans.  Consider any $Y\in H_2(W)$.  It suffices to show that there is a linear combination $C = \Sum a_i L_i+\Sum b_iD_i$ and an element $Z\in H_2(\bdry W)$ such that $Y = C+Z$ in $H_2(W)$.   Consider the following Mayer-Veitoris Exact sequence:
\begin{equation}\label{MayerVeitorisW}
H_2(\Sigma^+)\oplus H_2(\Sigma^-)\overset{\phi}{\to} H_2(W)\oplus H_2(N)\overset{\psi}{\to} H_2(V)
\end{equation}
Where $N\cong \Sigma\times [-1,+1]$ is a closed product neighborhood of $\Sigma$ and $\Sigma^+$ and $\Sigma^-$ are the boundary components of $N$.  Since $\{L_i, D_i\}$ spans $H_2(V)/H_2(\bdry V)$ by Definition~\ref{defn:NullBordism} (\ref{NullBordismH2}'), there is a linear combination $C = \Sum a_i L_i+\Sum b_i D_i$ and a class $Z_0\in H_2(\bdry V)$ such that $Y-C-Z_0\in \ker(\psi) = \im(\phi)$.  Thus, there is a class $Z_1\in H_2(\Sigma^+)\oplus H_2( \Sigma^-)$ such that $Y = C+Z_0+Z_1$ in $H_2(W)$.  Since $Z_0$ and $Z_1$ are carried by $\bdry W = \bdry V\sqcup \Sigma^+\sqcup \Sigma^-$ this completes the proof.  
\end{proof}

%We now collect the properties of this 4-manifold:
%
%Exact sequence for $H_2(X(V))$ in terms of $H_2(V)$.  

The next step will be to prove twisted homology analogues of Proposition~\ref{prop:IsectnForm}.  Let $n\in \N$ and $V$ be an $n$-null-bordism for the link $J$.  Let $f:\pi_1(V)\to \Gamma$ be a homomorphism to a  PTFA group $\Gamma$ with $\Gamma^{(n)}_r=0$.  This induces a homomorphism on $\pi_1(W)$ by the composition:
$$\pi_1(W)\to \pi_1(V)\overset{f}\to \Gamma.$$  Since $V$ is an $n$-null-bordism, the surfaces $L_i$ and $D_i$ all lift to the $\Gamma$-cover.  Denote their lifts by $\widetilde L_i$ and $\widetilde D_i$.  The following theorem reveals that $\{\widetilde L_i, \widetilde D_i\}$ is a basis for $\frac{H_2(W;\K(\Gamma))}{H_2(\p W;\K(\Gamma))}$.

\begin{thm}\label{thm:nIsectnForm} Let $n\in \N$.  Suppose that $V$ is an $n$-null-bordism and $\psi:\pi_1(V)\to \Gamma$ is a homomorphism where $\Gamma$ is PTFA and $\Gamma^{(n)}_r=0$. Suppose $W = X(V)$ is the resulting cut open null-bordism.  Suppose additionally that either $\psi:\pi_1(\bdry  V)\to \Gamma$ is trivial, or that $\beta_1^\Gamma(\bdry V)= \beta_1(\bdry V)-1$. 
%(This condition follows automatically if $J$ is a knot, rather than a link.)    
Then $\{\widetilde L_i, \widetilde D_i\}$ forms a basis for
$
\frac{H_2(W;\K(\Gamma))}{H_2(\p W;\K(\Gamma))}.
$
%Thus, the intersection form on $\frac{H_2(W;\K(\Gamma))}{H_2(\p W;\K(\Gamma))}$ decomposes as a  direct sum of $\left[\begin{array}{cc}0&1\\1&0\end{array}\right]$.
\end{thm}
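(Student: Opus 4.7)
The plan is to mirror the proof of Proposition~\ref{prop:IsectnForm} but with $\mathcal{K}(\Gamma)$-coefficients, invoking Proposition~\ref{highOrderSoln} to handle the homology of $V$ and using a twisted Mayer--Vietoris sequence to transfer the result to $W$. Concretely, I would split the argument into the standard two parts: linear independence of the lifts $\{\widetilde L_i, \widetilde D_i\}$ modulo $H_2(\bdry W; \mathcal{K}(\Gamma))$ via the equivariant intersection pairing, and spanning via a twisted Mayer--Vietoris sequence.

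For linear independence, suppose $F = \sum a_i \widetilde L_i + \sum b_i \widetilde D_i$ represents zero in $H_2(W; \mathcal{K}(\Gamma))/H_2(\bdry W; \mathcal{K}(\Gamma))$. Then $F$ vanishes in $H_2(W, \bdry W; \mathcal{K}(\Gamma))$, so its equivariant intersection with every class in $H_2(W; \mathcal{K}(\Gamma))$ is zero. Choosing the lifts so they share a common lift of the single transverse intersection point $L_i \cap D_i$, the equivariant intersections satisfy $\widetilde L_i \cdot \widetilde D_j = \delta_{ij}$, while $\widetilde L_i \cdot \widetilde L_j$ and $\widetilde D_i \cdot \widetilde D_j$ vanish since the underlying surfaces are disjoint by Definition~\ref{defn:n-sol}. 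Pairing $F$ with $\widetilde D_j$ and $\widetilde L_j$ successively forces $a_j = b_j = 0$.

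For spanning, take any $Y \in H_2(W; \mathcal{K}(\Gamma))$ and consider its image $\iota_*(Y) \in H_2(V; \mathcal{K}(\Gamma))$ under inclusion. Because the hypotheses of Proposition~\ref{highOrderSoln} match those of this theorem, we can write $\iota_*(Y) = \sum a_i \widetilde L_i + \sum b_i \widetilde D_i + Z_0$ modulo $H_2(\bdry V; \mathcal{K}(\Gamma))$. Since $\Sigma$ was chosen disjoint from all $L_i, D_i$ via Lemma~\ref{Lem:Sigma}, the lifts $\widetilde L_i, \widetilde D_i$ already represent classes in $H_2(W; \mathcal{K}(\Gamma))$, and the class $Y' := Y - \sum a_i \widetilde L_i - \sum b_i \widetilde D_i - Z_0$ thus vanishes in $H_2(V; \mathcal{K}(\Gamma))$. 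The twisted Mayer--Vietoris sequence associated to $V = W \cup N$, where $N$ is a product neighborhood of $\Sigma$ with $W \cap N \simeq \Sigma^+ \sqcup \Sigma^-$, namely
$$
H_2(\Sigma^+; \mathcal{K}(\Gamma)) \oplus H_2(\Sigma^-; \mathcal{K}(\Gamma)) \to H_2(W; \mathcal{K}(\Gamma)) \oplus H_2(N; \mathcal{K}(\Gamma)) \to H_2(V; \mathcal{K}(\Gamma)),
$$
then forces $Y'$ to lie in the image of $H_2(\Sigma^\pm; \mathcal{K}(\Gamma))$, which is contained in $H_2(\bdry W; \mathcal{K}(\Gamma))$.

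The main obstacle I expect is verifying that the equivariant intersection pattern among the lifts really matches the untwisted one, that is, that a coherent choice of lifts of the transverse points $L_i \cap D_i$ actually yields $\widetilde L_i \cdot \widetilde D_j = \delta_{ij}$ with no residual contribution from group-element twists, and that the self-intersections of the Lagrangians and duals can be arranged to contribute nothing to the linear system. This is the only conceptual subtlety beyond invoking Proposition~\ref{highOrderSoln} and running the standard Mayer--Vietoris argument, and it is handled by the geometric disjointness built into Definition~\ref{defn:n-sol} together with a bookkeeping of lifts.
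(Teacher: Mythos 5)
Your proof is correct, and the linear independence half is essentially identical to the paper's. However, your spanning argument takes a genuinely different route from the one in the paper. You observe that the twisted Mayer--Vietoris sequence for $V = W \cup N$ with $W \cap N \simeq \Sigma^+ \sqcup \Sigma^-$, combined with the full strength of Proposition~\ref{highOrderSoln} (which says the lifts form a \emph{basis}, not merely that the rank is $2k$), directly yields spanning: reduce $Y$ modulo the lifts and modulo $H_2(\bdry V;\K(\Gamma))$ so that its image in $H_2(V;\K(\Gamma))$ vanishes, then Mayer--Vietoris forces the remainder to come from $H_2(\Sigma^\pm;\K(\Gamma)) \subseteq H_2(\bdry W;\K(\Gamma))$. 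This is exactly the twisted analogue of the paper's own untwisted argument in Proposition~\ref{prop:IsectnForm}, and it is quite clean. The paper instead takes a more indirect route: it only uses the rank conclusion of Proposition~\ref{highOrderSoln} for $V$, and then proves the inequality
$$\rank_{\K(\Gamma)}\left(\tfrac{H_2(W;\K(\Gamma))}{H_2(\bdry W;\K(\Gamma))}\right) \le \rank_{\K(\Gamma)}\left(\tfrac{H_2(V;\K(\Gamma))}{H_2(\bdry V;\K(\Gamma))}\right)$$
by an Euler characteristic bookkeeping involving the long exact sequences of the pairs $(V,\bdry V)$, $(W,\bdry W)$, the cut sequence relating $\Sigma$, $W$, and $V$, Poincar\'e duality, and the kernels $K_p$ of the maps $H_p(\Sigma;\K(\Gamma)) \to H_p(W;\K(\Gamma))$, ultimately showing $b(V) - b(W) = 2k_1 \geq 0$. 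Linear independence (which both proofs establish the same way via the equivariant intersection form, after choosing lifts so that deck translates of $\widetilde L_i,\widetilde D_j$ do not intersect) then pins the rank down. Your approach avoids all of that arithmetic and is arguably preferable here; the paper's rank computation is useful in its own right, for example as a template for the argument in Theorem~\ref{thm:n.5IsectionForm} where the duals need not lift and a direct spanning argument from the lifts is unavailable.
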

%\begin{rem}
%Notice that since the image of $\pi_1(W)\to\pi_1(V)$ is contained in $\pi_1(V)^{(1)}$ one can take $\Lambda = \Gamma^{(1)}$ so that $\Lambda^{(n)} = \Gamma^{(n+1)} = 0$.  In particular when $n=1$ $\Lambda$ can be taken to be a torsion-free-abelian group.  This will be explored in detail in sections ---
%\end{rem}
\begin{proof}%[Proof of Theorem~\ref{thm:generalmainthm}]

%Throughout this proof homology will be taken with coefficients in the skew field $\K(\Gamma)$.  

By choosing lifts appropriately, we have that $\widetilde L_i$ and $\widetilde D_i$ intersect transversely in a single point and otherwise are disjoint, and that for any deck transformation $\gamma\in \Gamma$ other than the identity and any $i,j$, $\gamma\left[\widetilde L_i\right]\cap \widetilde L_j = \gamma\left[\widetilde L_i\right]\cap  \widetilde D_j = \gamma\left[\widetilde D_i\right]\cap  \widetilde L_j = \gamma\left[\widetilde D_i\right]\cap  \widetilde D_j = \emptyset$.  As in the proof of Proposition~\ref{prop:IsectnForm} the intersection form on $\frac{H_2(W;\K(\Gamma))}{H_2(\bdry W;\K(\Gamma))}$ reveals that $\{\widetilde{L_1}, \widetilde{D_1},\dots \widetilde{L_k}, \widetilde{D_k}\}$ is a $\K(\Gamma)$-linearly independent set.  If we can now show that $\rank_{\K(\Gamma)}\left(\frac{H_2(W;\K(\Gamma))}{H_2(\bdry W;\K(\Gamma))}\right)\le 2k$ then we are done.

By Proposition~\ref{highOrderSoln} (or \cite[Lemma 5.10]{CHL3}) $\rank_{\K(\Gamma)}\left(\frac{H_2(V;\K(\Gamma))}{H_2(\bdry V;\K(\Gamma))}\right) = 2k$.  The remainder of the proof consists of showing that $\rank_{\K(\Gamma)}\left(\frac{H_2(W;\K(\Gamma))}{H_2(\bdry W;\K(\Gamma))}\right)\le \rank_{\K(\Gamma)}\left(\frac{H_2(V;\K(\Gamma))}{H_2(\bdry V;\K(\Gamma))}\right)$.  Consider the following exact sequence with coefficients in $\K(\Gamma)$ where $Z=W$ or $Z=V$.  ($H_4(Z;\K(\Gamma))=0$ since $\bdry Z\neq \emptyset$.)
\begin{align}
0 \to H_4(Z,\bdry Z)\to H_3(\bdry Z) \to H_3(Z)\to H_3(Z,\bdry Z)\to H_2(\bdry Z)\to H_2(Z)\to \frac{H_2(Z)}{H_2(\bdry Z)}\to 0\nonumber
\end{align}

Let $b(Z) = \rank_{\K(\Gamma)}\left( \frac{H_2(Z;\K(\Gamma))}{H_2(\bdry Z;\K(\Gamma))}\right)$.  Using Poincar\'e duality, the above exact sequence reveals that
\begin{align}
b(V) &= \beta^\Gamma_2(V) - \beta^\Gamma_2(\bdry V) + \beta^\Gamma_1(V)-\beta^\Gamma_3(V)+\beta^\Gamma_3(\bdry V)-\beta^\Gamma_0(V) \nonumber
\\
b(W) &= \beta^\Gamma_2(W) - \beta^\Gamma_2(\bdry W) + \beta^\Gamma_1(W)-\beta^\Gamma_3(W)+\beta^\Gamma_3(\bdry W)-\beta^\Gamma_0(W). \nonumber
\end{align}
  Recall now that $\bdry V=M(J)$ and $\bdry W$ consists of one copy of $M(J)$ and two of $\Sigma$, so that $\beta^\Gamma_*(\bdry W) = \beta^\Gamma_*(\bdry V)+2\beta^\Gamma_*(\Sigma)$.  Taking advantage of this, 
\begin{align}\label{eqn:rankDifference}
b(V)-b(W) =& (\beta^\Gamma_2(V)-\beta^\Gamma_2(W)) + (\beta^\Gamma_1(V)-\beta^\Gamma_1(W))-(\beta^\Gamma_3(V)-\beta^\Gamma_3(W))
\\&-(\beta^\Gamma_0(V)-\beta^\Gamma_0(W)) +2( \beta^\Gamma_2(\Sigma)-\beta^\Gamma_3(\Sigma))\nonumber
\end{align}

In order to compute $\beta^\Gamma_p(V)-\beta^\Gamma_p(W)$ we use the following long exact sequence:

\begin{equation}\label{seq:Sigma-W-V}
\dots \to H_p(\Sigma;\K(\Gamma))\overset{i^+-i^-}\longrightarrow H_p(W;\K(\Gamma))\to H_p(V;\K(\Gamma))\to H_{p-1}(\Sigma;\K(\Gamma))\to \dots
\end{equation}
Let $K_p=\ker(H_p(\Sigma;\K(\Gamma))\overset{i^+-i^-}\longrightarrow H_p(W;\K(\Gamma)))$ to get the exact sequence:
\begin{equation*}
0\to K_p \to H_p(\Sigma;\K(\Gamma))\overset{i^+-i^-}\longrightarrow H_p(W;\K(\Gamma))\to H_p(V;\K(\Gamma))\to K_{p-1}\to 0
\end{equation*} 
so that if $k_p =\rank_{\K(\Gamma)}(K_p)$ then
\begin{equation}\label{eqn:Kp}\beta_p^\Gamma(V)-\beta^\Gamma_p(W) =k_p-\beta^\Gamma_p(\Sigma)+k_{p-1}.\end{equation}    Making this substitution and simplifying,  equation (\ref{eqn:rankDifference}) becomes:

\begin{align}\label{eqn:rankDifference2}
b(V)-b(W) =& \left(k_2-\beta^\Gamma_2(\Sigma)+k_1\right)+\left(k_1-\beta^\Gamma_1(\Sigma)+k_0\right)
 - \left(k_3 - \beta^\Gamma_3(\Sigma)+k_2\right)  - \left(k_0 -\beta^\Gamma_0(\Sigma)\right)  
 \\&+2 \left(\beta^\Gamma_2(\Sigma)-\beta^\Gamma_3(\Sigma)\right)
 \nonumber\\ =&
 2k_1-k_3+\beta^\Gamma_0(\Sigma)-\beta^\Gamma_1(\Sigma)+\beta^\Gamma_2(\Sigma)-\beta^\Gamma_3(\Sigma)= 2k_1-k_3.
 \nonumber
%\beta_2(V)-\beta_2(W) + \beta_1(V)-\beta_1(W)-\beta_3(V)+\beta_3(W)-\beta_0(V)+\beta_0(W) +2 \beta_2(\Sigma)-2\beta_3(\Sigma)+2\beta_0(\Sigma)
\end{align}
The final equality uses that the Euler characteristic of $\Sigma$ (a closed orientable three manifold) is zero.   Since $\beta^\Gamma_4(V)=\beta^\Gamma_4(W)=\beta^\Gamma_4(\Sigma)=0$ equation~(\ref{eqn:Kp}) reveals that $k_3=0$.  Finally, we see $b(V)-b(W)=2k_1\ge 0$ and $b(V)\ge b(W)$.  This completes the proof.  

%As a remark, it is worth noticing that since we also know that $b(W)\ge 2k$ by the non-degeneracy of the intersection form, we indeed have that $K_1=0$.  

\end{proof}

Next we suppose  that $V$ is an $n.5$-null-bordism and $\Gamma_r^{(n+1)}=0$.  In this setting the dual surfaces $D_i$ may not lift to the $\Gamma$-cover.  Nonetheless we get a similar result.

\begin{thm}\label{thm:n.5IsectionForm} Let $n\in \N$.  Suppose that $V$ is an $n.5$-null-bordism and $\psi:\pi_1(V)\to \Gamma$ is a homomorphism where $\Gamma$ is PTFA and $\Gamma_r^{(n+1)}=0$. Let $W = X(V)$ be the resulting cut open null-bordism.  Suppose additionally that either $\psi:\pi_1(\bdry  V)\to \Gamma$ is trivial, or that $\beta_1^\Gamma(\bdry V)= \beta_1(\bdry V)-1$.  Let $r$ be the $\K(\Gamma)$-rank of the span of  $\{\widetilde L_i\}$ as a subspace of $\frac{H_2(W;\K(\Gamma))}{H_2(\p W;\K(\Gamma))}.$  Then 
$
\rank_{\K(\Gamma)}\left(\frac{H_2(W;\K(\Gamma))}{H_2(\p W;\K(\Gamma))}\right) = 2r
$.
%Thus, the intersection form on $\frac{H_2(W;\K(\Gamma))}{H_2(\p W;\K(\Gamma))}$ is a $2\ell \times 2\ell$ matrix with an $\ell\times \ell$ block of zeros.
\end{thm}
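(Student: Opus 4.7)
The plan is to mimic the proof of Theorem~\ref{thm:nIsectnForm}, compensating for the fact that the duals $D_i$ need not lift to the $\Gamma$-cover when $V$ is only an $n.5$-null-bordism. The key observation is that, inside the non-singular Hermitian intersection form on $\frac{H_2(W;\K(\Gamma))}{H_2(\bdry W;\K(\Gamma))}$, the span $L:=\Span_{\K(\Gamma)}\{\widetilde L_1,\dots,\widetilde L_k\}$ is not merely isotropic but a Lagrangian, $L=L^\perp$, forcing the ambient rank to be exactly $2r$.

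First I would assemble the ingredients. By Proposition~\ref{highOrderSoln.5}, the lifts $\{\widetilde L_i\}$ are $\K(\Gamma)$-linearly independent in $\frac{H_2(V;\K(\Gamma))}{H_2(\bdry V;\K(\Gamma))}$, whose $\K(\Gamma)$-rank is $2k$. Choosing lifts so that $\widetilde L_i\cap \widetilde L_j=\emptyset$ (possible since the $L_i$ are pairwise disjoint), the span $L_V$ is isotropic of rank $k$; this is half of $2k$, so by the non-singularity of the intersection form on $\frac{H_2(V;\K(\Gamma))}{H_2(\bdry V;\K(\Gamma))}$ (Poincar\'e-Lefschetz duality with twisted coefficients, as in \cite{COT,C}), $L_V=L_V^\perp$ inside $V$. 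The same disjointness argument shows $L\subseteq L^\perp$ in $W$; combined with non-singularity of the analogous form, this already gives the lower bound $\rank_{\K(\Gamma)}\!\left(\frac{H_2(W;\K(\Gamma))}{H_2(\bdry W;\K(\Gamma))}\right)\geq 2r$.

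For the matching upper bound I would establish $L^\perp\subseteq L$ directly. Given $\alpha\in L^\perp$, pick a chain-level lift $\widetilde\alpha\in H_2(W;\K(\Gamma))$. Because $\Sigma$ was chosen disjoint from every $L_i$ (Lemma~\ref{Lem:Sigma}), each $\widetilde L_i$ sits in the interior of $W$, so intersection numbers in $W$ and in $V$ agree: $\widetilde\alpha\cdot_W \widetilde L_i=i_*(\widetilde\alpha)\cdot_V \widetilde L_i=0$ for every $i$. Therefore $i_*(\widetilde\alpha)$ lies in $L_V^\perp=L_V$ in the quotient for $V$, and we may write $i_*(\widetilde\alpha)=\Sum a_i \widetilde L_i+z$ in $H_2(V;\K(\Gamma))$ for some $a_i\in \K(\Gamma)$ and $z\in H_2(\bdry V;\K(\Gamma))$. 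Setting $\beta:=\widetilde\alpha-\Sum a_i\widetilde L_i-z\in H_2(W;\K(\Gamma))$ (using $\bdry V\subseteq \bdry W\subseteq W$ to view $z$ in $H_2(W;\K(\Gamma))$), the class $\beta$ maps to zero in $H_2(V;\K(\Gamma))$, so by the Wang-type exact sequence (\ref{seq:Sigma-W-V}) it lies in the image of $i^+-i^-\colon H_2(\Sigma;\K(\Gamma))\to H_2(W;\K(\Gamma))$. But $i^\pm(H_2(\Sigma;\K(\Gamma)))\subseteq H_2(\bdry W;\K(\Gamma))$ since $\Sigma^\pm\subseteq \bdry W$, so $\beta\in H_2(\bdry W;\K(\Gamma))$, whence $\alpha\equiv \Sum a_i\widetilde L_i$ modulo $H_2(\bdry W;\K(\Gamma))$, i.e.\ $\alpha\in L$.

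The principal technical obstacle is verifying that the intersection form on $\frac{H_2(W;\K(\Gamma))}{H_2(\bdry W;\K(\Gamma))}$ really is a non-singular Hermitian form on a finite-dimensional $\K(\Gamma)$-vector space; this relies on twisted Poincar\'e-Lefschetz duality together with the vanishing of low-dimensional twisted homology as in Proposition~\ref{highOrderSoln.5}. A secondary subtlety is the correct identification of intersection numbers under the inclusion $W\hookrightarrow V$, which is precisely why Lemma~\ref{Lem:Sigma} was arranged so that $\Sigma$ misses the Lagrangians.
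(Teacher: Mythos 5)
Your argument is correct but follows a genuinely different and somewhat slicker route than the paper's. Both proofs use the same raw ingredients: Proposition~\ref{highOrderSoln.5} to get that the lifts $\{\widetilde L_i\}$ are independent and that $H_2(V;\K(\Gamma))/H_2(\bdry V;\K(\Gamma))$ has rank $2k$, the Wang sequence~(\ref{seq:Sigma-W-V}) to control $\ker\bigl(j_*\colon H_2(W;\K(\Gamma))\to H_2(V;\K(\Gamma))\bigr)$ (which is carried by $\bdry W$), and nonsingularity of the twisted Poincar\'e--Lefschetz intersection forms. The paper uses nonsingularity in $V$ to build explicit dual classes $\delta_1,\dots,\delta_k$, decomposes $B=\ker\bigl(\bdry_*\colon H_2(V;\K(\Gamma))\to H_1(\Sigma;\K(\Gamma))\bigr)$ into $\bigl(\im H_2(\bdry V;\K(\Gamma))\bigr)\oplus U_1\oplus U_2$ with $U_1=\Span\{\widetilde L_i\}$ and $U_2=\Span\{\delta_i\}\cap B$, and then hand-constructs a basis $\{e_1,\dots,e_r,m_1,\dots,m_r\}$ of $H_2(W;\K(\Gamma))/H_2(\bdry W;\K(\Gamma))$ by lifting a dual pair of bases through $j_*$. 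You dispense with the $\delta_i$ and the decomposition of $B$ altogether: you show $L:=\Span\{\widetilde L_i\}\subseteq H_2(W;\K(\Gamma))/H_2(\bdry W;\K(\Gamma))$ is Lagrangian by verifying $L^\perp\subseteq L$ directly, pushing a class in $L^\perp$ forward to $V$, invoking $L_V^\perp=L_V$ (which follows from Proposition~\ref{highOrderSoln.5} plus nonsingularity of the form on $H_2(V;\K(\Gamma))/H_2(\bdry V;\K(\Gamma))$), and pulling the discrepancy back through the Wang sequence into $H_2(\bdry W;\K(\Gamma))$. The paper's route has the minor advantage of exhibiting an explicit basis; yours reaches the rank conclusion with visibly less bookkeeping, and your separate ``lower bound'' paragraph is actually redundant once $L=L^\perp$ is established, since $\dim L+\dim L^\perp=2r$ is then immediate from nonsingularity.
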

\begin{proof}
By Proposition~\ref{highOrderSoln.5}, the set of lifts $\{\widetilde{L_1},\dots \widetilde{L_k}\}$ is linearly independent in $\frac{H_2(V;\K(\Gamma))}{H_2(\bdry V;\K(\Gamma))}$.  The non-degeneracy of the intersection form 
now implies that there exist classes $\delta_1,\dots \delta_k\in \frac{H_2(V;\K(\Gamma))}{H_2(\bdry V;\K(\Gamma))}$ dual to the $\widetilde L_i$'s.  That is $\delta_i\cdot \widetilde{L_j} = 1$ if $i=j$ and is 0 otherwise.      The set $\{\widetilde{L_1},\dots,\widetilde{L_k},\delta_1,\dots\delta_k\}$ is linearly independent, and so is a basis for $H_2(V;\K(\Gamma))/H_2(\bdry V;\K(\Gamma))$.  
%Thus, the intersection form on $H_2(V)/H_2(\bdry V)$ has matrix,
%$$
%\left[
%\begin{array}{cc}
%0&I\\I&*
%\end{array}
%\right]
%$$
%Where $0$ and $I$ refer to the $k\times k$ zero and identity matrices and  $*$ is an undetermined matrix.

 In order to get information about $W$ we study the exact sequence coming from (\ref{seq:Sigma-W-V}):
$$
 H_2(\Sigma; \K(\Gamma)) \overset{i_*^+ - i_*^-}{\longrightarrow} H_2(W; \K(\Gamma))\overset{j_*}\to B\to 0
$$
where 
$$B = \ker(\bdry_* :H_2(V; \K(\Gamma))\to H_1(\Sigma; \K(\Gamma))).$$
  Since the image of $(i_*^+-i_*^-)$ is carried by $\bdry W$ we see that any preimage under $j_*$ of a generating set of $B$ gives a generating set for $\frac{H_2(W;\K(\Gamma))}{H_2(\bdry W; \K(\Gamma))}$.  

Consider any $i$.  Since ${L_i}$ and ${\Sigma}$ are disjoint,  $\widetilde L_i$ is in the kernel of the map $\bdry_* :H_2(V; \K(\Gamma))\to H_1(\Sigma; \K(\Gamma))$, that is $\widetilde L_i\in B$.  Let $U_1 = \Span\{\widetilde{L_1},\dots \widetilde{L_k}\}\subseteq B$.  Let $U_2 = \Span\{\delta_1,\dots,\delta_k\} \cap B$ be the kernel of the restriction of  
$\bdry_*$ to the $\K(\Gamma)$-span of $\{\delta_1,\dots,\delta_k\}$.  Then 
$$B= \left(\im (H_2(\bdry V;\K(\Gamma))\to H_2(V;\K(\Gamma)))\right)\oplus U_1\oplus U_2$$ and any generating set for $U_1\oplus U_2$ produces a generating set for $\frac{H_2(W;\K(\Gamma))}{H_2(\bdry W;\K(\Gamma))}$.   

If $r$ is the $\K(\Gamma)$-Rank of $U_2$, then pick a basis $\{\epsilon_1,\dots,\epsilon_r\}$ for $U_2$.  Extend this to a full basis $\{\epsilon_1,\dots ,\epsilon_k\}$ for $\Span\{\delta_1,\dots,\delta_k\}\subseteq H_2(V;\K(\Gamma))$.  Pick a new basis $\{M_1,\dots M_k\}$ for $U_1$ which is dual to $\{\epsilon_1,\dots ,\epsilon_k\}$ with respect to the intersection form.  

Let $e_1,\dots e_r$ and $m_1,\dots m_k$ be preimages for $\epsilon_1,\dots \epsilon_r$ and $M_1,\dots M_k$ under the inclusion induced map $j_*:H_2(W;\K(\Gamma))\to H_2(V;\K(\Gamma))$.   Since $j_*$ respects the intersection form,  $e_i\cdot m_j = \epsilon_i \cdot M_j = 1$ if $i=j$ and is $0$ otherwise.  Thus, $e_1,\dots e_r, m_1,\dots m_r$ are linearly independent.  On the other hand if $j>r$ then $m_j$ pairs with every element of a generating set for $H_2(W;\K(\Gamma))$ to be zero under the intersection form.  By the non-singularity of the intersection form, then $m_j = 0$ in $\frac{H_2(W;\K(\Gamma))}{H_2(\bdry W;\K(\Gamma))}$ and $\{e_1,\dots,e_r, m_1,\dots, m_r\}$ forms a basis for $\frac{H_2(W;\K(\Gamma))}{H_2(\bdry W;\K(\Gamma))}$ and $\rank_{\K(\Gamma)}\left(\frac{H_2(W;\K(\Gamma))}{H_2(\p W;\K(\Gamma))}\right) = 2r$.  %Notice then that for all $i, j$ $m_i\cdot m_j = M_i\cdot M_j=0$  since the intersection form vanishes of $U_1$.  With respect to this basis then the intersection form is a $2\ell\times2\ell$ matrix with an $\ell\times \ell$ block of zero's.  

\end{proof}

An important payoff of working with cut open null-bordisms is that the image of $\pi_1(W)\to \pi_1(V)$ is contained in $\pi_1(V)^{(1)}_r$.  If $\pi_1(V)\to \Gamma$ is a homomorphism where $\Gamma^{(n)}_r=0$ and we let $A\le \Gamma$ be the image of $\pi_1(W)$ then $A\le \Gamma^{(1)}_r$.  Thus, $A^{(n-1)}_r\subseteq \Gamma^{(n)}_r=0$.  Instead of studying a coefficient system on $V$ with $\Gamma^{(n)}_r=0$ we can study a coefficient system on $W$ with $A^{(n-1)}_r=0$.  

%For example, if $\Gamma^{(2)}_r =0$ then $A^{(1)}_r=0$, so $A$ is torsion-free-abelian, and is much easier to analyze then the non-abelian group $\Gamma$.  

\begin{cor}\label{cor:IsectnFormCover}Let $n\in \N$ and $V$ be an $n$-null-bordism.  Let $W=X(V)$ be a cut open null-bordism.  Let $\psi:\pi_1(V)\to \Gamma$ be a homomorphism where $\Gamma^{(n)}_r=0$.  Suppose additionally that either $\psi:\pi_1(\bdry  V)\to \Gamma$ is trivial, or that $\rank_{\K(\Gamma)}( H_1(\bdry V;\K(\Gamma)))= \beta_1(\bdry V)-1$.  Suppose that $A\le \Gamma$ is a subgroup containing the image of $\pi_1(W)\to \Gamma$.  Then the collection of lifts $\{\widetilde L_i, \widetilde D_i\}$ forms a basis for
$
\frac{H_2(W;\K(A))}{H_2(\p W;\K(A))}.
$

If $V$ is instead an $n.5$-null-bordism and $\Gamma^{(n+1)}=0$ then the collection of lifts $\{\widetilde L_i\}\subseteq \frac{H_2(W;\K(A))}{H_2(\p W;\K(A))}$ spans a rank $r$ subspace where
$
\rank_{\K(A)}\left(\frac{H_2(W;\K(A))}{H_2(\p W;\K(A))}\right)=2r
$

\end{cor}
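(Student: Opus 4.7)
The plan is to reduce the statement to Theorem~\ref{thm:nIsectnForm} (respectively Theorem~\ref{thm:n.5IsectionForm}) via flat base change from $\K(A)$ to $\K(\Gamma)$. The key observation is that, although the homomorphism $\pi_1(V)\to\Gamma$ need not factor through $A$, the restriction $\pi_1(W)\to\Gamma$ does by hypothesis, and the same applies to $\pi_1(\bdry W)\to \pi_1(W)\to\Gamma$. Consequently the $\Gamma$-cover of $W$ is topologically a disjoint union of copies of the $A$-cover, and at the level of chain complexes one has
$$
C_*(W;\Q[\Gamma])\;\cong\; C_*(W;\Q[A])\otimes_{\Q[A]}\Q[\Gamma],
$$
with the analogous identification on $\bdry W$.

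Next I would observe that the chain of inclusions $\Q[A]\hookrightarrow\Q[\Gamma]\hookrightarrow\K(\Gamma)$ realizes $\Q[A]$ as an Ore subring of the skew field $\K(\Gamma)$, so the universal property of Ore localization yields an embedding of skew fields $\K(A)\hookrightarrow\K(\Gamma)$, under which $\K(\Gamma)$ is free (hence flat) as a $\K(A)$-module. Combining this with the chain-complex identification gives
$$
H_*(W;\K(\Gamma))\;\cong\; H_*(W;\K(A))\otimes_{\K(A)}\K(\Gamma),
$$
and the analogue on $\bdry W$; by flatness this same identity passes to the quotient
$$
\frac{H_2(W;\K(\Gamma))}{H_2(\bdry W;\K(\Gamma))}\;\cong\;\frac{H_2(W;\K(A))}{H_2(\bdry W;\K(A))}\otimes_{\K(A)}\K(\Gamma).
$$
Since base change along an inclusion of skew fields preserves dimension, the $\K(A)$-rank of the right-hand quotient equals the $\K(\Gamma)$-rank of the left. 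Theorem~\ref{thm:nIsectnForm} gives that the latter equals $2k$ in the $n$-null-bordism case, and Theorem~\ref{thm:n.5IsectionForm} gives $2r$ in the $n.5$-null-bordism case.

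To finish, I would verify that $\{\widetilde L_i,\widetilde D_i\}$ (respectively $\{\widetilde L_i\}$) is $\K(A)$-linearly independent in $H_2(W;\K(A))/H_2(\bdry W;\K(A))$ via the same intersection form argument used earlier. The surfaces lift to the $A$-cover because $\pi_1(L_i)$ and $\pi_1(D_i)$ are contained in $\pi_1(V)^{(n)}_r$ (respectively $\pi_1(L_i)\subseteq \pi_1(V)^{(n+1)}_r$ in the $n.5$ case), and the hypothesis $\Gamma^{(n)}_r=0$ (respectively $\Gamma^{(n+1)}_r=0$) forces their images in $\Gamma$ to be trivial, hence to lie in $A$. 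One then chooses lifts so that $\widetilde L_i\cdot\widetilde D_j=\delta_{ij}$ and all nontrivial $A$-translates are disjoint from the chosen lifts; the resulting hyperbolic pairing over $\K(A)$ forces linear independence, and together with the rank count this produces the claimed basis in the $n$-null-bordism case. In the $n.5$ case the same reasoning, combined with the duality argument from the proof of Theorem~\ref{thm:n.5IsectionForm}, shows that the $\K(A)$-span of $\{\widetilde L_i\}$ has rank $r$ inside a quotient of $\K(A)$-rank $2r$. I anticipate no serious obstacle beyond carefully tracking the flat base change and confirming that linear independence transfers from $\K(\Gamma)$-coefficients down to the subfield $\K(A)$.
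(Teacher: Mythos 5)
Your proposal is correct and takes essentially the same approach as the paper: the paper's proof simply observes that $\K(\Gamma)$ is flat over $\K(A)$, so that $H_p(W;\K(\Gamma))\cong H_p(W;\K(A))\otimes_{\K(A)}\K(\Gamma)$ and similarly for $\bdry W$, then invokes Theorems~\ref{thm:nIsectnForm} and~\ref{thm:n.5IsectionForm}. You have unpacked the same flat-base-change argument in more detail, including the factoring of $\pi_1(W)\to\Gamma$ through $A$ and the Ore-localization embedding $\K(A)\hookrightarrow\K(\Gamma)$, which is what justifies the flatness claim.
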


\begin{proof}
To see this one needs only notice that $\K(\Gamma)$ is flat over $\K(A)$, so that $H_p(W;\K(\Gamma))\cong H_p(W;\K(A))\otimes_{\K(A)} \K(\Gamma)$ and $H_p(\bdry W;\K(\Gamma))\cong H_p(\bdry W;\K(A))\otimes_{\K(A)} \K(\Gamma)$.  
\end{proof}

The reason we chose the letter $A$ for this group comes from the case $n=2$.  Under the map 
$$
\pi_1(V)\to \dfrac{\pi_1(V)}{\pi_1(V)^{(2)}_r}\into \Gamma:=\Z\ltimes \A(V)
$$
the image of $\pi_1(W)$ is contained in a finitely generated subgroup $A$ of the $\Q$-vector space $\A(V)$.  Under the hypothesis that $V$ is a non-degenerate null-bordism for $J$ then, the map $H_1(M_J)\to A$ is not the trivial map.  By studying invariants of $W$ corresponding to maps to the free abelian group $A$ we recover abelian invariants of $M_J$.  In the following section we make this explicit, using signatures of $W$ to compute the Levine-Tristram signature of $M(J)$.

\section{Cut open $1.5$-null-bordisms and signature invariants}\label{sect:sign}

First we recall the normalized Levine-Tristram signature function for a knot $J$, 
$\sigma_J:\T\to \Z$ where $\T = \{\omega\in \C:|\omega|=1\}$ is the unit circle in $\C$.  We will often just call $\sigma_J$ the signature function of $J$.

      Let $S$ be a Seifert matrix for $J$, for any $\omega\in\T$, the matrix $(1-\omega)S + (1-\omega^{-1})S^T$ is Hermitian.  The signature function of $J$ at $\omega$ is defined to be the signature of this matrix.  That is, the number of positive eigenvalues minus the number of negative eigenvalues (counted with multiplicity).  Away from the roots of $\Delta_J$ this gives a concordance invariant.  At the roots of $\Delta_J$, redefine $\sigma_J$ to be the average of its one-sided limits $$\sigma_J(e^{i\theta}) = \frac{1}{2}\left(\Lim_{t\to \theta^+} \sigma_J(e^{it}) +\Lim_{t\to \theta^-} \sigma_J(e^{it})\right).$$
      
  Suppose that $J$ is a knot and $V$ is a null-bordism for $M_J$.  Let $\mu_J$ be the meridian of $J$.  Since $\A(V)$ is torsion, $p(t)\cdot \mu_J=0$ for some $p(t)\in \Q[t,t^{-1}]$.  The \text{annihilator} of $\mu_J$ is the minimal degree polynomial with $p(t)\cdot \mu_J=0$.  

 In the case that the annihilator of $\mu_J$ has a factor in common with $at^{p}-b$ for some $a, b ,p\in \Z$ then we will be able to conclude that 
$$
\sigma_J(\omega^k) = g(\omega^{b})-g(\omega^a)
$$
 for some $k\in \N$ and some function $g:\T\to \Z$. In subsection \ref{subsect:Cooper}, we use this result to re-derive Cooper's signature condition (Theorem~\ref{Thm: Cooper sign 1}).   On the other hand if the annihilator has any factors which are coprime to $at^{p}-b$ for all $a, b ,p\in \Z$, then we will conclude remarkably that the signature function is identically zero.

Since our computation is done in terms of $\rho$-invariants we now recall some of their theory.  In \cite{APS2}, Atiyah, Patodi and Singer associate to a 3-manifold $M$ and a unitary representation $\alpha$ of $\pi_1(M)$ an integer, $\rho(M,\alpha)$.  We make use of a reformulation due to Casson and Gordon \cite{CG1, CG2}.  
 
 \begin{defn}\label{definition of unitary rho}
  For closed connected oriented 3-manifolds $M_1,\dots, M_k$, suppose that $W$ is a compact connected oriented  4-manifold with  $\bdry W = \underset{j=1}{\overset{k}{\sqcup}}M_j$.  Let  $\alpha:\pi_1(W)\to U(n)$ be a unitary representation of $\pi_1(W)$.  Let $\alpha_j = \alpha\circ (i_j)_*$ be the representation of $\pi_1(M_j)$ given by composition with the inclusion induced map $(i_j)_*:\pi_1(M_j)\to \pi_1(W)$.  Then
%\begin{equation}\label{defn: unitary rho}
$$
\displaystyle\sum_{j=1}^k\rho(M_j,\alpha_j)=\sigma(W,\alpha)-n\cdot\sigma(W).
$$
%\end{equation}
\end{defn}
The untwisted signature of $W$, $\sigma(W)$ is given by taking the signature of a matrix for the intersection form on $H_2(W)/H_2(\bdry W)$.  The twisted signature $\sigma(W,\alpha)$ is given by doing the same with the twisted intersection form on $H_2^\alpha(W;\C^n)/H_2^\alpha(\bdry W;\C^n)$.

Importantly, according to \cite{Lith1}, the Levine-Tristram signature function can be understood as a $\rho$-invariant.  To be precise, for a knot $J$ and the representation $\alpha:\pi_1(M_J)\to U(1)$ which sends $\mu_J$ to the $1\times 1$ matrix $[\omega]$, $\sigma_J(\omega) = \rho(M_J, \alpha)$.

With the background now out of the way we state main result of this section:

\begin{thm}\label{thm: T*-1 sign}
Let $J$ be a knot.  Suppose that $V$ is a non-degenerate $1.5$-null-bordism for $M_J$.  Let $\mu_J$ be the class of the meridian of $J$ in the Alexander module, $\A(V)$.  Let $\delta(t)$ be an irreducible factor of the annihilator of $\eta$ in $\A(V)$.

\begin{enumerate}
\item\label{non-eigencase} In the case that $\delta(t)$ is relatively prime to  $at^p-b$ for all $a,b,p\in \Z-\{0\}$ then  for all $\omega\in\T$ $\sigma_J(\omega)=0$.
\item\label{eigencase} In the case that $\delta(t)$ divides $at^p-b$, there is a function $g:\T\to{\R} $ and nonzero integer $k$ such that for 
all $\omega\in\T$ 
$$\sigma_J(\omega^{k}) = 
g(\omega^{b}) - 
g(\omega^{a})$$
Moreover we may choose $g$  to be continuous away from finitely many algebraic numbers where $g$ has jump discontinuities.
\end{enumerate}
\end{thm}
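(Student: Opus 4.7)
The plan is to apply the Atiyah--Patodi--Singer $\rho$-invariant formula (Definition~\ref{definition of unitary rho}) to the cut open $1.5$-null-bordism $W = X(V)$, whose boundary is $M_J \sqcup \Sigma^+ \sqcup \Sigma^-$, and to use the structural results of Section~\ref{sect:cut open} to force vanishing of the twisted signature of $W$. Let $A \le \A(V)$ denote the image of $\pi_1(W)$ under the canonical composition $\pi_1(V) \to \Gamma := \Z \ltimes \A(V)$. Non-degeneracy of $V$ ensures that the class of $\mu_J$ is nonzero in $A$, and $\delta(t)\mu_J = 0$ allows us to pass to the $\delta$-primary summand of $A$. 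The $t$-action on $\A(V)$ dualizes to a $\Z$-action on $\hat A := \Hom(A,U(1))$, and because the two inclusions $\pi_1(\Sigma^\pm) \hookrightarrow \pi_1(W) \to A$ differ precisely by this deck translation, one has $\chi|_{\Sigma^-} = \chi \circ t$ for every $\chi \in \hat A$.

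For a one-dimensional unitary character $\chi \in \hat A$, applying APS to $W$ and accounting for the reversed orientation on $\Sigma^-$ gives
$$
\rho(M_J, \chi|_{M_J}) \;=\; \rho(\Sigma, \chi \circ t) - \rho(\Sigma, \chi) + \bigl(\sigma(W,\chi) - \sigma(W)\bigr).
$$
The main technical step, which I expect to be the principal obstacle, is to prove that $\sigma(W,\chi) - \sigma(W) = 0$ for all but finitely many $\chi$. This should follow from Corollary~\ref{cor:IsectnFormCover}: since $A$ is abelian, the hypothesis $\pi_1(L_i) \subseteq \pi_1(V)^{(2)}_r$ forces the Lagrangian surfaces to lift to the $A$-cover, and their lifts span a half-rank subspace of $H_2(W;\K(A))/H_2(\bdry W;\K(A))$ (of total rank $2r$) on which the twisted intersection form is identically zero. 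Specializing $\K(A)$-coefficients to $\C$ through $\chi$ preserves this half-rank isotropic structure off a finite algebraic subvariety of $\hat A$, forcing both $\sigma(W,\chi)$ and $\sigma(W)$ to vanish generically.

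Setting $g(\chi) := \rho(\Sigma,\chi)$ on $\hat A$, the APS identity reads $\sigma_J(\chi(\mu_J)) = g(\chi \circ t) - g(\chi)$. In case~(\ref{eigencase}) the divisibility $\delta(t) \mid at^p - b$ yields the algebraic relation $a(t^p\mu_J) = b\mu_J$ inside the $\delta$-primary summand, so every character there satisfies $\chi(t^p\mu_J)^a = \chi(\mu_J)^b$. Using this constraint to parametrize characters by $\omega := \chi(\mu_J)^{1/k}$, where $k$ is the divisibility index of the class of $\mu_J$ in $A$, and iterating the APS identity $p$ times, the telescoping sum $\sum_{j=0}^{p-1}\sigma_J(\chi(t^j\mu_J)) = g(\chi\circ t^p) - g(\chi)$ repackages as $\sigma_J(\omega^k) = g(\omega^b) - g(\omega^a)$ for a function $g:\T \to \R$; continuity of $g$ off a finite algebraic set follows from the standard continuity of $\rho$-invariants away from the loci where the twisted Alexander modules of $\Sigma$ change rank. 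In case~(\ref{non-eigencase}) coprimality of $\delta$ with every $at^p-b$ forces the $t^*$-orbit of any nontrivial character in the $\delta$-primary summand to have dense image in $\T$ under evaluation at $\mu_J$; combining the telescoping identity $\sum_{n=0}^{N-1}\sigma_J(\chi(t^n\mu_J)) = g(\chi \circ t^N) - g(\chi)$ with boundedness of $g$ and integer-valuedness of $\sigma_J$ then forces $\sigma_J \equiv 0$.
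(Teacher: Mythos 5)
Your overall strategy matches the paper's: apply the Atiyah--Patodi--Singer formula to the cut open $1.5$-null-bordism $W$, kill the twisted signature $\sigma(W,\chi)$ via Corollary~\ref{cor:IsectnFormCover}, and deduce a functional equation for $\sigma_J$ in terms of the $\rho$-invariants of $\Sigma^\pm$. The vanishing of $\sigma(W,\chi)$ off a proper algebraic subset, by the half-rank isotropic span of the lifted Lagrangians, is indeed the right first half of the argument.

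However, the second half --- extracting the two conclusions from the functional equation --- has genuine gaps. First, you treat $\chi \mapsto \chi\circ t$ as a self-map of $\hat A$, but the $t$-action on $\A_\delta$ is by a rational (not integral) matrix, so $t$ need not preserve the finitely generated lattice $A = \im(\pi_1(W)\to\A_\delta)$: if $\delta(t) = at-b$ then $t$ acts by multiplication by $b/a$. The paper resolves this by introducing $T = d\cdot t$ (with $d$ a multiple of the leading coefficient of $\delta$) and replacing the comparison $g(\chi\circ t) - g(\chi)$ by $f(\omega^M) - f(\omega^d)$; without this the identity you iterate is not even well defined on $A$. Second, and more seriously, your treatment of case~(\ref{eigencase}) does not yield the theorem. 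Iterating the APS identity $p$ times gives a telescoping sum $\sum_{j=0}^{p-1}\sigma_J(\chi(t^j\mu_J)) = g(\chi\circ t^p) - g(\chi)$, which has $p$ signature terms on the left, and no elementary manipulation converts this into the single-term identity $\sigma_J(\omega^k) = g(\omega^b) - g(\omega^a)$. The paper instead proves the purely analytic Lemma~\ref{lem: analysis lemma}, working in $L^2(\T^n)$: one matches Fourier coefficients of both sides of $s(\omega_1) = f(\omega^M) - f(\omega^d)$, uses the vanishing of the off-axis coefficients (from equation~(\ref{difference 2a})) to propagate a chain $a_{M^{-1}J} = a_{dM^{-2}J} = \cdots$, and then the eigenvector hypothesis $M^p e_1 = \lambda e_1$ collapses this chain into a single reindexing of the Fourier series, giving a function $g$ on $\T$ (which turns out to be the integral of $f$ over the last $n-1$ coordinates). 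In case~(\ref{non-eigencase}), the infinite non-repeating chain forces every off-axis coefficient to vanish by $\ell^2$-summability; your boundedness-plus-density argument only constrains the mean of $\sigma_J$ along one orbit, which is far weaker than vanishing of all Fourier coefficients (and hence of $\sigma_J$). The Fourier-analytic lemma is the genuinely new ingredient, and it is what is missing from your proposal.
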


Before we begin the proof, we apply it to prove Theorems~\ref{thm:genusonesign} and \ref{thm:doublingopsign}.  

\begin{proof}[Proof of Theorem~\ref{thm:genusonesign}]
Let $K$ be a genus one $1.5$-solvable knot with $\Delta_R(t) = \delta(t)\delta(t^{-1})$ for some $\delta(t) =  (mt-(m+1))$ with $m\notin\{0,-1\}$.  Suppose that the knot $J$ is a derivative associated to a $1.5$-solution for $K$. By Proposition~\ref{prop:NullBordismDeriv} $J$ admits a non-degenerate $1.5$-null-bordism $V$ such that the annihilator of $\mu_J$ has one of $\delta(t)$ and $\delta(t^{-1})\dot=((m+1)t-m)$ as a factor.  Case (2) of Theorem~\ref{thm: T*-1 sign} now applies and we see that there exists $k\in \N$ and a function $g:\T\to\R$ such that either 
$$
\sigma_J(\omega^k) = g(\omega^m)-g(\omega^{m+1})
\text{ or }
\sigma_J(\omega^k) = g(\omega^{m+1})-g(\omega^{m})
$$
replacing $g(\omega)$ by $-g(\omega)$ if needed completes the proof of Theorem~\ref{thm:genusonesign}.
\end{proof}

\begin{proof}[Proof of Theorem~\ref{thm:doublingopsign}]
Suppose that $R_\eta$ is a doubling operator such that for every isotropic submodule $Q$ there is a slice disk $\Delta$ for $R$ satisfying that $\eta\notin Q+P_\Delta$.  Suppose furthermore that $\Delta_R(t)$ is coprime to every polynomial of the form $at^p-b$.  

  Now suppose that $R_\eta(J)$ is $1.5$-solvable.  According to Proposition~\ref{Prop:NullBordismInfection} $M_J$ bounds a  non-degenerate 1.5-null-bordism $V$.  Moreover, as a class in $\A(V)$ the meridian of $J$ is annihilated by $\Delta_R$ which by assumption is coprime to $at^p-b$.  Thus, the annihilator has an irreducible factor coprime to $at^p-b$ for all $a,b,p\in \Z-\{0\}$.   Case (1) of Theorem~\ref{thm: T*-1 sign} now reveals that $\sigma_J(\omega)$ is identically zero.  
\end{proof}

\begin{proof}[Proof of Theorem~\ref{thm: T*-1 sign}]
Let $J$ be a knot.  Suppose  that $V$ is a non-degenerate $1.5$-null-bordism for $M_J$.  Since $V$ is non-degenerate, $\mu_J$ is non-zero in $\A(V)$.  Let $\delta(t)\in \Z[t,t^{-1}]$ be a irreducible factor of the annihilator of $\mu_J$ in $\A(V)$.  Since $\A(V)$ is a finitely generated module over the PID $\Q[t,t^{-1}]$, it has a cyclic summand $\A_\delta\cong\dfrac{\Q[t,t^{-1}]}{(\delta(t))^q}$ $q\in \N$ such that the projection map $\A(V)\to \A_\delta$ does not send $\eta$ to the zero element.  Let $\ell$ be the minimal positive integer such that $\delta(t)^\ell \eta=0$ in $\A_\delta$.  There is a generator $g$ for $\A_\delta$ such that $\delta(t)^{q-\ell} g =\eta$.  
%In the first case of the theorem, $\delta(t)$ is a factor of some $at^p-b$. 
 If $p$ is the degree of the polynomial $\delta$ then as a rational vector space, $\A_\delta$ has basis,
\begin{equation}\label{basis}\mathcal{B} = \left\{
%\begin{array}{cccc}
t^i\delta(t)^j g: 0\le i<p,0\le j<q
%%g,&tg,& \dots,& t^{p-1}g\\
%\delta(t)g,&t\delta(t)g,& \dots,& t^{p-1}\delta(t)g\\
%\vdots&\vdots&\ddots&\vdots\\
%\delta(t)^{q-1}g,&t\delta(t)^{q-1}g,& \dots,& t^{p-1}\delta(t)^{q-1}g
%\end{array}
\right\}.\end{equation}

Let $\Sigma$ be a 3-manifold dual to the generator of $H_1(V)$ and $W=X(V,\Sigma)$ be the resulting cut open $1.5$-null-bordism.  Notice that $W$ lifts to the free abelian cover of $V$.  Since $\pi_1(W)$ is finitely generated, it follows that the image of $\pi_1(W)\to \A(V)\to \A_\delta$ is finitely generated.  By replacing $g$ with $\dfrac{1}{k}\cdot g$ for some $k\in \N$ if necessary, we can assume that the image of $\pi_1(W)\to \A(V)\to \A_\delta$ is contained in the $\Z$-module generated by $\mathcal B$.  Let $A$ be that $\Z$-submodule.  
%
%
%%  Recall the $\frac{\pi_1(V)}{\pi_1(V)^{(2)}_\Q}\cong \Z\ltimes A$.  By assumption the cut $2$-null-bordism  $W$ lifts to the infinite cyclic cover of $V$ so that $\pi_1(W)$ naturally maps to the abelian group $A$.  Let $H = \im(\pi_1(W)\to A)$.  Since $H$ is a finitely generated subgroup of  a torsion-free abelian group, $H\cong \Z^k$ is free abelian.  By adding elements to $H$ if needed we can ensure that $H\le A$ is full rank.
%
%
Observe then that if $\psi$ is the homomorphism $\pi_1(V)\to \dfrac{\pi_1(V)}{\pi_1(V)^{(2)}}\into \Z\ltimes \A(V) \to  \Z\ltimes \A_\delta$ then $\psi[\pi_1(W)]\subseteq A$.  According to Corollary~\ref{cor:IsectnFormCover}, for some $r\in \N$ $\rank_{\Q(A)}\left(\frac{H_2(W;\Q(A))}{H_2(\bdry W;\Q(A))}\right)=2r$ and there is a rank $r$ subspace on which the intersection form is zero.  We use the notation $\Q(A)$ in place of $\K(A)$ since $\Q[A]$ is commutative and the skew field completion agrees with the classical field of fractions.  

%Fix a basis $\{b_1,\dots b_n\}$ of $H$ satisfying that for some $k\in \N$, $k\cdot b_1 = \mu_J$.  Let ${\omega} = (\omega_1,\dots,\omega_n)\in \T^n$ be an $n$-tuple of elements of $\T$  such that ${\omega}$ is transcendental.  (That is, there exists no $n$-variable polynomial with rational coefficients such that $p(\omega_1,\dots,\omega_n) = 0$.)  

By reordering the basis $\mathcal{B}$ for $A$ above we get a basis $\{b_1,\dots b_n\}$ for which $k\cdot b_1 = \eta$.  For any ${\omega} = (\omega_1,\dots,\omega_n)\in \T^n$ (The $n$-fold cartesian product of $\T$ with itself) let $\phi_\omega:A\to U(1)$ be the map sending $b_i\mapsto [\omega_i]$.  (The $1\times 1$ matrix.)  If $\{\omega_1,\dots \omega_n\}$ is algebraically independent over $\Z$ then $\phi_\omega$ is injective and it is easy to see that $\phi_\omega$ extends to a map $\Q(A)\to \Gl(1)$. (The group of $1\times 1$ invertible matrices.) Since $\Gl(1)$ acts on $\C$, we may now regard $\C$ as a module over $\Q(A)$.  Modules over the field $\Q(A)$ are free and so are flat.  Thus, we see that:

\begin{prop}\label{prop: zero signature}
Let $W$ be a cut open $1.5$-null-bordism for $M_J$,  Provided that $\{\omega_1,\dots \omega_n\}\subseteq \T^n$ is  algebraically independent, the complex vector space $H_2^{\phi_\omega}(W;\C) \cong H_2(W;\Q(A))\otimes \C$ has complex dimension $2r$ for some $r\in \N$ and there is a $r$-dimensional subspace on which the intersection form vanishes.
\end{prop}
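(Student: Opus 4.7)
The plan is to verify the hypotheses of Corollary~\ref{cor:IsectnFormCover} for $V$ as a $1.5$-null-bordism with the coefficient system $\Gamma = \Z \ltimes \A_\delta$, and then transfer its conclusions to $\C$-coefficients via the field extension $\Q(A) \hookrightarrow \C$ determined by the algebraically independent tuple $\omega$.

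First I would check that $\Gamma := \Z \ltimes \A_\delta$ is PTFA with $\Gamma^{(2)}_r = 0$: the normal subgroup $\A_\delta$ is torsion-free abelian as a quotient of a $\Q$-vector space, and $\Gamma/\A_\delta \cong \Z$ is as well. Since $V$ is non-degenerate, $\psi$ sends $\mu_J$ to the nonzero element $\eta \in \A_\delta \subseteq \Gamma$, so $\psi$ is nontrivial on $\pi_1(M_J)$, and \cite[Proposition 3.10]{C} supplies the needed equality $\rank_{\K(\Gamma)} H_1(M_J;\K(\Gamma)) = \beta_1(M_J) - 1$. Taking $A \le \A_\delta \le \Gamma$ to be the finitely generated free abelian subgroup constructed above (which contains the image of $\pi_1(W)$), Corollary~\ref{cor:IsectnFormCover} yields
\begin{equation*}
\rank_{\Q(A)}\left(\frac{H_2(W;\Q(A))}{H_2(\p W;\Q(A))}\right) = 2r,
\end{equation*}
with the lifts $\widetilde{L_i}$ of the Lagrangians (which lift since $\pi_1(L_i) \subseteq \pi_1(V)^{(2)}_r \subseteq \ker\psi$) spanning an $r$-dimensional subspace.

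Second, I would promote the $\Q(A)$-statement to $\C$ via the embedding $\phi_\omega$. Algebraic independence of $\omega_1,\dots,\omega_n$ over $\Q$ means that $\phi_\omega : \Q[A] \to \C$ sending $b_i \mapsto \omega_i$ has trivial kernel; since $\Q[A]$ is a commutative Ore domain whose skew-field of fractions coincides with $\Q(A)$, $\phi_\omega$ extends uniquely to a field embedding $\Q(A) \hookrightarrow \C$. This makes $\C$ a $\Q(A)$-vector space and hence flat over $\Q(A)$, so tensoring commutes both with homology and with the quotient by boundary, giving
\begin{equation*}
\frac{H_2^{\phi_\omega}(W;\C)}{H_2^{\phi_\omega}(\p W;\C)} \cong \frac{H_2(W;\Q(A))}{H_2(\p W;\Q(A))} \otimes_{\Q(A)} \C,
\end{equation*}
which inherits complex dimension $2r$ together with the $r$-dimensional complex subspace spanned by the lifted Lagrangians.

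To see that the twisted intersection form vanishes on this Lagrangian subspace, note that the $L_i$ are pairwise disjoint in $V$ by Definition~\ref{defn:NullBordism} and disjoint from $\Sigma$ by Lemma~\ref{Lem:Sigma}, hence remain pairwise disjoint in $W$. Choosing the lifts $\widetilde{L_i}$ compatibly, any two translates $g \widetilde{L_i}$ and $h \widetilde{L_j}$ with $(i,g) \ne (j,h)$ are disjoint in the $A$-cover, and the self-intersection of each $L_i$ already vanishes in $V$ (a standard feature of the Lagrangian/dual basis of an $n.5$-null-bordism), so every equivariant pairing $\widetilde{L_i} \cdot \widetilde{L_j}$ is zero. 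The main obstacle is the bookkeeping across three successive coefficient systems — $\K(\Gamma)$, $\Q(A)$ (which for abelian $A$ coincides with the classical field of fractions), and $\C$ via $\phi_\omega$ — together with the verification that Corollary~\ref{cor:IsectnFormCover} really does apply with this choice of subgroup $A$; once these compatibilities are in place, the flatness of $\C$ over the field $\Q(A)$ does the rest.
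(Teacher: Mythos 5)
Your proof is correct and follows essentially the same route as the paper: invoke Corollary~\ref{cor:IsectnFormCover} (via Theorem~\ref{thm:n.5IsectionForm}) with $\Gamma = \Z\ltimes\A_\delta$ and $A$ the finitely generated subgroup of $\A_\delta$ carrying the image of $\pi_1(W)$, then promote from $\Q(A)$ to $\C$ using the field embedding $\phi_\omega$ furnished by algebraic independence and the flatness that follows. You add more explicit detail than the paper on two points it leaves implicit — verifying that $\Gamma$ is PTFA with $\Gamma^{(2)}_r = 0$ and satisfies the Betti number hypothesis, and spelling out why the span of the $\widetilde L_i$ is isotropic (pairwise and equivariant disjointness plus vanishing self-intersection, the latter part of the COT Lagrangian condition though not written explicitly in Definition~\ref{defn:n-sol}) — but the structure of the argument is the same.
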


Thus, for some basis, the twisted  intersection form on $H_2^{\phi_\omega}(W;\C)$ is given by a $2r\times 2r$ non-singular Hermitian matrix with an $r\times r$ block of zeros.  As a consequence, the twisted signature is zero (See for example \cite[Chapter 8, Lemma 16]{R}.)  By Proposition~\ref{prop:IsectnForm} $\sigma(W)=0$.  Hence, the sum of the unitary $\rho$-invariants of components of the boundary of $W$ also vanishes.
$$
\rho\left(M_J, \phi_\omega\circ i_J\right) + 
\rho\left(\Sigma^+, \phi_\omega\circ i_+\right) + 
\rho\left(\Sigma^-, \phi_\omega\circ i_-\right) = 0
$$
where $i_J:\pi_1(M(J)) \to \pi_1(W) \to A$, $i_+:\pi_1(\Sigma^+) \to \pi_1(W) \to A$ and $i_-:\pi_1(\Sigma^-) \to \pi_1(W) \to A$ are the composition of the inclusion induced map with the map $\pi_1(W)\to A$.  The map $\phi_\omega$ sends $b_1$ to $\omega_1$ so that $\phi_\omega(\mu_J) = \phi_\omega(k\cdot b_1) = \omega_1^k$.  According to \cite{Lith1} this $\rho$-invariant of $M_J$ gives the Levine-Tristram signature:
\begin{equation}\label{eqn: vanish rho}
\sigma_J(\omega_1^k) = \rho\left(M_J,\phi_\omega\circ i_J\right) = -\rho\left(\Sigma^+, \phi_\omega\circ i_+\right) - 
\rho\left(\Sigma^-, \phi_\omega\circ i_-\right).
\end{equation}
If we want to understand the Levine-Tristram signatures of $J$ then it remains to understand the $\rho$-invariants of $\Sigma^+$ and $\Sigma^-$.  First recall that by their definition (as the positive and negative pushoffs of the 3-manifold $\Sigma$ dual to the generator of $H_1(V)$) there is an orientation reversing diffeomorphism $t:\Sigma^- \to \Sigma^+$ making the following diagram commute:
\begin{equation}
\begin{diagram}
\node{\pi_1(\Sigma^-)}\arrow{e,t}{i_-}\arrow{s,l}{t}\node{A}\arrow{e}\node{\A_\delta}\arrow{s,l}{t_*}\\
\node{\pi_1(\Sigma^+)}\arrow{e,t}{i_+}\node{A}\arrow{e}\node{\A_\delta}
\end{diagram}
\end{equation}
The map $t:\A_\delta\to \A_\delta$ is given by the $\Q[t^{\pm1}]$-module structure on $\A_\delta$.  It is possible that the free abelian group $A$ will not be closed under multiplication by $t$.  Indeed, consider the case that $\A_\delta = \dfrac{\Q[t,t^{-1}]}{(m+1)t-m}\cong \Q$.  ($m\notin \{0,-1\}$.)   On $\A_\delta$,  $t$ acts by multiplication by $\dfrac{m}{m+1}$.  Thus, if $A\cong \Z$ were closed under multiplication by $t$ then $\Z$ would be closed under multiplication by $\dfrac{m}{m+1}$, which is not true.

%Indeed, $t\cdot(t^{m-1}\cdot \delta(t)^\ell)$ is not in the integral span of $\mathcal B$. 

 In order to correct this let $d$ be any multiple of the leading coefficient of $\delta(t)$ and $T = d\cdot t$.  It is now easy to check that $T$ sends each element of the basis $\mathcal B$ of (\ref{basis}) to an integer linear combination of elements of $\mathcal B$.  Thus, $T$ sends $A$ to $A$ and we have the following commutative diagram, where $\times d$ indicates the map given by multiplication by $d\in \N$.
\begin{equation}
\begin{diagram}
\node{\pi_1(\Sigma^-)}\arrow{e,t}{i_-}\arrow{s,l}{t}\node{A}\arrow{se,t}{T}\\
\node{\pi_1(\Sigma^+)}\arrow{e,t}{i_+}\node{A}\arrow{e,t}{\times d} \node{A}\arrow{e,t}{\phi_\omega}\node{\C^X}
\end{diagram}
\end{equation}
Since $\rho$  is a diffeomorphism invariant it follows that
\begin{equation}\label{eqn: rho sigma}
-\rho\left(\Sigma^-, (\phi_\omega)\circ T\circ i_-\right) = \rho\left(\Sigma^+, (\phi_\omega)\circ (\times d)\circ i_+\right).
\end{equation}
Equations~(\ref{eqn: vanish rho}) and (\ref{eqn: rho sigma}) together imply that
$$
\sigma_J(\omega_1^{d\cdot k}) = \rho\left(\Sigma^-, \phi_\omega\circ T\circ i_-\right) - \rho\left(\Sigma^-, \phi_\omega\circ (\times d)\circ i_-\right)
$$

We now define a function $f:\T^n\to \R$ by $f(z) =  \rho\left(\Sigma^-, \phi_z\circ i_-\right)$.  According to \cite[Chapter II  Theorem 2.1]{L6} there is a stratification $\T^n = S_1\supseteq S_2\supseteq\dots \supseteq S_\ell = \emptyset$ where $S_1,S_2\dots S_\ell$ are algebraic sets and $f$ is continuous on $S_k-S_{k+1}$.  Moreover $f$ has only integer jumps, meaning $f$ is continuous when reduced mod $\Z$.

We will also need to let matrices act on $\T^n$.  For $\omega=(\omega_1,\dots\omega_n)$, $d\in \Z$ and  and $M = (m_{i,j})$ a matrix with integral entries we define $\omega^d = (\omega_1^d,\omega_2^d,\dots, \omega_n^d)$ and 
\begin{equation}
\label{exponential matrix}\omega^M = \left(\prod_{k=1}^n\omega_k^{m_{k,1}}, \prod_{k=1}^n\omega_k^{m_{k,2}},\dots , \prod_{k=1}^n\omega_k^{m_{k,n}}\right).
\end{equation}
The second definition comes from the identification of $\T^n$ with the quotient $\R^n/\Z^n$ on which integral matrices act by multiplication on the right.

\begin{lem} 
Let $M$ be a matrix for $T$ with respect to the basis $\mathcal{B}=\{b_1,\dots, b_n\}$.  Then for all $\omega\in \T^n$, 
$$
\rho(\Sigma^-, \phi_{ \omega}\circ (\times d)\circ i_-) = f(\omega^d)
$$
and 
$$
\rho(\Sigma^-, \phi_{\omega}\circ T\circ i_-) = f(\omega^{M}).
$$
%Where $M^*$ is the transpose $M$.  
\end{lem}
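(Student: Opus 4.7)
The plan is to unpack the definitions of $\phi_\omega$, of $\times d$, and of $T$ on the basis $\mathcal B$, and to observe that precomposing the character $\phi_\omega:A\to U(1)$ with either an integer multiplication or with $T$ simply produces another character $\phi_{\omega'}$ for an appropriate $\omega'\in\T^n$. Once that character identity is established at the level of $A$, the $\rho$-invariant identity follows immediately by applying the definition $f(z):=\rho(\Sigma^-,\phi_z\circ i_-)$.

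For the first equality, I would compute directly. For each basis element $b_i\in\mathcal B$,
\[
\bigl(\phi_\omega\circ(\times d)\bigr)(b_i)=\phi_\omega(d\cdot b_i)=[\omega_i^{\,d}]=\phi_{\omega^{d}}(b_i),
\]
so the characters $\phi_\omega\circ(\times d)$ and $\phi_{\omega^d}$ agree on a generating set of the free abelian group $A$, hence coincide. Postcomposing with $i_-$ gives the same representation of $\pi_1(\Sigma^-)$, so by the definition of $f$,
\[
\rho\bigl(\Sigma^-,\phi_\omega\circ(\times d)\circ i_-\bigr)=\rho\bigl(\Sigma^-,\phi_{\omega^d}\circ i_-\bigr)=f(\omega^d).
\]

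For the second equality, I would use the description of $M=(m_{i,j})$ as the matrix of $T$ in the basis $\mathcal B$, which by definition means $T(b_j)=\sum_{k=1}^n m_{k,j}\,b_k$. Then
\[
\bigl(\phi_\omega\circ T\bigr)(b_j)=\phi_\omega\!\left(\sum_{k=1}^n m_{k,j}b_k\right)=\prod_{k=1}^n\omega_k^{m_{k,j}},
\]
which is exactly the $j$-th coordinate of $\omega^{M}$ as defined in equation~(\ref{exponential matrix}). Thus $\phi_\omega\circ T=\phi_{\omega^M}$ on $A$, and composing with $i_-$ yields
\[
\rho\bigl(\Sigma^-,\phi_\omega\circ T\circ i_-\bigr)=\rho\bigl(\Sigma^-,\phi_{\omega^M}\circ i_-\bigr)=f(\omega^M).
\]

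There is essentially no obstacle here; this is a bookkeeping lemma whose content is that the action of integer matrices on $\T^n$ defined in~(\ref{exponential matrix}) is precisely the one that corresponds under Pontryagin duality to the action of those matrices on $A\cong\Z^n$. The only minor subtlety worth confirming is well-definedness: one must know that $T$ really does send $A$ into $A$ so that $M$ has integer entries, but this is exactly what was arranged in the preceding paragraph by replacing $t$ with $T=d\cdot t$ for $d$ a multiple of the leading coefficient of $\delta(t)$.
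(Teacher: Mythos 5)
Your proof is correct and follows essentially the same approach as the paper: verify on the basis $\mathcal{B}$ that $\phi_\omega\circ(\times d)=\phi_{\omega^d}$ and $\phi_\omega\circ T=\phi_{\omega^M}$, then apply the definition of $f$. The only cosmetic difference is that the paper checks the second identity and treats the first as the special case $M=d\cdot\Id$, while you check both directly.
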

\begin{proof}

The proof follows by checking that as maps from $A$ to $\C^\times$,  $\phi_{\omega}\circ (\times d) = \phi_{\omega^d}$ and $\phi_{ \omega}\circ (T) = \phi_{\omega^M}$.  We verify the second equality.  Pick any basis element $b_j$ and let $\omega^M = z = (z_1,\dots, z_n)$, then 
$$
\phi_{\omega^M}(b_j) = \phi_z(b_j) = z_j = \prod_k\omega_k^{m_{k,j}}.
$$
On the other hand 
$$
\phi_{\omega}(T (b_j)) = \phi_\omega\left(\sum_k m_{k,j}b_k\right) = \prod_k \omega_k^{m_{k,j}}.
$$
These two maps are the same, completing the proof of the lemma.  The first equality is a special case of the second where $M=d\cdot \Id$ is a multiple of the identity matrix.
\end{proof}

Thus, assuming that $\{\omega_1,\dots, \omega_n\}$ is algebraically independent we see that 
\begin{equation}\label{eqn: signature almost done}
\sigma_J(\omega_1^{k\cdot d}) = f(\omega^M) -  f(\omega^d).
\end{equation}

In order to relate equation~(\ref{eqn: signature almost done}) with claim \ref{eigencase} of the theorem, we suppose  that $\A_\delta = \dfrac{\Q[t,t^{-1}]}{at-b}$ has $\Q$-rank $1$.  Then $\A_\delta$ has rational basis $\{\eta\}$ and for some $k$, $A$ is the integral span of $\frac{\eta}{k}$.  On $\A_\delta$ $t$ acts by multiplication by $\frac{b}{a}$ so that we can take $d=a$ and and $T = a\cdot t$ acts on $A$ by multiplication by $b$.  Finally, letting $\omega = \omega_1$ reduces equation~(\ref{eqn: signature almost done}) to
$$
\sigma_J(\omega^{k\cdot a}) = f(\omega^b) -  f(\omega^{a})
$$
and we have claim \ref{eigencase} of the theorem when $\omega$ is transcendental.  A continuity argument at the end of the proof extends this equality to algebraic numbers.

It remains to deal with the case that $\rank_\Q(\A_\delta) = \rank_\Z(A)$ is greater than $1$.  The following functional analytic lemma allows us to translate equation~(\ref{eqn: signature almost done}) into a one variable result.  The proof of the lemma is technical, involving the theory of operators on the Hilbert space of square integrable functions on $\T^n$.  It is delayed until after we complete the proof of Theorem~\ref{thm: T*-1 sign}. 

%  Suppose that $\left(\begin{array}{c}1\\0\\\vdots\\0\end{array}\right)$ is not an eigenvalue for $M^q$ for any $q<p$ and is an eigenvalue of $M^p$ with eigenvalue $\lambda\in \Z$.  

 \begin{lem}\label{lem: analysis lemma}
Let $s:\T\to \R$ and $f:\T^n\to \R$ be bounded measurable functions, $d\in \Z-\{0\}$ and $M$ be an $n\times n$ integral matrix with nonzero determinant.  Suppose that for almost all $\omega\in \T^n$
 \begin{equation}\label{signEqn1}
s(\omega_1) = 
f(\omega^M) - 
f(\omega^d).
 \end{equation}
 Let $e_1=(1,0,0,\dots)^T$ be the first standard basis element of $\R^n$.
 
 \begin{enumerate}
 \item If for all $p\in \N$ it is the case that $e_1$ is not an eigenvector of $M^p$ then $s(\omega_1)=0$ almost everywhere.
 \item  Suppose that for all $q<p$  $e_1$ is not an eigenvector for $M^q$ and that $M^p e_1 = \lambda e_1$ for some $\lambda\in \Z$.  
 Then there is a function $g:\T\to \R$ such that for almost every $\omega_1\in \T$
 \begin{equation}
s(\omega_1^{d^{p-1}}) = 
g(\omega_1^\lambda) - 
g(\omega_1^{d^{p}}).
 \end{equation}
   If there is a proper algebraic subset $S$ of $\T^n$ such that $f$ is continuous on $\T^n-S$  then $g$ can be chosen to be continuous away from a finite collection of algebraic numbers.  If $f$ has only jump discontinuities, then the same is true of $g$.  
  \end{enumerate}
 \end{lem}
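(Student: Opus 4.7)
Since $s$ and $f$ are bounded on compact groups they lie in $L^2$; expand them in Fourier series $f(\omega)=\sum_{\alpha\in\Z^n}a_\alpha\omega^\alpha$ and $s(\omega_1)=\sum_{m\in\Z}b_m\omega_1^m$. Hypothesis (\ref{signEqn1}) translates, after matching coefficients on $\T^n$, to the system
\[
b_{\beta_1}\,[\beta\in\Z e_1] \;=\; a_{M^{-1}\beta}-a_{\beta/d}\qquad(\beta\in\Z^n),
\]
with the convention $a_\gamma=0$ for $\gamma\notin\Z^n$. The engine for both parts is the iterated identity
\[
f(\omega^{M^p})-f(\omega^{d^p}) \;=\; \sum_{k=0}^{p-1} s\bigl(\omega^{d^{p-1-k}M^ke_1}\bigr),\qquad p\geq 1,
\]
obtained by applying the substitution $\omega\mapsto\omega^A$ for $A=M^kd^{p-1-k}$ to the original equation and telescoping (using that $M$ commutes with $dI$).

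\textbf{Case 2.} The plan is to integrate the iterated identity over $(\omega_2,\dots,\omega_n)\in\T^{n-1}$. Because $e_1$ is not an eigenvector of $M^k$ for $1\leq k\leq p-1$, the exponent $d^{p-1-k}M^ke_1$ has a nonzero entry outside the first coordinate, so the integral of $s(\omega^{d^{p-1-k}M^ke_1})$ collapses to the constant $b_0$. The $k=0$ term integrates to $s(\omega_1^{d^{p-1}})$ unchanged. On the left, the eigenvector condition $M^pe_1=\lambda e_1$ forces the preimage of $\Z e_1$ under $M^p$ (and trivially under $d^pI$) to sit on the $\Z e_1$-axis, so integration selects only $e_1$-axis Fourier modes of $f$ and produces $\tilde h(\omega_1^\lambda)-\tilde h(\omega_1^{d^p})$, where $\tilde h(u):=\int_{\T^{n-1}}f(u,\omega_2,\dots,\omega_n)\,d\omega_2\cdots d\omega_n$. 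Finally $b_0=0$, since $\omega\mapsto\omega^M$ and $\omega\mapsto\omega^d$ are measure-preserving endomorphisms of $\T^n$, whence $\int_{\T^n} s\,d\omega=\int f(\omega^M)\,d\omega-\int f(\omega^d)\,d\omega=0$. Thus $g:=\tilde h$ witnesses $s(\omega_1^{d^{p-1}})=g(\omega_1^\lambda)-g(\omega_1^{d^p})$. For continuity, dominated convergence shows $\tilde h$ is continuous at $u_0$ whenever the vertical slice $\{u_0\}\times\T^{n-1}$ meets the exceptional algebraic set $S$ in a measure-zero subset; the set of bad $u_0$ is cut out by vanishing (in $\omega_1$) of the $\omega_2,\dots,\omega_n$-coefficients in the defining equations for $S$, hence is finite and algebraic. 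The jump-discontinuity variant is identical with one-sided limits replacing limits.

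\textbf{Case 1.} Here I must show $b_m=0$ for every $m$. The Fourier relation specializes to $b_m = a_{mv}\,[mv\in\Z^n] - a_{(m/d)e_1}\,[d\mid m]$ with $v:=M^{-1}e_1$, so both terms must vanish. I will run two chains from the $\beta\notin\Z e_1$ relations. The forward chain $\alpha_k:=(m/d^{k+1})M^ke_1$ gives $a_{(m/d)e_1}=a_{\alpha_1}=a_{\alpha_2}=\cdots$, and the backward chain, obtained by applying the relation at $\beta=d^jmw_j$ with $w_j:=M^{-j}e_1$, gives $a_{mv}=a_{dmw_2}=a_{d^2mw_3}=\cdots$. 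Under Case 1 no $M^{\pm j}e_1$ ($j\geq 1$) is a rational multiple of $e_1$, which has two consequences: no intermediate $\beta$ produced lies in $\Z e_1$ (so each application of the relation has LHS zero and is legal), and the indices generated are pairwise distinct (any coincidence would rearrange into an eigenvector equation $M^{k'-k}e_1=d^{k'-k}e_1$). At each step the chain either leaves $\Z^n$, forcing the current Fourier coefficient and hence the starting value to be zero, or continues; an infinite chain would produce infinitely many identical Fourier coefficients of $f$, contradicting $\ell^2$-summability unless the common value is zero. Either alternative yields $b_m=0$, so $s=0$ almost everywhere.

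\textbf{Main difficulty.} The technical heart is the Case 1 chain argument: one must verify both that each link is a genuine a.e.\ identity of Fourier coefficients and that the no-eigenvector hypothesis really rules out every way the chain could close or its indices coincide. A secondary issue in Case 2 is the continuity statement, where one must allow $S$ to contain finitely many full vertical slices $\{u_0\}\times\T^{n-1}$; these account precisely for the finite exceptional set of algebraic numbers where $g$ may fail to be continuous.
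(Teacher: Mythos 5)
Your proof is correct. Case~1 follows the same chain argument on Fourier coefficients that the paper uses (the two chains are the paper's explicit chain plus the ``similar argument'' for $a_{d^{-1}J}$), so there is nothing new to compare there. Case~2, however, takes a genuinely different route. The paper stays at the Fourier-coefficient level: it runs a length-$p$ chain using the non-eigenvector condition for $q<p$, closes the chain using $M^{-p}J=\lambda^{-1}J$, and then reindexes the resulting one-variable Fourier series. You instead precompose the functional equation with the monomial endomorphisms $\omega\mapsto\omega^{M^kd^{p-1-k}}$, telescope to the iterated identity $f(\omega^{M^p})-f(\omega^{d^p})=\sum_{k=0}^{p-1}s(\omega^{d^{p-1-k}M^ke_1})$, and then integrate over $\T^{n-1}$. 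The intermediate terms collapse to the mean $b_0=\int s$ (each exponent vector $d^{p-1-k}M^ke_1$ with $1\le k\le p-1$ has a nonzero off-axis entry, so $\omega'\mapsto\omega^v$ pushes Haar measure on $\T^{n-1}$ to Haar measure on $\T$), the $k=0$ term survives as $s(\omega_1^{d^{p-1}})$, the eigenvector condition forces only $e_1$-axis modes of $f$ to survive on the left, and $b_0=0$ because both $\omega\mapsto\omega^M$ and $\omega\mapsto\omega^d$ preserve Haar measure. Both proofs land on the same $g$, namely the fiber integral $g(u)=\int_{\T^{n-1}}f(u,\omega_2,\dots,\omega_n)\,d\omega'$, which the paper identifies via its ``operator lemma''; your approach makes this identification immediate rather than a separate step. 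The continuity and jump-discontinuity analysis in both cases is dominated convergence applied to this fiber integral, with the finite exceptional set coming from the irreducible components of $S$ that are vertical slices. What your route buys: the reindexing in the paper's Case~2 (substituting $\omega\mapsto\omega^{a^{1-p}}$, etc.) is absorbed into the telescoping identity, and the measure-preservation observation replaces a fair amount of coefficient bookkeeping; the cost is that you must justify that the substitutions $\omega\mapsto\omega^A$ preserve the a.e.\ equality (which they do, exactly because they are measure-preserving), a point worth stating explicitly in a final writeup.
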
 
 
 Now we complete the proof of Theorem~\ref{thm: T*-1 sign}.  Let $M$ be a matrix representative of $T=d\cdot t:H\to H$.   According to equation~(\ref{eqn: signature almost done}), 
$\sigma_J(\omega_1^{d\cdot k}) = f(\omega^M) -  f(\omega^d)$ for almost all $\omega\in \T^n$.  Recall that we chose a basis $b_1,\dots, b_n$ with $k\cdot b_1=\eta$

In the first case of the theorem $(at^p-b)\eta\neq0$ for all $a, b, p$.  It follows that for all $p\in \N$,  $b_1$ is not an eigenvector of $T^p$, and $e_1$ is not an eigenvector for $M^p$.   Case (1) of Lemma~\ref{lem: analysis lemma} applies and $\sigma_J(\omega_1)=0$ for almost all $\omega_1\in \T$.  Since $\sigma_J$ has only jump discontinuities where it is normalized to evaluate to the average of the one sided limits, it follows that $\sigma_J(\omega)=0$ for all $\omega\in \T^n$  This recovers conclusion (1) of the Theorem.

On the other hand, if $\delta(t)$ is a factor of $at^p-b$ then $a$ is a multiple of the leading coefficient of $\delta$.  Take $d=a$ so $T=a\cdot t$ and $$T^p b_1 = a^p t^p b_1 = a^p \dfrac{b}{a} b_1 = ba^{p-1}b_1.$$ Thus, $b_1$ is an eigenvector for $T^p$ with eigenvalue $\lambda = b\cdot a^{p-1}$.  By Lemma~\ref{lem: analysis lemma} there is a function $g:\T\to \R$ such that  $\sigma_J(\omega^{k\cdot a \cdot a^{p-1}}) = 
g(\omega^{b\cdot a^{p-1}}) - 
g(\omega^{a^{p}})$  almost everywhere. At the discontinuities of $g$ (which are jump discontinuities, so one-sided limits exist) we may redefine $g$ as the average of its one-sided limits.  Hence, for all $\omega\in \T$, \begin{equation}\label{before simplification}
\sigma_J(\omega^{k\cdot a^{p}}) = 
g(\omega^{b\cdot a^{p-1}}) - 
g(\omega^{a^{p}})
\end{equation}
For any $z\in \T$ let $\omega = z^{(a^{1-p})}$.  Making this substitution into  (\ref{before simplification}) we see that 
 \begin{equation}\label{eqn: desired equality}
\sigma_J(z^{k\cdot a}) = 
g(z^{b}) - 
g(z^{a})
\end{equation}
 for all $z\in \T$.    Replacing $k\cdot a\in \N$ with the symbol $k$ completes the proof of Theorem~\ref{thm: T*-1 sign}.

\end{proof}

 \begin{proof}[Proof of Lemma~\ref{lem: analysis lemma}]

Since the assumed equality $s(\omega_1) = 
f(\omega^M) - 
f(\omega^d)$ holds almost everywhere, we may regard it as an equality in the Hilbert space $L^2(\T^n)$ of square integrable functions on $\T^n$ with respect to normalized Lebesgue measure on $\T^n$, that is, the complete inner product space with inner product 
$$
\langle f, g\rangle = \frac{1}{(2\pi)^n}\Int_{\T^n} f(\omega_1,\dots,\omega_n)\overline{g(\omega_1,\dots, \omega_n)}d\omega_1\dots d\omega_n
.$$  
Using this inner product, the space $L^2(\T^n)$ has an orthogonal basis consisting of monomials.
$$
\{\omega_1^{i_1}\cdot \ldots\cdot \omega_n^{i_n} : i_1,\dots, i_n\in \Z\}.
$$
For any column vector $I=(i_1,\dots, i_n)^T\in \Z^n$, define $\omega^I := \omega_1^{i_1}\cdot \ldots \cdot \omega_n^{i_n}$.  The $T$ in the superscipt indicates the transpose.  

Notice that for $d\in \Z$, $(\omega^d)^I = \omega_1^{d\cdot i_1}\cdot\ldots\cdot \omega_n^{d\cdot i_n} = \omega^{dI}$.  Similarly, for a matrix $M$, 
$$
\begin{array}{rcll}
(\omega^M)^I
&=&
\displaystyle \left(\prod_{k=1}^n\omega_k^{m_{k,1}}, \prod_{k=1}^n\omega_k^{m_{k,2}},\dots , \prod_{k=1}^n\omega_k^{m_{k,n}}\right)^I
 &\text{by equation }(\ref{exponential matrix})
\\&=&
\displaystyle\left(\prod_{k=1}^n\omega_k^{m_{k,1}\cdot i_1}\right)\cdot  \left(\prod_{k=1}^n\omega_k^{m_{k,2}\cdot i_2}\right)\cdot \ldots \cdot \left(\prod_{k=1}^n\omega_k^{m_{k,n}\cdot i_n}\right)
\\&=&
\displaystyle\left(\omega_1^{ \underset{j=1}{\overset{n}{\sum}} m_{1,j}i_j}\right)\cdot\left(\omega_2^{\underset{j=1}{\overset{n}{\sum}}m_{2,j}i_j}\right)\cdot\ldots\cdot
\left(\omega_n^{\underset{j=1}{\overset{n}{\sum}} m_{n,j}i_j}\right)
\\&=&
\omega^{MI}.  
\end{array}
$$
  Suppose $f(\omega) = \Sum_{I\in \Z^n} a_I\cdot \omega^I$ and $s(\omega_1)=\Sum_{k\in \Z} s_k \omega_1^k$.  Since $s(\omega_1) = 
f(\omega^M) - 
f(\omega^d)$, we have that
\begin{equation}\label{difference 1}
\Sum_{k\in \Z} s_k \omega_1^k = \sum_I a_I\cdot \omega^{MI} - \sum_I a_I\cdot \omega^{dI}.
\end{equation}
Now, two sums of basis elements are the same if and only if the coefficients agree.  Thus, for all $k\in \Z$, if $I=(k,0,\dots, 0)^T$ then 
\begin{equation}\label{difference 2}
 s_k = a_{M^{-1}I} - a_{d^{-1}I}
\end{equation}
and if $I$ is not a multiple of $(1,0,\dots,0)^T$ then 
\begin{equation}\label{difference 2a}
 0 = a_{M^{-1}I} - a_{d^{-1}I}
\end{equation}
These equations hold even if some of the subscripts are in $\Q^n$ rather than $\Z^n$, as long as we make the convention that $a_J=0$ when $J\notin \Z^n$ and $s_k=0$ when $k\notin \Z$.   This is equivalent to noticing that $L^2(\T^n)\cong \ell^2(\Z^n)\into \ell^2(\Q^n)$ and instead regarding the original equality in $\ell^2(\Q^n)$.    

If $k=0$ then equation~(\ref{difference 2}) gives 
$
s_{0} = a_{0}-a_{0} = 0.
$
Let $J = (k,0,\dots 0)$ with $k\neq 0$.  Assume for all $p\in \N$ that $e_1$ is not an eigenvalue of  $M^p$ so that $M^{-p}J$ is not a multiple of  $(1,0,\dots,0)^T$.  Then 

\begin{eqnarray*}%\label{iterate}
a_{M^{-1}J} - a_{d^{-1}J} =& s_k&\text{by (\ref{difference 2}) using }I=J\\
a_{d^{}M^{-2}J} - a_{M^{-1}J} =& 0&\text{by (\ref{difference 2a}) using }I=dM^{-1}J\\
a_{d^{2}M^{-3}J} - a_{dM^{-2}J} =& 0&\text{by (\ref{difference 2a}) using }I=d^2M^{-2}J\\
\vdots\\
a_{d^{n}M^{-n-1}J} - a_{d^{n-1}M^{-n}J} =& 0&\text{by (\ref{difference 2a}) using }I=d^nM^{-n}J\\
\end{eqnarray*}
So that $a_{M^{-1}J} = a_{d^{}M^{-2}J} = \dots = a_{d^{n}M^{-n-1}I}$ for all $n\in \N$.   Let $v = a_{M^{-1}J} $ be this value.  Since $J$ is not an eigenvector for $M^p$ for any $p$, ${M^{-n}d^{n-1}J}\neq {M^{-{m}-1}d^{m}J}$ for all $n\neq m$ and we see that $|v|^2$ appears infinitely many times in the convergent sum $\Sum_{I\in \Q^n} |a_I|^2 = ||f(\omega)||^2$. Thus, $v = a_{M^{-1}J}$ must be zero.  A similar argument shows that $a_{d^{-1}J} = 0$ and equation~(\ref{difference 2}) concludes that $s_k=0$ for all $k\neq 0$.

Thus, $s=0$ in $L^2(\T)$ and hence vanishes almost everywhere.  This completes the proof of the first claim.

Now suppose that $M^p e_1 = \lambda e_1$ and $e_1$ is not an eigenvector for $M^q$ for any $q<p$.  Similarly to before, if $J=(k,0,\dots,0)^T$ with $k\neq 0$ then %we evaluate equations (\ref{difference 2}) at $I=J$, $I=dM^{-1}J$, $I=d^{2}M^{-2}J$, and so on to see
\begin{eqnarray*}
a_{M^{-1}J} - a_{d^{-1}J} =& s_k& \text{by (\ref{difference 2}) using }I=J\\\
a_{d^{}M^{-2}J} - a_{M^{-1}J} =& 0& \text{by (\ref{difference 2a}) using }I=dM^{-1}J\\
a_{d^{2}M^{-3}J} - a_{dM^{-2}J} =& 0& \text{by (\ref{difference 2a}) using }I=dM^{-2}J\\
\vdots\\
a_{d^{p-1}M^{-p}J} - a_{d^{p-2}M^{-p+1}J} =& 0& \text{by (\ref{difference 2a}) using }I=d^{p-1}M^{-p+1}J
\end{eqnarray*}
so that 
$$a_{M^{-1}J} = a_{d^{}M^{-2}J} = a_{d^{2}M^{-3}J} = \dots = a_{d^{p-1}M^{-p}J}.$$
By assumption $M^p e_1 = \lambda e_1$ so that $M^{-p} J = \lambda^{-1} J$.  Making this substitution  
$$a_{M^{-1}J} =  a_{d^{p-1}\lambda^{-1}J}$$
  Thus, if $J=(k,0,0,\dots)^T$, then equation (\ref{difference 2}) reduces to 
$$a_{d^{p-1}\lambda^{-1}J} - a_{d^{-1}J} = s_k.$$
Take $a_k$ to mean $a_{(k,0,\dots,0)^T}$.  Then
$$
s(\omega_1) = \Sum_{k\in \Z} s_k \omega_1^k = \Sum_{k\in \Q} s_k \omega_1^k = \Sum_{k\in \Q} \left(a_{d^{p-1}\lambda^{-1}k} - a_{d^{-1}k}\right)\omega_1^k.
$$
Plugging in $\omega_1^{(d^{p-1})}$ for $\omega_1$  gives an isometric embedding $L^2(\T)\into L^2(\T)$ and an isometric isomorphism $\ell^2(\Q)\overset{\cong}\to \ell^2(\Q)$.  Applying this map, we see
$$
s\left(\omega_1^{(d^{p-1})}\right) =\Sum_{k\in \Q} \left(a_{d^{p-1}\lambda^{-1}k} - a_{d^{-1}k}\right)\omega_1^{(d^{p-1})k}.
$$
Breaking up the sum on the right hand side and re-indexing gives
$$
s\left(\omega_1^{(d^{p-1})}\right) =\Sum_{k\in \Q} a_k \omega_1^{\lambda k}-\Sum_{k\in \Q} a_k \omega_1^{d^p k}.
$$
Since $f(\omega) = \Sum_{I\in \Z^n} a_I\omega^I$, $a_k=0$ if $k\notin\Z$.  Thus,
$$
s\left(\omega_1^{(d^{p-1})}\right) =\Sum_{k\in \Z} a_k \omega_1^{\lambda k}-\Sum_{k\in \Z} a_k \omega_1^{d^p k}.
$$
Finally, $g(\omega_1) = \Sum_{k\in \Z} a_k \omega_1^k$ gives the desired result.  It remains to to study continuity properties of $g$.  The following lemma expresses $g$ as an integral of $f$.  

\begin{lem}
For any $ f(\omega) = \Sum_{I\in \Z^n} a_I \omega^I\in L^2(\T^n)$
\begin{equation}\label{operator lemma}\Sum_{k\in \Z} a_{(k,0,\dots,0)} \omega_1^k = \frac{1}{(2\pi)^{n-1}}\Int_{\T^{n-1}} f(\omega_1,\omega_2,\dots\omega_n) ~ d\omega_2,\dots d\omega_n \end{equation}
\end{lem}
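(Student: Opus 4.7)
The plan is to interpret both sides of (\ref{operator lemma}) as continuous linear maps $L^2(\T^n)\to L^2(\T)$ and to verify their agreement on the dense linear span of monomials, where everything reduces to orthogonality of characters.

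First I would set up two operators. Let $P\colon L^2(\T^n)\to L^2(\T)$ be the orthogonal projection onto the closed subspace spanned by $\{\omega_1^k : k\in \Z\}$; by definition of the Fourier coefficients, $Pf = \sum_{k\in\Z} a_{(k,0,\dots,0)}\omega_1^k$, so $P$ is precisely the operator described on the left-hand side, and it is bounded since it is an orthogonal projection. Let $Q\colon L^2(\T^n)\to L^2(\T)$ be given by partial integration,
\[
(Qf)(\omega_1) = \frac{1}{(2\pi)^{n-1}}\int_{\T^{n-1}} f(\omega_1,\omega_2,\dots,\omega_n)\, d\omega_2\cdots d\omega_n.
\]
By Fubini and the Cauchy--Schwarz inequality applied pointwise in $\omega_1$, $\|Qf\|_{L^2(\T)}\le \|f\|_{L^2(\T^n)}$, so $Q$ is also a bounded linear operator.

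Next I would check $P=Q$ on each basis monomial $\omega^I$ with $I=(i_1,\dots,i_n)\in\Z^n$. By orthogonality of characters, $P(\omega^I)=\omega_1^{i_1}$ if $i_2=\cdots=i_n=0$ and $P(\omega^I)=0$ otherwise. On the other hand, by Fubini,
\[
Q(\omega^I) = \omega_1^{i_1}\prod_{j=2}^{n}\frac{1}{2\pi}\int_\T \omega_j^{i_j}\, d\omega_j,
\]
and each factor equals $1$ when $i_j=0$ and $0$ otherwise. Hence $Q(\omega^I)=P(\omega^I)$ for every $I$.

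Finally, since the linear span of the monomials $\{\omega^I\}_{I\in\Z^n}$ is dense in $L^2(\T^n)$ and both $P$ and $Q$ are bounded, $P=Q$ on all of $L^2(\T^n)$, which is exactly the asserted identity (\ref{operator lemma}). The only nontrivial point, and the one I would be most careful about, is the boundedness of $Q$, which is just the Cauchy--Schwarz (or Jensen) estimate applied to integration against the constant function $1$ on $\T^{n-1}$; once that is in hand, the rest is a routine orthogonality calculation.
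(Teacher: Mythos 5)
Your proposal is correct and follows essentially the same route as the paper: both verify the identity on the monomial basis $\{\omega^I\}$ via orthogonality of characters and then extend to all of $L^2(\T^n)$ by density and boundedness of the two linear operators. If anything you are slightly more careful than the paper, which simply asserts boundedness of both sides, whereas you justify the boundedness of the partial-integration operator $Q$ explicitly via Cauchy--Schwarz.
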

\begin{proof}
We verify the equality when $f(\omega) = \omega^I = \omega^{i_1}\cdot \ldots \omega_n^{i_n}$.  The right hand side of (\ref{operator lemma}) expands as 
$$
 \frac{1}{(2\pi)^{n-1}}\Int_{\T^{n-1}} \omega_1^{i_1}\cdot \ldots \omega_n^{i_n} ~ d\omega_2,\dots d\omega_n 
 = \omega_1^{i_1}\cdot  \left(\frac{1}{2\pi}\Int_{\T^{1}} \omega_2^{i_n}d\omega_2\right)\cdot \ldots \cdot \left(\frac{1}{2\pi}\Int_{\T^{1}} \omega_n^{i_n}d\omega_n\right)
$$ 
Recall that $\dfrac{1}{2\pi}\Int_{\T^{1}} z^{k}dz = \left\{\begin{array}{ll} 1&\text{ if }k=0\\0& \text{ if }k\neq0\end{array}\right.$.  Thus
$$
 \frac{1}{(2\pi)^{n-1}}\Int_{\T^{n-1}} \omega_1^{i_1}\cdot \ldots \omega_n^{i_n} ~ d\omega_2,\dots d\omega_n = \left\{\begin{array}{ll} \omega_1^{i_1}& \text{ if }i_2=i_3=\dots=i_n=0\\0&\text{ otherwise }\end{array}\right.
$$ 
This agrees precisely with the left hand side of (\ref{operator lemma}) for $f(\omega) = \omega^I$.  Since the rules sending $f(\omega)\in L^2(\T^n)$ to  the right and left hand side of (\ref{operator lemma}) are bounded linear operators and these operators agree on a basis for $L^2(\T^n)$,  the claim holds for all $f(\omega)\in L^2(\T^n)$.  
\end{proof}

  We now see that
  $$
  g(\omega_1) = \frac{1}{(2\pi)^{n-1}}\Int_{\T^{n-1}} f(\omega_1,\omega_2,\dots\omega_n) ~ d\omega_2,\dots d\omega_n.
  $$
  We must now check that $g$ is continuous away from a finite set of algebraic numbers, where it has jump discontinuities.  

Let $S\subseteq \T^n$ be the proper algebraic set away from which $f$ is continuous.  $S$ decomposes into irreducible components $S=X_1\cup X_2\cup\dots \cup X_n$.  By reordering the $X_k$ we assume that for $k=1,\dots,j$ there is some $z_k\in \T$ such that $X_k = \{z_k\}\times \T^{n-1}$ and that for $z\notin\{z_1,\dots z_k\}$ $S\cap (\{z\}\times \T^{n-1})$ is a proper algebraic subset of $\{z\}\times \T^{n-1}$ and so has zero $n-1$ dimensional measure.

Let $z\notin \{z_1,\dots z_k\}$  then away from a measure zero set of $\T^{n-1}$ we have that 
$$
\Lim_{\omega_1\to z} f(\omega_1,\omega_2,\dots \omega_n) = f(z,\omega_2,\dots \omega_n)
$$
Since $f$ is bounded above by some constant and constant functions on $\T^{n-1}$ are integrable, it follows from the domainated convergence theorem (See for example, \cite[Theorem 1.34]{Rudin})  that 
$$
\Lim_{\omega_1\to z} g(\omega_1) = g(z) 
$$
so that $g$ is continuous at $z$.

Suppose now that $f$ has only jump discontinuities.  It remains to show that $g$ has (at worst) a jump discontinuity at $z_j = e^{i\theta_j}$ when $X_j = \{z_j\}\times \T^{n-1}$ is an irreducible component of $S$.  We must show that the one sided limits exist at $z_j$. Since $f$ has only jump discontinuities we see that for $\epsilon\in \{+,-\}$ the one sided limit
$\Lim_{\theta\to \theta_j^{\epsilon}}f(e^{i\theta},\omega_2,\dots \omega_n)$ exists.  Using dominated convergence again we see that
$$
\begin{array}{rcl}\Lim_{\theta\to \theta_j^{\epsilon}}g(e^{i\theta}) &= &
\Lim_{\theta\to\theta_j^\epsilon} \left(\dfrac{1}{(2\pi)^{n-1}}\Int_{\T^{n-1}} f(\omega_1,\omega_2,\dots\omega_n) ~ d\omega_2,\dots d\omega_n\right)
\\
&=& 
\dfrac{1}{(2\pi)^{n-1}} \Int_{\T^{n-1}} \left(\Lim_{\theta\to\theta_j^\epsilon} f(\omega_1,\omega_2,\dots\omega_n)\right) ~ d\omega_2,\dots d\omega_n
\end{array}
$$  In particular, $\Lim_{\theta\to \theta_j^{\epsilon}}g(e^{i\theta})$ exists and $g$ has only jump discontinuities.

 \end{proof}

\subsection{Comparison with the Cooper signature condition}\label{subsect:Cooper}

As an application we recover the signature condition appearing in Cooper's thesis \cite[Corollary 3.14]{CooperThesis}.  We phrase the result using language of Gilmer and Livingston \cite{GL2}.

\begin{defn}[Definition 2 of \cite{GL2}]
Let $m,n\in \N$.  A function $f:\T\to \Z$ satisfies the Cooper $(m,n)$-signature condition if for all $p,c\in \N$ such that $(m,p)=(n,p)=(c,p)=1$, it follows that
$$
\displaystyle \sum_{\ell=1}^{r} f\left( e^{2\pi i \frac{c (n \overline{m})^\ell}{p}} \right) = 0
$$
where $\overline{m}\in \N$ is an inverse for $m$ mod $p$ and $r$ is the order of $n\cdot \overline{m}$ in the multiplicative group of units in $\Z/p$.
\end{defn}

 Cooper uses Casson-Gordon invariants to show that if $K$ is a genus one slice knot with Alexander polynomial $\Delta_K = ((m+1)t-m)(mt-(m+1))$ then on any genus one Seifert surface there is a derivative whose signature function satisfies the $(m,m+1)$-signature condition.  According to Proposition~\ref{prop:NullBordismDeriv} if $K$ is genus one  and slice (or even just $1.5$-solvable) and has nonzero Alexander polynomial $\Delta_K = ((m+1)t-m)(mt-(m+1))\neq 1$, then for any genus one Seifert surface there is a derivative $J$ such that $M_J$ has a non-degenerate $1.5$-null-bordism, $V$.  Notice that the annihilator of $\mu_J$ in $\A(V)$ has at least one of  $((m+1)t-m)$ and $(mt-(m+1))$ as a factor.  The second result of Theorem~\ref{thm: T*-1 sign} applies and and there exists a nonzero integer $k$ and a function $g$ such that 
\begin{equation*}
\sigma_J(\omega^{k}) = 
g(\omega^{m}) - 
g(\omega^{(m+1)})
\end{equation*}
We now prove that this equality implies Cooper's signature condition.

\begin{prop}\label{prop:CooperCondition1}
Let $m$ and $n$ be coprime.  Let $f$ and $g$ be functions on $\T$.  If there is an integer $k$ such that $f(\omega^k) = g(\omega^{m}) -  g(\omega^n) $ then $f$ has vanishing Cooper $(m,n)$-signature condition
\end{prop}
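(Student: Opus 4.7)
The plan is to substitute the given identity into the Cooper sum and use the congruence $m\alpha\equiv n\pmod p$ (where $\alpha=n\bar m$ in $(\Z/p)^\times$, with $\bar m$ the inverse of $m$ modulo $p$) to telescope the result. Fix $p,c\in\mathbb{N}$ satisfying $(p,m)=(p,n)=(p,c)=1$, let $r$ be the order of $\alpha$ in $(\Z/p)^\times$, and set $S=\Sum_{\ell=1}^{r}f(e^{2\pi i c\alpha^\ell/p})$; the goal is $S=0$.

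I first handle the case $\gcd(k,p)=1$, which is cleanest. Let $\bar k\in(\Z/p)^\times$ denote the inverse of $k$, and set $\omega_\ell=e^{2\pi i c\bar k\alpha^\ell/p}$, which satisfies $\omega_\ell^k=e^{2\pi i c\alpha^\ell/p}$. The hypothesis then yields
\[
 f(e^{2\pi ic\alpha^\ell/p}) \;=\; g\bigl(e^{2\pi icm\bar k\alpha^\ell/p}\bigr)-g\bigl(e^{2\pi icn\bar k\alpha^\ell/p}\bigr),
\]
so summing over $\ell$ gives $S=\Sigma_m-\Sigma_n$. Because $m\alpha\equiv n\pmod p$, we have $cm\bar k\alpha^\ell\equiv cn\bar k\alpha^{\ell-1}\pmod p$, so the $\ell$-th summand of $\Sigma_m$ equals the $(\ell-1)$-st summand of $\Sigma_n$; cyclically reindexing by $\alpha^r\equiv 1\pmod p$ then forces $\Sigma_m=\Sigma_n$ and hence $S=0$.

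The main obstacle is the case $\gcd(k,p)>1$, where $\bar k$ does not exist modulo $p$ and one cannot arrange $\omega_\ell$ to be a $p$-th root of unity. A natural attempt, taking $\omega_\ell=e^{2\pi ic\alpha^\ell/(pk)}\cdot\tau_\ell$ with $\tau_\ell\in\mu_k$ and trying to enforce $\omega_\ell^n=\omega_{\ell+1}^m$ cyclically, yields the recurrence $m\gamma_{\ell+1}\equiv n\gamma_\ell-ct\alpha^\ell\pmod k$ (where $m\alpha-n=pt$), and explicit examples show the cycle closure condition can fail.

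To bypass this I would reduce to the monomial case by Fourier-expanding $g(\omega)=\Sum_j a_j\omega^j$. Well-definedness of $f$ from the identity, together with $(m,n)=1$, restricts the nonzero coefficients $a_j$ to $j$ divisible by $k$; writing $j=kj'$ yields $f(u)=\Sum_{j'}a_{kj'}\bigl(u^{mj'}-u^{nj'}\bigr)$. For each single monomial term, the algebraic identity $nj'\equiv mj'\cdot\alpha\pmod p$ (which follows directly from $n\equiv m\alpha\pmod p$ with no coprimality requirement between $k$ and $p$) gives $u_\ell^{nj'}=u_{\ell+1}^{mj'}$ inside the exponential, so cyclic reindexing produces $\Sum_\ell u_\ell^{nj'}=\Sum_\ell u_\ell^{mj'}$ and each monomial contribution vanishes. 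Summing over $j'$ establishes $S=0$ in full generality, with the Fourier manipulations legitimate because $g$ is a bounded, piecewise-continuous signature function in the intended application.
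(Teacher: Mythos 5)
Your Fourier-analytic route is genuinely different from the paper's, which handles the case $\gcd(k,p)>1$ by a purely arithmetic device: factor $k=k_0k_1$ with $(k_0,p)=1$ and every prime of $k_1$ dividing $p$, take $\overline{m}$ and $\overline{k_0}$ to be inverses modulo $pk_1$, let $qr$ be the order of $n\overline{m}$ modulo $pk_1$, rewrite the summand using $\dfrac{c(n\overline m)^\ell}{p}=k\cdot\dfrac{\overline{k_0}\,c\,n^\ell\overline m^\ell}{pk_1}$, and telescope $qS$ exactly as you do in your easy case but modulo $pk_1$. This is a finite pointwise manipulation of the functional identity $F(kx)=G(mx)-G(nx)$, valid for arbitrary functions $f,g:\T\to\R$ as stated in the Proposition, with no integrability or regularity hypotheses.

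Your proposal has two genuine gaps. First, the claim that consistency of $f(\omega^k)=g(\omega^m)-g(\omega^n)$ ``restricts the nonzero $a_j$ to $j$ divisible by $k$'' is asserted with no argument. The claim is in fact true, but it is not an immediate consequence of well-definedness plus $(m,n)=1$: what well-definedness gives you directly is the system of relations $[m\mid s]\,a_{s/m}=[n\mid s]\,a_{s/n}$ for all $s$ with $k\nmid s$, and extracting $a_j=0$ for $k\nmid j$ from that requires a combinatorial argument. One way to see it: decompose $\Z\setminus\{0\}$ into orbits $\{j_t=m^t n^{c-t}\ell\}_{t=0}^c$ (where $(\ell,mn)=1$), note that the relation from $s_u=m^u n^{c+1-u}\ell$ links $a_{j_{u-1}}$ and $a_{j_u}$ when $k\nmid s_u$, and show that if $s_a$ and $s_{b+1}$ are both ``missing'' (divisible by $k$) then $k\mid\gcd(s_a,s_{b+1})=m^a n^{c-b}\ell$ divides every $j_t$ with $a\le t\le b$, so any orbit element $j_t$ with $k\nmid j_t$ is necessarily connected by present edges to a boundary constraint forcing it to zero. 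Without some such argument, the identity $f(u)=\sum_{j'}a_{kj'}(u^{mj'}-u^{nj'})$, on which everything afterward depends, is unjustified. Second, even once you control the coefficients, the Cooper condition asks for the exact values of $f$ at specified $p$-th roots of unity, and replacing $f$ by its Fourier series and evaluating termwise requires pointwise convergence that is not automatic for the $L^2$/piecewise-continuous class you invoke (nor available from the Proposition's hypotheses as stated, which assume only that $f$ and $g$ are functions on $\T$). The paper's argument avoids both difficulties entirely by never decomposing $g$.
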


\begin{proof}
%[Proof of \ref{prop:CooperCondition1}]

Suppose that $f(\omega^k) = g(\omega^{m}) -  g(\omega^n) $ for some function $g$ and consider any $p,c\in \N$ such that $(p,m)=(p,n) =(p,c)= 1$.  For the sake of notational convenience let $F(x) = f(e^{2\pi i x})$ and $G(x)  = g(e^{2\pi i x})$.  Then $F(k\cdot x) = G(m\cdot x)-G(n\cdot x)$. Let $\overline{m}$ be an inverse to $m$ mod $p$, $r$ be the order of $n\cdot \overline{m}$ mod $p$ and 
\begin{equation}\label{left hand side}
S=\displaystyle \sum_{\ell=1}^{r}F\left( \frac{c (n\cdot \overline{m})^\ell}{p}\right).
\end{equation}
We need to show that $S=0$.

 The integer $k$ decomposes as $k_0\cdot k_1$ where $(k_0,p)=1$ and every prime factor of $k_1$ is a prime factor of $p$.  Thus, it is also true that $(p\cdot k_1,k_0) = (p\cdot k_1,m)=(p\cdot k_1,n) = 1$.    Replacing $\overline{m}$ with a different inverse to $m$ mod $p$ will not change $S$, since $F$ is well defined on $\R/\Z$.  Thus, we may assume that $\overline{m}$ is an inverse of $m$ mod $(p\cdot k_1)$.  

Let $\overline{k_0}$ be an inverse for $k_0$ mod $p\cdot k_1$. If $r$ is the multiplicative order of $n\cdot \overline{m}$ mod p, then the order of $n\cdot \overline{m}$ mod $(p\cdot k_1)$ will be a multiple of $r$, say $q\cdot r$  (with $q\in \N$).  The value of $(n \overline{m})^\ell$ in the sum for $S$ in (\ref{left hand side}) runs over all powers of $n\cdot \overline{m}$ mod $p$.  If we instead let the index run from $1$ to $q\cdot r$ then $(n\cdot \overline{m})^\ell$ will run over all such powers $q$ times, Thus:
$$
\begin{array}{rcl}
q\cdot S &=&  
\displaystyle \sum_{\ell=1}^{q\cdot r}F\left( \frac{c (n\cdot \overline{m})^\ell}{p}\right) 
%\\&=&
=
\displaystyle \sum_{\ell=1}^{q\cdot r}F\left(k_0\cdot k_1\frac{\overline{k_0}\cdot c \cdot  n^{\ell}\cdot \overline{m}^\ell}{p\cdot k_1}\right) 
%\\&=&
=
\displaystyle \sum_{\ell=1}^{q\cdot r}F\left(k\cdot\frac{\overline{k_0}\cdot c \cdot  n^{\ell}\cdot \overline{m}^\ell}{p\cdot k_1}\right).
\end{array}
$$
The second equality above comes from multiplying and dividing by $k_1$ and then multiplying by $k_0\overline{k_0}$ which is congruent to $1$ mod $p\cdot k_1$.
By assumption, for all $x\in \R$, $F(k\cdot x) =  G(m\cdot x) - G(n\cdot x)$.  Applying this fact to the formula for $q\cdot S$ above we get
$$
\begin{array}{rcl}
q\cdot S &=&
\displaystyle \sum_{\ell=1}^{q\cdot r}G\left(m\cdot\frac{\overline{k_0}\cdot c \cdot  n^{\ell}\cdot \overline{m}^\ell}{p\cdot k_1}\right) -
\displaystyle \sum_{\ell=1}^{q\cdot r}G\left(n\cdot\frac{\overline{k_0}\cdot c \cdot  n^{\ell}\cdot \overline{m}^\ell}{p\cdot k_1}\right)
\\&=&
\displaystyle \sum_{\ell=1}^{q\cdot r}G\left(\frac{\overline{k_0}\cdot c \cdot  n^{\ell}\cdot \overline{m}^{\ell-1}}{p\cdot k_1}\right) -
\displaystyle \sum_{\ell=1}^{q\cdot r}G\left(\frac{\overline{k_0}\cdot c \cdot  n^{\ell+1}\cdot \overline{m}^\ell}{p\cdot k_1}\right)
\end{array}
$$
All but the first term of the leftmost sum and last term of the rightmost sum cancel, so that
$$
\begin{array}{rcl}
q\cdot S &=&
\displaystyle
G\left(\frac{\overline{k_0}\cdot c \cdot  n^{1}\cdot \overline{m}^{0}}{p\cdot k_1}\right) -
G\left(\frac{\overline{k_0}\cdot c \cdot  n^{q\cdot r+ 1}\cdot \overline{m}^{q\cdot r}}{p\cdot k_1}\right)\end{array}
$$
Since $q\cdot r$ is the order of $n\cdot\overline{m}$ mod $p\cdot k_1$ these terms also cancel, so that $q\cdot S=0$, and so $S=0$, proving the result.
\end{proof}

\section{A sufficient condition for a genus one algebraically slice knot to be $1.5$-solvable.}\label{sect:suff}

In this section we prove Theorem~\ref{thm:sufficiency} providing a sufficient condition for a genus one algebraically slice knot to be $1.5$-solvable.  For convenience we reiterate the result here.

\begin{thmnn}[Theorem~\ref{thm:sufficiency}]
Let $K$ be a genus one algebraically slice knot with $\Delta_K(t)= (mt-(m+1))((m+1)t-m)$, where $m\notin \{-1,0\}$.  Suppose that $K$ has a derivative knot $J$.  If  the algebraic concordance type of $J$ satisfies
$
[J]=[T_{(m,1)}]-[T_{(m+1,1)}]
$
for some knot $T$, then $K$ is $1.5$-solvable via a $1.5$-solution to which $J$ is associated.
\end{thmnn}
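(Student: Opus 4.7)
The strategy is to construct an explicit $1.5$-solution $V$ for $K$ with $J$ as its associated derivative. The construction builds a $1.5$-null-bordism $V_J$ for $M_J$ (Definition~\ref{defn:NullBordism}) and then stacks the fundamental cobordism $E$ of Lemma~\ref{lem:Efacts} on top to obtain $V$ with $\bdry V = M_K$. Lemma~\ref{lem:Efacts} combined with Definition~\ref{defn:NullBordism} implies that $V$ is a $1.5$-solution (Definition~\ref{defn:n-sol}), and the identification of the meridian of $J$ with the dual curve $\alpha_1$ in $E$ (as in Lemma~\ref{lem:Vfacts}) verifies that $J$ is the associated derivative.

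First I reduce to a cable-type derivative. The hypothesis $[J]=[T_{(m,1)}]-[T_{(m+1,1)}]$ in $\mathcal{AC}=\mathcal{C}/\mathcal{F}_{0.5}$ provides an algebraically slice knot $A$ such that $J$ is concordant to $J_0\,\#\,A$, where $J_0:=T_{(m,1)}\,\#\,(-T_{(m+1,1)})$. A $0.5$-solution $Y_A$ for $A$ together with the given concordance produce a cobordism $C$ from $M_{J_0}$ to $M_J$ whose additional Lagrangian/dual pairs lie in $\pi_1(Y_A)^{(1)}_r$ on the dual side and in $\pi_1(Y_A)^{(2)}_r$ on the Lagrangian side, because $Y_A$ is a $0.5$-solution. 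It therefore suffices to produce a non-degenerate $1.5$-null-bordism $V_0$ for $M_{J_0}$: gluing $V_0\cup C$ yields the required $V_J$.

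Next I construct $V_0$ using the standard cabling cobordism $C_T^k$ from $M_T$ to $M_{T_{(k,1)}}$, a $4$-manifold obtained by pushing a bounding surface for the $(k,1)$-cable pattern from $S^1\times D^2$ into $D^2\times D^2$; it is a single $2$-handle attachment together with standard product pieces. From $C_T^m$ and $C_T^{m+1}$, together with product and connect-sum cobordism pieces and an additional $2$-handle to kill a residual generator of $H_1$, I assemble a $4$-manifold $V_0$ with $\bdry V_0 = M_{J_0}$, $H_1(V_0)\cong\Z$, and $H_1(M_{J_0})\to H_1(V_0)$ zero.

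The main obstacle is to show that $V_0$ is actually a $1.5$-null-bordism and not merely a null-bordism. The $2$-handle cores from the cabling construction furnish a Lagrangian/dual basis for $H_2(V_0)/i_*H_2(\bdry V_0)$ which naturally lives in $\pi_1(V_0)^{(1)}_r$, and one must promote the Lagrangian member of each pair to $\pi_1(V_0)^{(2)}_r$. This promotion depends on the precise cancellation $m\cdot\mu_T-(m+1)\cdot\mu_T=-\mu_T$ in the meridian homology classes---the geometric shadow of the Alexander polynomial factorization $\Delta_K(t)=(mt-(m+1))((m+1)t-m)$. It is precisely this cable-specific cancellation that makes the hypothesis $[J]=[T_{(m,1)}]-[T_{(m+1,1)}]$ sufficient for $1.5$-solvability, where a generic algebraic condition would yield only $1$-solvability.
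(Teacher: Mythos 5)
Your plan has a genuine gap at the very first step. You propose to build a $1.5$-null-bordism $V_J$ for $M_J$ and then glue on the fundamental cobordism $E$ to obtain a $1.5$-solution for $K$. But a null-bordism is defined (Definition~\ref{defn:NullBordism}) to have $H_1(M_J)\to H_1(V_J)$ the \emph{zero} map with $H_1(V_J)\cong\Z$, and Lemma~\ref{lem:Efacts} says $H_1(M_J)\to H_1(E)$ is also zero with $H_1(E)\cong\Z$. Applying Mayer--Vietoris to $V=V_J\cup_{M_J}E$ therefore yields $H_1(V)\cong\Z^2$, so $H_1(M_K)\to H_1(V)$ cannot be an isomorphism and $V$ is not even a $1$-solution. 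The paper's Proposition~\ref{prop:raisingsolvability} gets away with the analogous gluing precisely because it glues $E$ to a $0.5$-\emph{solution} $W$ for the derivative, where $H_1(M_J)\to H_1(W)$ is an isomorphism rather than the zero map; that kills the extra $\Z$. A null-bordism is the wrong gadget here, and the confusion between ``solution'' and ``null-bordism'' is structural, not cosmetic: null-bordisms for $M_J$ are what one \emph{extracts} from a presumed $1.5$-solution for $K$ (Proposition~\ref{prop:NullBordismDeriv}), not what one feeds in to produce one.

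The missing idea, and the actual content of the paper's proof, is the reduction to an algebraically slice derivative by modifying $K$ rather than $J$. Since $J$ is in general not $0.5$-solvable, one cannot apply Proposition~\ref{prop:raisingsolvability} directly. Instead, take a dual curve $\delta$ on the genus-one Seifert surface, and perform the linked satellite operation $K\mapsto K'=K_{\delta^+,\delta^-}(T,-T)$ along the two pushoffs of $\delta$. Since $\delta^+$ and $\delta^-$ cobound an annulus in the exterior of $K$, Proposition~\ref{prop:annulus} shows $K'$ is homology concordant to $K$ and that $K'$ carries the derivative $\alpha'=\alpha_{\delta^+,\delta^-}(T,-T)$. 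Proposition~\ref{prop:AlgConcInfect} then computes $[\alpha']=[J]+[T_{(m+1,1)}]-[T_{(m,1)}]=0$, so $\alpha'$ is algebraically slice, hence $0.5$-solvable, and Proposition~\ref{prop:raisingsolvability} yields a $1.5$-solution for $K'$ with $\alpha'$ associated. Gluing on the exterior of the homology concordance transports this to a $1.5$-solution for $K$ with $J$ associated. Your proposal never touches this step. Finally, even within your framework, the ``main obstacle'' you identify---promoting the Lagrangians of the cabling cobordism into $\pi_1(V_0)^{(2)}_r$---is asserted and motivated but not carried out; that is the entire difficulty, and waving at the cancellation $m\mu_T-(m+1)\mu_T=-\mu_T$ does not substitute for a verification.
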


Our proof requires a generalization of the classical satellite operation.  One new feature is that this operation  produces links in homology spheres which might not be $S^3$.   

\begin{defn}[Knotted satellite operators] \label{defn:genSatOp}
Let $P$ be a knot or link in the homology sphere $M$ and $\eta$ be a framed curve in the complement of $P$.  Given any  knot in a homology sphere $J\subseteq Y$, one can build a new 3-manifold by cutting out a tubular neighborhood of $\eta$ and gluing back in the exterior of $J$ so that the meridian of $\eta$ is identified with the zero-framed longitude of $J$ and meridian of $J$ is identified to the framed longitude of $\eta$.  This results in a (possibly new) 3-manifold $M_{\eta}(J)$.  Since the meridian of $\eta$ is sent to the zero-framed longitude of $J$,   $M_\eta(J)$ is a homology sphere.  $P_{\eta}(J)$ is the image of $P$ in  $M_\eta(J)$.  We call $P_\eta$ a \textbf{knotted satellite operator}.
\end{defn}

If $\eta_1$ and $\eta_2$ are disjoint framed curves in the complement of $P$ and $J_1$ and $J_2$ are knots in homology spheres then one can iteratively perform this construction, generating the knot (or link) $P_{\eta_1,\eta_2}(J_1,J_2)$.  We call $P_{\eta_1,\eta_2}$ a \textbf{linked satellite operator}.

\begin{figure}[htbp]
\setlength{\unitlength}{1pt}
\begin{picture}(200,110)
\put(0,0){\includegraphics[height=110pt]{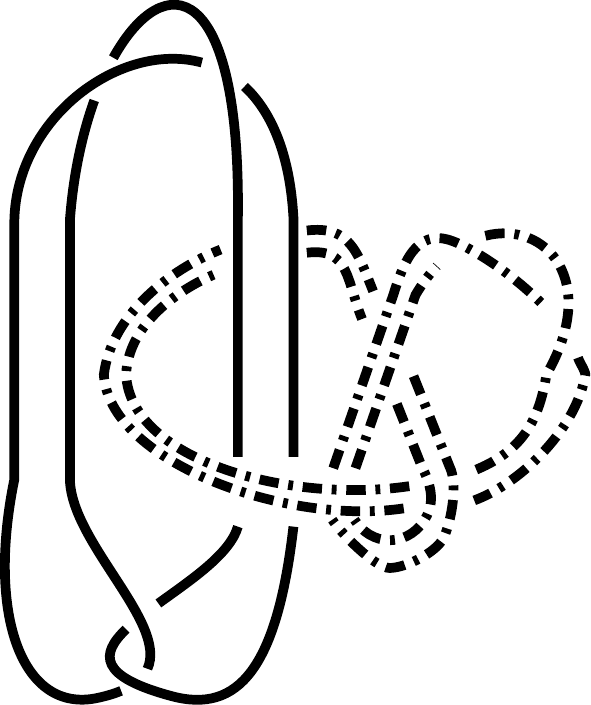}}
\put(47, 0){$P$}
%\put(-10, 50){$\eta_1$}
\put(95, 50){$\eta$}

\put(120,0){\includegraphics[height=110pt]{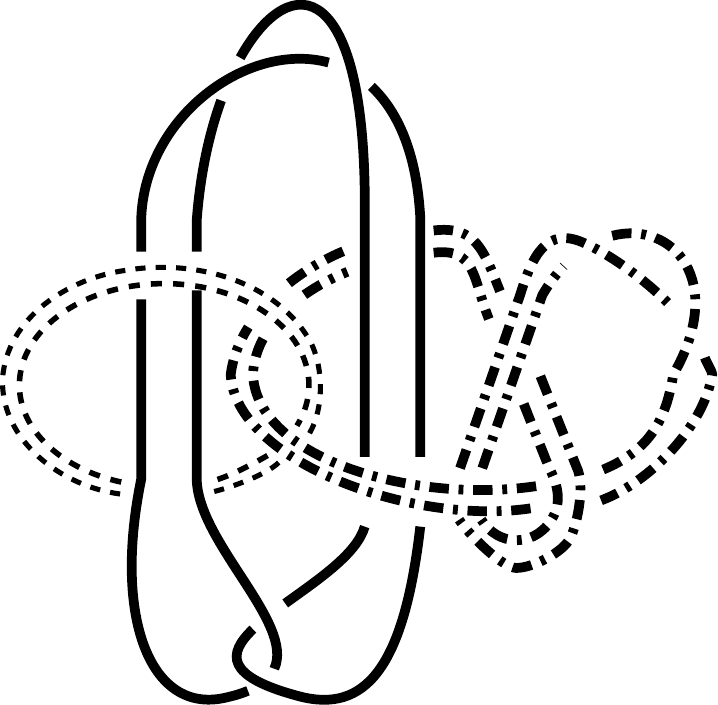}}
\put(185, 0){$P$}
\put(235, 50){$\eta_2$}
\put(125, 50){$\eta_2$}
\end{picture}
\caption{Left: An example of a knotted satellite operator $P_{\eta}$.  Right: A linked satellite operator $P_{\eta_1,\eta_2}$.  To form the resulting satellite knot $P_\eta(J)$ cut out a neighborhood of $\eta$.   Glue in the complement of $J$  by identifying the longitude of $J$ with the meridian of $\eta$ and the chosen pushoff of $\eta$ with the meridian of $J$.}\label{fig:genSatOp}
\end{figure}

If $\eta$ were unknotted in the complement of $P$ then the classical satellite $P_\eta(J)$ is given by cutting out a neighborhood of $\eta$ and gluing in the exterior of $J$ so that the meridian of $J$ is identified with the {zero-framed} longitude of $\eta$.  See for example \cite[Section 3]{CD1}.  It is clear that Definition~\ref{defn:genSatOp} specializes to the classical satellite when $\eta$ is a zero-framed unknot.  As the following lemma reveals,  the knotted satellite operation preserves linking number.

\begin{lem}\label{lem:genSatOpLinking}
Let $P_\eta$ be a knotted satellite operator where $P=P_1\sqcup P_2$ is a two component link.  For any knot $J$, let $Q =P_\eta(J)$.  Then  $\lnk(P_1,P_2) = \lnk(Q_1,Q_2)$.
\end{lem}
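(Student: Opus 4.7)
The approach is to compute both linking numbers as signed intersection numbers with Seifert surfaces, and to exhibit a Seifert surface for $Q_2$ in $M_\eta(J)$ that is obtained from a Seifert surface for $P_2$ in $M$ by a modification supported inside the satellite region. Since $Q_1=P_1$ will lie outside this region, the intersection number with $P_1$ is preserved automatically.

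First I would choose a Seifert surface $\Sigma\subset M$ for $P_2$, which exists because $M$ is a homology sphere and hence $P_2$ is null-homologous.  Put $\Sigma$ in general position with respect to $P_1$, $\eta$, and $\partial N(\eta)$.  After shrinking $N(\eta)$ if necessary, I may arrange that $N(\eta)\cap(P_1\cup P_2)=\emptyset$ and that $\Sigma\cap N(\eta)$ is a disjoint union of meridional disks of $\eta$, one per transverse intersection point of $\Sigma$ with $\eta$.  Setting $\Sigma_0:=\Sigma\setminus \operatorname{int} N(\eta)$, we have $\partial\Sigma_0=P_2\cup C$ where $C$ is a finite collection of oriented meridians of $\eta$ on $\partial N(\eta)$, and $\lnk(P_1,P_2)=\Sigma\cdot P_1=\Sigma_0\cdot P_1$.

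Next, the satellite gluing identifies each meridian of $\eta$ on $\partial N(\eta)$ with the zero-framed longitude $\lambda_J$ of $J$ on $\partial E(J)$, and $\lambda_J$ bounds a Seifert surface $F_J$ for $J$ inside $E(J)$.  For each component of $C$, I would glue in a copy of $F_J$, oriented so that its boundary matches the induced orientation from $\Sigma_0$.  The result is a compact oriented surface $\Sigma'\subset M_\eta(J)$ with $\partial\Sigma'=P_2=Q_2$, i.e.\ a Seifert surface for $Q_2$.

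Finally, since $Q_1=P_1$ lies in $M\setminus N(\eta)\subset M_\eta(J)$ and is disjoint from the newly glued-in copy of $E(J)$, it does not meet any of the capping copies of $F_J$.  Consequently $\Sigma'\cdot Q_1=\Sigma_0\cdot P_1=\lnk(P_1,P_2)$, and hence $\lnk(Q_1,Q_2)=\lnk(P_1,P_2)$.  I do not anticipate a serious obstacle; the only point requiring care is orienting the cap copies of $F_J$ consistently with the boundary orientation of $\Sigma_0$, which is a purely local computation on the gluing torus.
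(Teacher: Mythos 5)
Your proof is correct and takes essentially the same approach as the paper: cap off the punctured Seifert surface (where it is cut by $N(\eta)$) with parallel Seifert surfaces for $J$ inside the glued-in exterior $E(J)$, then observe that the other component stays disjoint from the caps. The only difference is cosmetic — you build a Seifert surface for $P_2$ and intersect with $P_1$, while the paper builds one for $P_1$ and intersects with $P_2$; by symmetry of linking number this is the same argument.
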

\begin{proof}
Suppose $(P_1, P_2, \eta)$ is a link in the homology sphere $M$.  Let $F$ be a Seifert surface for $P_1$.  The linking number $\lnk(P_1,P_2)$ is given by counting with sign the number of intersections of $F$ with $P_2$.  Suppose that $F$ intersects $\eta$ in some points $c_1,\dots c_n$.  To build a Seifert surface for $Q_1$ begin by cutting out neighborhoods of these intersections points.  This results in a surface $F_0$ bounded by $F$ together with some meridians of $\eta$.  In the complement of $Q = P_\eta(J)$, each of these bound disjoint Seifert surfaces for $J$ in $M_\eta(J)$.  Glue these surfaces to $F_0$ along these meridians for $\eta$ to get the Seifert surface $F'$ for $Q_1=(P_1)_\eta(J)$. Since the Seifert surface for $J$ does not intersect $Q_2$ the intersection between $F'$ and $Q_2$ is the same as the intersection between $F$ and $P_2$.  This completes the proof.  
\end{proof}
  
  Two knots in homology 3-spheres $R\subseteq M$ and $R'\subseteq M'$ are called \textbf{homology concordant} if there is a homology cobordism from $M$ to $M'$ in which $R$ and $R'$ cobound a smoothly embedded annulus.     The following proposition generalizes \cite[Section 3]{CD1} which produces  new slice knots into the setting of homology concordance.  

\begin{prop}\label{prop:annulus}
Let $R\subseteq M$ be a knot in a homology sphere.  Let $C$ be a concordance from $R$ to $R$ in the homology cobordism $W$ from $M$ to $M$.  Let $\eta_1,\eta_2$ be disjoint framed knots in $M-R$.  Suppose $\eta_1$ and $\eta_2$ cobound an annulus $A$ in $W-C$.  Choose a framing for $A$ and restrict it to obtain a framing on $\eta_1$ and $\eta_2$.  Then $R':=R_{\eta_1,\eta_2}(J,-J)$ is homology concordant to $R$.  

Moreover, if $F$ is a Seifert surface  for $R$ disjoint from $\eta_1$ and $\eta_2$ and $\alpha$ is a derivative for $R$ on $F$ then $R'$ has as a derivative $\alpha' = \alpha_{\eta_1,\eta_2}(K,-K)$.  If $W=M\times[0,1]$ and $C=R\times[0,1]$ then the concordance $C'$ satisfies that $\A(R')\to\A(C')\ot\A(R)$ are each isomorphisms and in $\A(C')$, $\alpha = \alpha'$.
\end{prop}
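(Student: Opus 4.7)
The plan is to construct $W'$ by a cut-and-paste operation that replaces a tubular neighborhood of $A$ with a copy of $E(J)\times[0,1]$. Using the chosen framing of $A$, identify $\nu(A)\cong A\times D^2$, so that its frontier in $\mathrm{int}(W)$ is $A\times S^1\cong S^1\times[0,1]\times S^1$ and $\nu(A)\cap\partial W=\nu(\eta_1)\sqcup\nu(\eta_2)$. Define
$$
W' := (W\setminus\mathrm{int}\,\nu(A))\cup_\Phi(E(J)\times[0,1]),
$$
where $\Phi$ identifies $\partial E(J)\times[0,1]=T^2\times[0,1]$ with $A\times S^1$ via the satellite rule $\mu_J\leftrightarrow (A\text{-direction }S^1)$, $\lambda_J^{(0)}\leftrightarrow(\text{framing }S^1)$, preserving the $[0,1]$-factor. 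The two end caps $E(J)\times\{0\}$ and $E(J)\times\{1\}$ fill in the tubes $\nu(\eta_1)$ and $\nu(\eta_2)$; the orientation reversal on the second cap forced by the $[0,1]$-factor automatically realizes the satellite by $-J$ at $\eta_2$. Since $A\subseteq W\setminus C$, the concordance $C$ is undisturbed and survives as $C'\subseteq W'$, now a concordance from $R\subseteq M$ (unchanged) to $R'=R_{\eta_1,\eta_2}(J,-J)\subseteq M_{\eta_1,\eta_2}(J,-J)$ (with $R'=R$ as a set, as $R$ avoids both $\eta_i$).

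Next I verify that $W'$ is a homology cobordism and deduce the derivative property. Applying Mayer-Vietoris to the two parallel decompositions $W=(W\setminus\nu(A))\cup_{A\times S^1}\nu(A)$ and $W'=(W\setminus\nu(A))\cup_{T^2\times I}(E(J)\times[0,1])$, the pieces $\nu(A)$ and $E(J)\times[0,1]$ are each homotopy equivalent to $S^1$ and contribute the same integral homology. Under $\Phi$ the generator $\mu_J\in H_1(E(J))$ maps to the $A$-direction generator of $H_1(\nu(A))$, and $\lambda_J^{(0)}$ is killed in $H_1(E(J))$ exactly as the framing circle is killed in $H_1(\nu(A))$; so the two Mayer-Vietoris sequences are identical, giving $H_*(W')\cong H_*(W)$ and confirming that $W'$ is a homology cobordism. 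For the derivative claim, since $F\cap(\eta_1\cup\eta_2)=\emptyset$, both $F$ and $\alpha\subseteq F$ survive unchanged in $M_{\eta_1,\eta_2}(J,-J)$, so $\alpha':=\alpha_{\eta_1,\eta_2}(J,-J)=\alpha$. By Lemma~\ref{lem:genSatOpLinking} the satellite preserves linking numbers, and since push-offs of $\alpha$ on $F$ may be chosen disjoint from the $\eta_j$, the Seifert form of $R'$ on $F$ evaluated on pairs from $\alpha$ equals that of $R$ on $F$; hence vanishing for $R$ forces vanishing for $R'$.

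The main obstacle is the final claim in the product case $W=M\times[0,1]$, $C=R\times[0,1]$. Here $W\setminus\nu(C)\simeq M\setminus\nu(R)$ and $\A(C)\cong\A(R)$. The plan is to apply Mayer-Vietoris once more, but now with $\Q[t^{\pm1}]$-coefficients twisted by the meridian of $R$ (equivalently of $R'$). The key observation is that the hypothesis $F\cap(\eta_1\cup\eta_2)=\emptyset$ forces $\lnk(\eta_i,R)=0$, so the longitude $\lambda_{\eta_i}$ is null-homologous in $M\setminus R$; combined with the (automatic) triviality of $\mu_{\eta_i}$ in $H_1(M\setminus R)$, both compositions $\pi_1(E(J)\times I)\to\Z$ and $\pi_1(\nu(A))\to\Z$ are trivial. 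Consequently both pieces pull back to trivially-twisted covers and contribute identical $\Q[t^{\pm1}]$-homology ($\Q[t^{\pm1}]$ in degrees $0$ and $1$) with matching inclusion maps from the interface, so the Mayer-Vietoris sequences for $W\setminus\nu(C)$ and $W'\setminus\nu(C')$ become isomorphic and $\A(C')\cong\A(R)$; a symmetric argument from the modified boundary gives $\A(R')\cong\A(C')$. Finally, $\alpha=\alpha'$ in $\A(C')$ is immediate, since $\alpha\subseteq F$ lies in the unmodified region $(W\setminus\nu(A))\setminus\nu(C)\subseteq W'\setminus\nu(C')$ and represents the same class whether included via $M\setminus\nu(R)$ or via $M_{\eta_1,\eta_2}(J,-J)\setminus\nu(R')$.
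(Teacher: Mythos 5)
Your construction of $W'$ by replacing $\nu(A)$ with $E(J)\times[0,1]$, and the Mayer--Vietoris arguments (both untwisted and with $\Q[t^{\pm1}]$-coefficients) establishing that $W'$ is a homology cobordism and that $\A(R')\to\A(C')\leftarrow\A(R)$ are isomorphisms, match the paper's approach, and the derivative claim via Lemma~\ref{lem:genSatOpLinking} is handled the same way. Your observation that $\lnk(\eta_i,R)=0$ follows from $F\cap\eta_i=\emptyset$, hence the coefficient systems on $\nu(A)$ and $E(J)\times I$ are trivial, is a useful justification that the paper leaves implicit.

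However, the last sentence --- that $\alpha=\alpha'$ in $\A(C')$ ``is immediate'' because $\alpha$ lies in the unmodified region --- has a genuine gap. The two curves $\alpha\times\{0\}$ and $\alpha\times\{1\}$ are distinct circles in $W'-C'$, and you need a $2$-chain in the infinite cyclic cover of $W'-C'$ cobounded by their lifts. The obvious candidate is (a lift of) the cylinder $\alpha\times[0,1]\subseteq W-C$, but that cylinder lives in $W-C$, not $W'-C'$, and may well intersect $N=\nu(A)$ in its interior even though its boundary circles avoid $\eta_1,\eta_2$; so ``both endpoints lie in the unmodified region'' does not by itself produce a chain. The paper addresses exactly this point: it observes that $\alpha\times[0,1]$ meets $N$ in meridional disks and replaces each such disk with a parallel Seifert surface for $J$ (which sits inside $E(J)\times I$ and lifts to the cover because the coefficient system is trivial there), producing a surface in $\widetilde{W'-C'}$ witnessing $\alpha=\alpha'$. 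Alternatively you could argue that your five-lemma identification $\A(C)\cong\A(C')$ is compatible with the inclusion-induced maps from $H_1$ of the $\Z$-cover of the common piece $U=(W\setminus\nu(A))\setminus\nu(C)$, and then reduce to $\alpha=\alpha'$ in $\A(C)$ (where the cylinder does exist); but either way an explicit argument is needed where you currently have none.
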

\begin{proof}
 Since all of the ideas appear in the proof of \cite[Theorem 3.1]{CD1}, we merely sketch the proof.    Let $N$ be a tubular neighborhood for $A$ and  $\Phi:S^1\times [0,1]\times B^2\to N$ be a framing for  $A$.  The restrictions $\Phi|_{S^1\times \{0\}\times B^2}$ and $\Phi|_{S^1\times \{1\}\times B^2}$ give framings for $\eta_1$ and $\eta_2$.   Let $p$ be a point in $\bdry B^2$.  One can build a new homology cobordism by starting with $W-N$ and gluing in a copy of $E(K)\times[0,1]$  so that the meridian of $K$ in $E(K)\times\{t\}$ is identified to the framed longitude of $A$, $\Phi[S^1\times \{t\}\times \{p\}]$ and so that the zero-framed longitude of $K$ is identified with the meridian of $A$, $\Phi[\{1\}\times \{t\}\times \bdry D]$.  Call the resulting manifold $W'$.  Since the meridian of $A$ generates $H_1(W-N)\cong \Z$ and is identified to the null-homologous longitude of $K$, a study of the Mayer-Veitoris sequence
$$
\dots \to H_*(\bdry N)\to H_*(W-N)\oplus H_*(E(K)\times [0,1])\to H_*(W')\to\dots
$$  reveals that $H_p(W')=0$ whenever $p>0$ so that $W'$ is a homology cobordism between its boundary components.

Since $\Phi$ restricts to the chosen framings of $\eta_1$ and $\eta_2$, $R\subseteq M\times\{1\}$ has now been replaced by $R'$.  $R\subseteq M\times\{0\}$ has not been changed in this process.  Since $N$ is disjoint from $R\times [0,1]$.  we now have a concordance (in the homology cobordism $W'$) from $R$ to $R'$.  Call this concordance $C'$.

Let $F$ be a Seifert surface for $R$ disjoint from $\eta_1$ and $\eta_2$.  Let $\alpha$ be a derivative of $R$ on $F$.  The satellite construction sends $F$ to a new surface $F'$ of the same genus bounded by $R_{\eta_1,\eta_2}(K,-K)$.  The link $\alpha$ is sent to $\alpha_{\eta_1,\eta_2}(K,-K)$.  According to Lemma~\ref{lem:genSatOpLinking} since $\alpha$ has zero linking numbers with its pushoffs, the same is true of $\alpha_{\eta_1,\eta_2}(K,-K)$.  Thus,  $\alpha_{\eta_1,\eta_2}(K,-K)$ is a derivative of $R'$.  

Suppose now that $W=M\times[0,1]$ and $C=R\times[0,1]$.  In particular then $\A(R\times\{0\})\to\A(C)\ot\A(R\times\{1\})$ are each isomorphisms.  Since $\eta_1$ and $\eta_2$ lift to the infinite cyclic cover of $M-R$, the annulus $A$ lifts to the infinite cyclic cover of $W-C$.  Notice that the infinite cyclic covers $\widetilde{W-C}$  and $\widetilde{W'-C'}$  differ by the replacement of the lifts of  $N$ with copies of $E(K)\times[0,1]$.  Since $N$ and $E(K)\times[0,1]$ have identical first homology, the same argument as was used to verify that $W'$ was a homology cobordism implies that this does not change the homology of the cover and 
$$\A(C) = H_1\left(\widetilde{W-C};\Q\right)\cong H_1\left(\widetilde{W'-C'};\Q\right) = \A(C')$$
This prove the isomorphism claim.   Notice that in $W-C=(M-R)\times[0,1]$, $\alpha\times\{0\}$ and $\alpha\times\{1\}$ cobound the annulus $\alpha\times[0,1]$ which lifts to the infinite cyclic cover of $C$.  This annulus might intersect $N$ in some number of meridonal disks for $N$.  To find a surface cobounded by $\alpha$ and $\alpha'$, replace each of these  disks by parallel Seifert surfaces for $K$, which also lift to $\widetilde{W'-C'}$.  Thus, $\alpha=\alpha'$ in $\A(C') = H_1\left(\widetilde{W'-C'};\Q\right)$.  This completes the proof.
\end{proof}

We will use this construction which preserves the concordance classes of a knot to alter derivatives.  In \cite{LiM} Livingston and Melvin compute the Blanchfield form of satellite knots.  The proposition below can be recovered as a consequence of their work.

\begin{prop}\label{prop:AlgConcInfect}
If $P$ is a knot in a homology sphere,  $P_\eta$ is a knotted doubling operator, and $\lnk(P,\eta) = c$ then for any knot $J$,  $P_\eta(J)$ is algebraically concordant to $P\# J_{(c,1)}$ 
\end{prop}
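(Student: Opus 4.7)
The plan is to exhibit an explicit Seifert surface for $P_\eta(J)$ and to verify that its Seifert form is congruent to a block-diagonal matrix realizing the Seifert form of $P\# J_{(c,1)}$, which suffices since algebraic concordance is detected by the Witt class of the Seifert form.

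First, I would fix a Seifert surface $F_P\subset M$ for $P$. Since $\lnk(P,\eta)=c$, after tubing off cancelling pairs of intersections (an operation that stabilizes $F_P$ and preserves its algebraic concordance class), I may assume $F_P\cap\eta$ is exactly $c$ transverse positive points. Choose a symplectic basis $\{a_1,\dots,a_{2g_P}\}$ of $H_1(F_P)$ realized by curves disjoint from $F_P\cap \eta$. Fix also a Seifert surface $F_J$ for $J$ with basis $\{b_1,\dots,b_{2g_J}\}$ of $H_1(F_J)$. I now construct a Seifert surface $F'$ for $P_\eta(J)$: remove open disk neighborhoods of the $c$ intersection points from $F_P$ to obtain $F_P^\circ$ with boundary $P\sqcup(c\text{ meridians of }\eta)$, and glue in $c$ disjoint parallel pushoffs $F_J^{(1)},\dots,F_J^{(c)}$ of $F_J$ sitting in a collar of $\bdry E(J)$ inside $E(J)\subset M_\eta(J)$, along their boundary longitudes of $J$ (which are the images of the meridians of $\eta$ under the satellite identification). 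A Mayer--Vietoris computation shows $H_1(F')$ is free of rank $2g_P+2cg_J$, generated by $\{a_i\}\cup\{b_j^{(k)}\}_{1\le k\le c}$, since the classes of the meridians of $\eta$ become trivial after gluing in the $F_J^{(k)}$.

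Second, I would compute the Seifert form $V'$ in this basis and argue it is block-diagonal $V'=V_P\oplus V_c$. The $a$-$a$ block equals $V_P$ because each linking number $\lnk_{M_\eta(J)}(a_i,a_j^+)$ can be computed using a Seifert surface in $M$ for $a_i$, patched near $\eta$ with copies of $F_J$; these patches lie in $E(J)$ and hence are disjoint from $a_j^+$, which sits in a neighborhood of $F_P^\circ$ outside $E(J)$. The $a$-$b$ blocks vanish: each $b_j^{(k)}$ lies on the Seifert surface $F_J^{(k)}$ and is therefore null-homologous in $E(J)$, so it bounds a surface $\Xi$ in the interior of $E(J)$; then $\lnk(a_i,(b_j^{(k)})^+)=\Xi\cdot a_i=0$ since $a_i\subset M\setminus N(\eta)\subset M_\eta(J)\setminus E(J)$.

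Third, I identify the $b$-$b$ block $V_c$ with a Seifert matrix representing $[J_{(c,1)}]$ in $\AC$. The block $V_c$ has a $c\times c$ structure of $2g_J\times 2g_J$ sub-blocks computing $\lnk(b_i^{(k)},(b_j^{(l)})^+)$; by the standard linking calculation for parallel pushoffs of a Seifert surface, the diagonal blocks equal $V_J$, while the off-diagonal blocks equal $V_J$ or $V_J^T$ according to whether the target pushoff sits above or below the source. This is precisely the Seifert matrix associated to the natural Seifert surface for $J_{(c,1)}$ built by stacking $c$ parallel copies of $F_J$ and closing them off via a single meridional band (reflecting the ``1'' in $(c,1)$), and it is this stacking pattern that produces the relation $\sigma_{J_{(c,1)}}(\omega)=\sigma_J(\omega^c)$ cited in Section \ref{sect:sign}. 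Since $P\# J_{(c,1)}$ has Seifert matrix $V_P\oplus V_{J_{(c,1)}}$, we conclude $[P_\eta(J)]=[P\# J_{(c,1)}]$ in $\AC$.

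The main technical obstacle I expect is step three, where careful bookkeeping of the satellite identifications $\mu_\eta\leftrightarrow\lambda_J$ and $\lambda_\eta^{\text{fr}}\leftrightarrow\mu_J$, together with the choice of framing of $\eta$, is needed to ensure that the off-diagonal sub-blocks of $V_c$ yield exactly the $(c,1)$-cable and not some other $(c,q)$-cable. Alternatively, one can simply appeal to the formulas of Livingston--Melvin \cite{LiM} for the Blanchfield form of a satellite and observe that the resulting form splits as the orthogonal sum of $\Bl_P$ and the Blanchfield form of $J_{(c,1)}$, bypassing the explicit Seifert matrix calculation.
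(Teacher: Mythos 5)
Your proposal is correct and takes essentially the same route as the paper: both build a Seifert surface for $P_\eta(J)$ by removing the $|c|$ disks where $F_P$ meets $\eta$ and gluing in $c$ parallel pushoffs of a Seifert surface for $J$, then compare the resulting Seifert form with that of $P\# J_{(c,1)}$; the paper likewise offers the Livingston--Melvin Blanchfield-form formula as an alternative. Your write-up supplies some of the linking-number bookkeeping (block-diagonality, vanishing of the $a$-$b$ cross terms via null-homology in $E(J)$, independence from the framing of $\eta$) that the paper's one-paragraph sketch leaves implicit.
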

\begin{proof}

Since this result is a consequence of \cite[Theorem 2]{LiM} we merely sketch a proof in terms of the Seifert form.   Pick a Seifert surface $F$ for $P$ with intersects each $\eta$ in precisely $|c|$ points.  Notice then that the resulting Seifert surface $F'$ for $P_{\eta}(J)$ differs from $F$ by replacing neighborhoods each of of these $|c|$ points with a parallel copy of a Seifert surface for $J$  (or the mirror image if $c$ negative).  

It is now straightforward to compute the Seifert form on $F'$ and find that it is the same as the Seifert form for a surface bounded by $P\#J_{(c,1)}$.

\end{proof}

The definition of the solvable filtration (Definition~\ref{defn:n-sol}) makes no reference to the knot $K$ being in $S^3$.  Thus, it makes sense to ask if a knot in a homology sphere is $n$-solvable.  Moreover the proof of \cite[Theorem 8.9]{COT} passes through entirely in this setting and we have the following result.

\begin{prop}\label{prop:raisingsolvability} Suppose that $K$ is a knot in a homology $3$-sphere that admits a  Seifert surface, $F$, on which lies a derivative of link type $J$.
If $J$ is $n.5$-solvable  then $K$ is $(n+1.5)$-solvable and $J$ is associated to that $(n+1.5)$-solution.  

\end{prop}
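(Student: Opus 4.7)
The plan is to adapt the proof of COT Theorem 8.9 (cited as Proposition \ref{prop:doublingraisessolvability}) to the homology-sphere setting. Specifically, I will construct the required $(n+1.5)$-solution as
$$V := E \cup_{M_J} W_J,$$
where $W_J$ is the given $n.5$-solution of $J$ (with $\partial W_J = M_J$, the $0$-surgery on $J$ in the ambient homology sphere $M$) and $E$ is the fundamental cobordism between $M_K$ and $M_J$ built in the paragraph preceding Lemma \ref{lem:Efacts}. That construction --- attach $g$ zero-framed $2$-handles to $M_K \times [0,1]$ along the components of the derivative, then a $3$-handle canceling the residual $S^1 \times S^2$ summand --- is purely local and goes through verbatim for $K$ in an arbitrary homology $3$-sphere; the computations of Lemma \ref{lem:Efacts} depend only on $H_\ast(M_K)$, which is the same as in the $S^3$-ambient case.

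The verification that $V$ is an $(n+1.5)$-solution then follows the strategy of \cite[Theorem 8.9]{COT}. A Mayer--Vietoris argument using Lemma \ref{lem:Efacts} gives $H_1(V) \cong H_1(M_K) \cong \Z$, and produces a basis for $H_2(V)$ consisting of the original Lagrangian--dual pairs $(L_i, D_i)$ coming from $W_J$, together with $g$ new pairs $(\widehat L_j, \widehat D_j)$ built from the derivative and its symplectic dual: $\widehat L_j$ is the closed surface obtained by gluing the core of the $j$-th $2$-handle (a disk bounded by $d_j$) to a Seifert surface for the push-off $d_j^+$ inside $W_J$, which exists because the derivative condition $\lnk(d_i, d_j^+) = 0$ forces $d_j^+$ to be null-homologous in $M_J$, while $\widehat D_j$ is built from the symplectic dual curve $b_j$ on $F$. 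The symplectic structure $d_i \cdot b_j = \delta_{ij}$ on $H_1(F)$ guarantees the correct geometric intersections. The assertion that $J$ is associated to $V$ then follows immediately from Lemma \ref{lem:Efacts}(1) and Definition \ref{rem:canonicalmap}: the meridians of $J$ correspond to the classes $\alpha_j$ in $V$, which span the kernel of $\mathcal{A}(M_K) \to \mathcal{A}(V)$.

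The technical heart of the argument, which I expect to be the main obstacle, is verifying the derived-series depth required for the Lagrangians and duals in $V$. The driving observation is a one-step shift lemma: because the meridians of $J$ generate the abelianization of $\pi_1(W_J)$ and become null-homologous in $V$, the inclusion $\pi_1(W_J) \hookrightarrow \pi_1(V)$ sends $\pi_1(W_J)^{(k)}_r$ into $\pi_1(V)^{(k+1)}_r$ for every $k$ (a brief induction using that the quotients $\pi_1(V)^{(k)}_r / \pi_1(V)^{(k+1)}_r$ are $\Z$-torsion-free). This shift immediately promotes each $W_J$-dual $D_i \in \pi_1(W_J)^{(n)}_r$ into $\pi_1(V)^{(n+1)}_r$ and each $W_J$-Lagrangian $L_i \in \pi_1(W_J)^{(n+1)}_r$ into $\pi_1(V)^{(n+2)}_r$, exactly the depths required of an $(n+1.5)$-solution. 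The delicate case is the new Lagrangian $\widehat L_j$: one must choose the bounding surface for $d_j^+$ inside $W_J$ so that its fundamental group maps into $\pi_1(W_J)^{(n+1)}_r$. Following COT, this is achieved by lifting $d_j^+$ (which already sits in $\pi_1(W_J)^{(1)}_r$) to successively deeper derived covers of $W_J$, using the $n.5$-solvability of $W_J$ to produce bounding surfaces in each cover, and finally capping in the $(n+1)$-derived cover; the shift lemma then places $\pi_1(\widehat L_j)$ in $\pi_1(V)^{(n+2)}_r$. For $\widehat D_j$, the analogous argument requires only that $b_j$ can be capped with a surface whose $\pi_1$ lies in $\pi_1(W_J)^{(n)}_r$, which is strictly easier. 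Apart from this depth argument, the remainder of the proof parallels \cite[Theorem 8.9]{COT} line by line, with $M$ in place of $S^3$.
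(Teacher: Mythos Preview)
Your approach is the paper's: set $Y=E\cup_{M_J}W_J$, use Mayer--Vietoris together with Lemma~\ref{lem:Efacts}, and invoke the one-step derived-series shift (your ``shift lemma'') to promote the $(n+1)$-Lagrangians of $W_J$ to $(n+2)$-Lagrangians of $Y$ and the $n$-duals to $(n+1)$-duals. That is exactly what the paper does, in the same few lines.

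Where you go astray is the claim that $H_2(Y)$ acquires $g$ \emph{new} Lagrangian--dual pairs $(\widehat L_j,\widehat D_j)$. It does not. From Lemma~\ref{lem:Efacts}(4) the map $H_2(M_J)\to H_2(E)$ is an isomorphism, while $H_2(M_J)\to H_2(W_J)$ is zero (any class coming from the boundary lies in the radical of the intersection form on $H_2(W_J)$, which is nondegenerate by the definition of a solution). The Mayer--Vietoris sequence then gives $H_2(Y)\cong H_2(W_J)\cong\Z^{2k}$, with the $L_i,D_i$ from $W_J$ already a full basis. Your surfaces $\widehat L_j,\widehat D_j$ are therefore not new basis elements. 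In particular, the ``delicate case'' you single out --- capping $d_j^+$ by a surface whose $\pi_1$ lies in $\pi_1(W_J)^{(n+1)}_r$ --- never arises, and the proof is shorter than you anticipate.
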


\begin{proof}
%[Proof of Proposition~\ref{prop:raisingsolvability}] 
 The idea is the same as that of the proof of \cite[Theorem 8.9]{COT}, which we merely summarize. Let $E$ be the fundamental cobordism between $M_K$ and $M_J$ constructed in Section~\ref{sect:bordism}. See also \cite[Subsection 8.1]{CHL7} and \cite[Lemma 2.5, proof of Lemma 2.3]{CHL3}.  Let $W$ be a $n.5$-solution for $M_J$. Let $Y=E\cup W$.  A study of the Mayer-Veitoris exact sequence
$$
H_*(M_J)\to H_*(E)\oplus H_*(W)\to H_*(Y)
$$
reveals that $Y$ is an $(n+1.5)$-solution.  Indeed the $(n+1)$-Lagrangians for $W$ become $(n+2)$-Lagrangians for $Y$ and the $n$-duals for $W$ become $(n+1)$-duals for $Y$.  
%
%Since $W$ is an $n.5$-solution, $H_1(M_J)\to H_1(W)$ is an isomorphism.   Hence, $H_1(E)\to H_1(W)$ is an isomorphism.  Lemma~\ref{lem:Efacts} result (2) now shows that $H_1(M_K)\to H_1(Y)$ is an isomorphism. 
%
%Continuing the Mayer-Veitoris exact sequence
%$$
%H_2(M_J)\to H_2(E)\oplus H_2(W)\to H_2(Y)\to 0
%$$
%The last term is $0$ because as we saw in the prior analysis $H_1(M_J)\to H_1(E)\oplus H_1(W)$ is injective.  By Lemma~\ref{lem:Efacts}, $H_2(M_J)\to H_2(E)$ is an isomorphism so $H_2(W)\to H_2(Y)$ is likewise an isomorphism.  Since $\{L_i,D_i\}$ forms a basis for $H_2(W)$, we see it is now a basis for $H_2(Y)$.  
%
%Back to the exact sequence:
%$$
%H_1(M_J)\to H_1(E)\oplus H_1(W)\to H_1(Y)\to 0
%$$
%Since $H_1(M_J)\to H_1(W)$ is an isomorphism and $H_1(M_J)\to H_1(E)$ is zero, we see that $H_1(W)\to H_1(Y)$ is the zero map.  Hence the image of $\pi_1(W)\to \pi_1(Y)$ sits in $\pi_1(Y)^{(1)}_r$.  Since by assumption $\pi_1(L_i)$ maps to $\pi_1(W)^{(n+1)}_r$, it follows then that $\pi_1(L_i)\to \pi_1(Y)$ sits in $(\pi_1(Y)^{(1)}_r)^{n+1}_r\subseteq (\pi_1(Y)^{(n+2)}_r)$.  Similarly the duals map to $\pi_1(Y)^{(n+1)}_r$ and $Y$ is an $(n+1.5)$-solution.

\end{proof}

Finally we have the tools needed to prove Theorem~\ref{thm:sufficiency}.

\begin{proof}[Proof of Theorem~\ref{thm:sufficiency}]
For the sake of generality, we will allow $K\subseteq M$ to be a knot in a homology $3$-sphere.  Assume $K$ has a genus one Seifert surface $F$ on which lies a derivative $\alpha$ of  knot type $J$.  Let $\Delta_K(t) = (mt-(m+1))((m+1)t-m)$.  Assume $m\neq 0, -1$ and that up to algebraic concordance $[J] = [T_{(m,1)}] - [T_{(m+1,1)}]$ for some knot $T$.  We need to show that $K$ is $1.5$-solvable.

   Let $\delta$ be a simple closed curve on $F$ which intersects $\alpha$ transversely in a single point.  See Figure~\ref{fig:example} for an example.  Let $\delta^+$ and $\delta^-$ be the positive and negative pushoffs of $\delta$.    Since $\delta$ and $\alpha$ intersect positively and transversely in a single point.  Then $\lnk(\delta^+,\alpha) = \lnk(\delta^-,\alpha) + 1$.  Let $\lambda = \lnk(\delta^-,\alpha)$ and $\ell = \lnk(\delta^+,\delta^-)$.  In terms of the basis $\{\alpha, \delta\}$ for $H_1(F)$, we see the Seifert matrix 
$A=\left[\begin{array}{cc}
0&\lambda\\\lambda+1&\ell
\end{array}\right]$. 
The Alexander polynomial of $K$ can be computed as $(\lambda t-(\lambda+1))((\lambda+1)t-\lambda)$.  Since we assumed $\Delta_K(t) = (mt-(m+1))((m+1)t-m)$, it follows that $\lambda = m$ or $\lambda=-m-1$.  We assume $\lambda = m$.  The proof when $\lambda = -m-1$ is the same.

\begin{figure}[htbp]
\setlength{\unitlength}{1pt}
\begin{picture}(100,100)
%\put(100,100){*}
\put(0,0){\includegraphics[height=100pt]{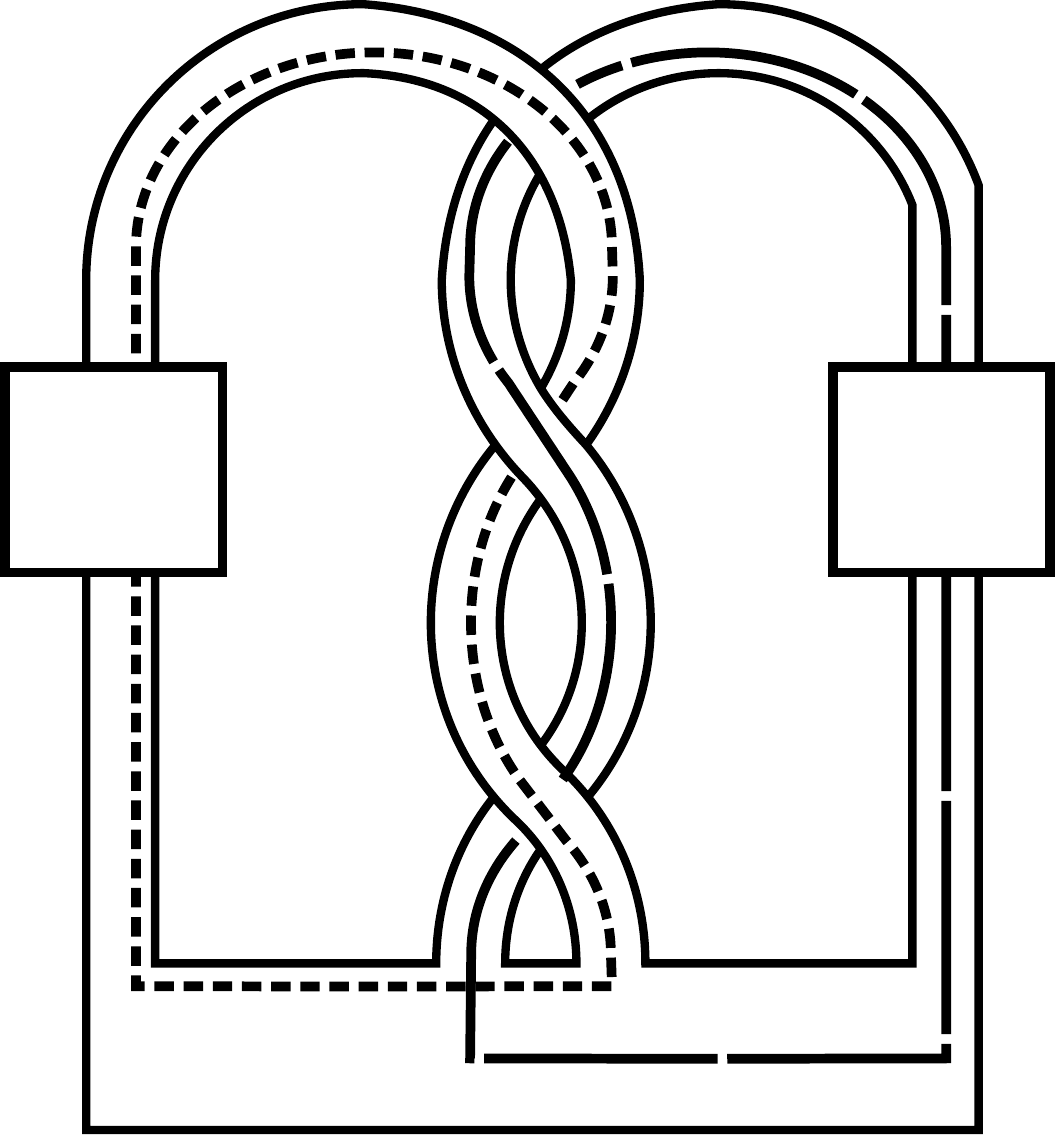}}
\put(80, 55){$J$}
\put(5, 55){$L$}
\put(20, 4){$\delta$}
\put(66, 8){$\alpha$}
\end{picture}
\caption{This genus one knot $K$ has a derivative $\alpha$ of knot type $J$.  By Proposition~\ref{prop:annulus} $K' = K_{\delta^+,\delta^-}(T,-T)$ is concordant to $K$ (in a homology cobordism).    If  $J_{\delta^+,\delta^-}(T,-T)$ is  algebraically slice then $K'$ (and so) $K$ is $1.5$-solvable.  }\label{fig:example}
\end{figure}

Now consider the the curves $\alpha$ and $\delta$ as sitting on the Seifert surface $F$.  Then $\delta^+$ and $\delta^-$ cobound an annulus in the complement of $K$.    Push this annulus into the interior of $M\times[0,1]$ pick a framing and restrict it to get a framing for $\delta^+$ and $\delta ^-$.    According to Proposition~\ref{prop:annulus} then $K$ is homology concordant to $K' = K_{\delta^+,\delta^-}(T, -T)$.  
%Since a homology 4-ball is an $n$-solution for all $n$,  $K=K_{\delta^+,\delta^-}(-T, T)$ in  $\mathcal{C}/\mathcal{F}_{n}$ for all $n$, and in particular for $n=1.5$.  

Notice, additionally, that since $\delta^+$ and $\delta^-$ are disjoint from $F$, $K_{\delta^+,\delta^-}(T, -T)$ has a genus one Seifert surface on which $J_{\delta^+,\delta^-}(T, -T)$ is a derivative.  According to Proposition~\ref{prop:AlgConcInfect}, up to algebraic concordance
\begin{align}
[J_{\delta^+,\delta^-}(T, -T)] &= [J] + [T_{(m+1,1)}] + [(-T)_{(m,1)}]\nonumber
\\&=[T_{(m,1)}]-[T_{(m+1,1)}] + [T_{(m+1,1)}] - [T_{(m,1)}]&\nonumber
\\&=0.\nonumber
\end{align}
Then $K' = K_{\delta^+,\delta^-}(T, -T)$ has an algebraically slice derivative $\alpha' = \alpha_{\delta^+,\delta^-}(-T, T)$ and so is $1.5$-solvable via a $1.5$-solution associated to $\alpha'$.  

We now build a $1.5$-solution for $K'$.  Let $W$ be the $1.5$-solution for $K'$ with associated derivative $\alpha'$.  This means $\ker(\A(K')\to \A(W))$ is generated by $\alpha'$  Let $C$ be the concordance between $K$ and $K'$ and $E(C)$ be its exterior.  Glue $W$ and $E(C)$ together along the copies of $E(K')$ contained in their boundary.  It is straightforward to see that $W$ is a $1.5$-solution. Since $\alpha = \alpha'$ in $\A(C)$ we see that $\alpha$ generates $\ker(\A(K)\to \A(W'))$ and so is an associated derivative.  This completes the proof. 
 \end{proof}

\bibliographystyle{plain}
\bibliography{mybib9, biblio}
\end{document}